\newtheorem{lemma}{Lemma}[section]
\newtheorem{theorem}[lemma]{Theorem}
\newtheorem{corollary}[lemma]{Corollary}
\newtheorem{definition}[lemma]{Definition}
\newcommand{\N}{\mathbb{N}}
\newcommand{\Z}{\mathbb{Z}}
\newcommand{\Q}{\mathbb{Q}}
\newcommand{\R}{\mathbb{R}}
\newcommand{\C}{\mathbb{C}}
\newcommand{\Bcal}{\mathcal{B}}
\newcommand{\Ccal}{\mathcal{C}}
\newcommand{\Ical}{\mathcal{I}}
\newcommand{\Kcal}{\mathcal{K}}
\newcommand{\Ncal}{\mathcal{N}}
\newcommand{\Pcal}{\mathcal{P}}
\newcommand{\Rcal}{\mathcal{R}}
\newcommand{\Scal}{\mathcal{S}}
\newcommand{\Tcal}{\mathcal{T}}
\newcommand{\Xcal}{\mathcal{X}}
\newcommand{\sym}{\mathrm{sym}}
\newcommand{\Lsym}{L_\sym}             % Symmetric operators.
\DeclareMathOperator{\tr}{tr}          % Trace of operator.
\newcommand{\avg}{*}                   % Averaging operator.
\DeclareMathOperator{\cl}{cl}          % Topological closure.
\DeclareMathOperator{\cone}{cone}      % Conic combinations.
\DeclareMathOperator{\conv}{conv}      % Convex hull.
\DeclareMathOperator{\cch}{cch}        % Closed convex hull.
\DeclareMathOperator{\psd}{PSD}        % Cone of positive operators.
\DeclareMathOperator{\nn}{NN}          % Cone of nonnegative operators.
\DeclareMathOperator{\bqc}{BQC}        % Boolean quadratic cone.
\DeclareMathOperator{\bqp}{BQP}        % Boolean quadratic polytope.
\DeclareMathOperator{\aut}{Aut}        % Automorphism group.
\DeclareMathOperator{\cayley}{Cayley}  % Cayley graph.
\newcommand{\cB}{\overline{B}}         % Closed ball.
\newcommand{\rey}{R}                   % Reynolds operator.
\newcommand{\orto}{\mathrm{O}}         % Orthogonal group.
\newcommand{\ud}{\bar{\delta}}         % Upper density.
\newcommand{\ualpha}
     {\alpha_{\bar{\delta}}}           % Independence number for upper density.
\DeclareMathOperator{\vol}{vol}        % Lebesgue measure on R^n.
\DeclareMathOperator{\sgn}{sgn}        % Sign of a number.
\newcommand{\chim}{\chi_{\mathrm{m}}}  % Measurable chromatic number.
\newcommand{\optprob}[1]{{\arraycolsep=0pt%
  \begin{array}{r@{\ }l@{\quad}l}
    #1
  \end{array}}}
\newcommand{\defi}[1]{\textit{#1}}
\title{Complete positivity and distance-avoiding sets}
\author{Evan DeCorte}
\address{E.~DeCorte, Mathematics and Statistics, McGill
  University, 805 Sherbrooke~W., Montreal,~QC, H3A~0B9, Canada.}
\email{pevdecorte@gmail.com}
\author{Fernando Mário de Oliveira Filho}
\address{F.M. de Oliveira Filho, Delft Institute of Applied
  Mathematics, Delft University of Technology, Van Mourik Broekmanweg
  6, 2628 XE Delft, The Netherlands.}
\email{fmario@gmail.com}
\author{Frank Vallentin}
\address{F.~Vallentin, Mathematisches Institut, Universit\"at zu
  K\"oln, Weyertal~86--90, 50931 K\"oln, Germany.}
\email{frank.vallentin@uni-koeln.de}
\thanks{The first author was supported by CRM Applied Math Laboratory
  and NSERC Discovery Grant 2015-0674.  Part of this research was
  carried out while the second author was at the Institute of
  Mathematics and Statistics of the University of São Paulo; the
  second author was partially supported by the São Paulo State Science
  Foundation (FAPESP) under grant 2013/03447-6. The third author was
  partially supported by the SFB/TRR 191 ``Symplectic Structures in
  Geometry, Algebra and Dynamics'', funded by the DFG}
\subjclass[2010]{46N10, 52C10, 51K99, 90C22, 90C34}
\date{5 March 2020}
\begin{document}

\begin{abstract}
  We introduce the cone of completely positive functions, a subset of
  the cone of positive-type functions, and use it to fully
  characterize maximum-density distance-avoiding sets as the optimal
  solutions of a convex optimization problem. As a consequence of this
  characterization, it is possible to reprove and improve many results
  concerning distance-avoiding sets on the sphere and in Euclidean
  space.
\end{abstract}

\maketitle
%\markboth{E. DeCorte, F.M. de Oliveira Filho, and F. Vallentin}{An
% exact completely positive formulation for the independence number}
\markboth{E. DeCorte, F.M. de Oliveira Filho, and
  F. Vallentin}{Complete positivity and distance-avoiding sets}

\setcounter{tocdepth}{1}
\tableofcontents

%%%%%%%%%%%%%%%%%%%%%%%%%%%%%%%%%%%%%%%%%%%%%%%%%%%%%%%%%%%%%%%%%%%%%%

\section{Introduction}
\label{sec:introduction}

The two prototypical geometrical problems considered in this paper
are:

\begin{enumerate}
\item[(P1)] What is the maximum surface measure~$m_0(S^{n-1})$ that a
  subset of the unit sphere
  $S^{n-1} = \{\, x \in \R^n : \|x\| = 1\,\}$ can have if it does not
  contain pairs of orthogonal vectors?
  
\item[(P2)] What is the maximum density $m_1(\R^n)$ that a subset
  of~$\R^n$ can have if it does not contain pairs of points at
  distance~$1$?
\end{enumerate}

Problem~(P1) was posed by Witsenhausen~\cite{Witsenhausen1974}. Two
antipodal open spherical caps of radius~$\pi/4$ form a subset
of~$S^{n-1}$ with no pairs of orthogonal vectors, and
Kalai~\cite[Conjecture~2.8]{Kalai2015} conjectured that this
construction is optimal, that is, that it attains $m_0(S^{n-1})$; this
conjecture remains open for all~$n \geq 2$. Problem~(P1) will be
considered in depth in~\S\ref{sec:sn-bounds}, where many upper bounds
for~$m_0(S^{n-1})$ will be improved.

Problem~(P2) figures in Moser's collection of
problems~\cite{Moser1991} and was popularized by Erd\H{o}s, who
conjectured that~$m_1(\R^2) < 1/4$ (cf.~Székely~\cite{Szekely2002});
this conjecture is still open. A long-standing conjecture of L.~Moser
(cf.~Conjecture~1 in Larman and Rogers~\cite{LarmanR1972}), related to
Erd\H{o}s's conjecture, would imply that~$m_1(\R^n) \leq 1/2^n$ for
all~$n \geq 2$. Moser's conjecture asserts that the maximum measure of
a subset of the unit ball having no pairs of points at distance~$1$ is
at most~$1/2^n$ times the measure of the unit ball; it has recently
been shown to be false~\cite{OliveiraV2019b}: the behavior of subsets
of the unit ball that avoid distance~1 resembles Kalai's double cap
conjecture. Problem~(P2) will be considered in detail
in~\S\ref{sec:rn-bounds}, where upper bounds for~$m_1(\R^n)$ will be
improved.

Bachoc, Nebe, Oliveira, and Vallentin~\cite{BachocNOV2009} proposed an
upper bound for~$m_0(S^{n-1})$ similar to the linear programming bound
of Delsarte, Goethals, and Seidel~\cite{DelsarteGS1977} for the
maximum cardinality of spherical codes. Recall that a continuous
function~$f\colon [-1,1] \to \R$ is of \defi{positive type
  for}~$S^{n-1}$ if for every finite set~$U \subseteq S^{n-1}$ the
matrix $\bigl(f(x \cdot y)\bigr)_{x,y\in U}$ is positive
semidefinite. Bachoc, Nebe, Oliveira, and Vallentin showed that the
optimal value of the infinite-dimensional optimization problem
\begin{equation}
  \label{eq:sn-problem}
  \optprob{
    \text{maximize}&\int_{S^{n-1}} \int_{S^{n-1}} f(x\cdot y)\,
    d\omega(y) d\omega(x)\\[5pt]
    &f(1) = \omega(S^{n-1})^{-1},\\
    &f(0) = 0,\\
    &\text{$f\colon [-1,1]\to \R$ is continuous and of positive type
      for~$S^{n-1}$}
  }
\end{equation}
is an upper bound for~$m_0(S^{n-1})$. Here,~$\omega$ is the
surface measure on~$S^{n-1}$.

Later, Oliveira and Vallentin~\cite{OliveiraV2010} proposed an upper
bound for~$m_1(\R^n)$ similar to the linear programming bound of Cohn
and Elkies~\cite{CohnE2003} for the maximum density of a sphere
packing in~$\R^n$; the Cohn-Elkies bound has recently been used to
solve the sphere-packing problem in dimensions~8
and~24~\cite{CohnKMRV2017, Viazovska2017}.  Recall that a continuous
function~$f\colon \R^n \to \R$ is of \defi{positive type} if for every
finite set~$U \subseteq \R^n$ the matrix
$\bigl(f(x-y)\bigr)_{x,y \in U}$ is positive semidefinite. Oliveira
and Vallentin showed that the optimal value of the
infinite-dimensional optimization problem
\begin{equation}
  \label{eq:rn-problem}
  \optprob{
    \text{maximize}&M(f)\\
    &f(0)=1,\\
    &f(x)=0\quad\text{if~$\|x\| = 1$,}\\
    &\text{$f\colon \R^n \to \R$ is continuous and of positive type}
  }
\end{equation}
is an upper bound for~$m_1(\R^n)$. Here,~$M(f)$ is the \defi{mean
  value} of~$f$, defined as
\[
  M(f) = \lim_{T \to \infty} \frac{1}{\vol [-T,T]^n} \int_{[-T,T]^n}
  f(x)\, dx.
\]

An explicit characterization of functions of positive type
for~$S^{n-1}$ is given by Schoenberg's
theorem~\cite{Schoenberg1942}. Likewise, functions of positive type
on~$\R^n$ are characterized by Bochner's
theorem~\cite[Theorem~IX.9]{ReedS1975}. Using these characterizations,
it is possible to rewrite and simplify problems~\eqref{eq:sn-problem}
and~\eqref{eq:rn-problem}, which become infinite-dimensional
linear programs. It then becomes possible to solve these
problems by computer or even analytically; in this way, one obtains
upper bounds for the geometrical parameters~$m_0(S^{n-1})$
and~$m_1(\R^n)$.  Both optimization problems above can also be
strengthened by the addition of extra constraints. The best bounds for
both geometrical parameters, in several dimensions, were obtained
through strengthenings of the optimization problems above; see
\S\S\ref{sec:sn-bounds} and~\ref{sec:rn-bounds}.

A symmetric matrix~$A \in \R^{n \times n}$ is \defi{completely
  positive} if it is a conic combination of rank-one, symmetric, and
nonnegative matrices, that is, if there are nonnegative vectors~$f_1$,
\dots,~$f_k \in \R^n$ such that
\[
  A = f_1 \otimes f_1^* + \cdots + f_k \otimes f_k^*.
\]
The set of all completely positive matrices is a closed and convex
cone of symmetric matrices that is strictly contained in the cone of
positive-semidefinite matrices. Completely positive matrices are the
main object of study in this paper.

A continuous function~$f\colon [-1, 1] \to \R$ is of
\defi{completely positive type for}~$S^{n-1}$ if for every finite
set~$U \subseteq S^{n-1}$ the matrix
$\bigl(f(x \cdot y)\bigr)_{x,y \in U}$ is completely
positive. Analogously, a continuous function~$f \colon \R^n \to \R$ is
of \defi{completely positive type} if for every~$U \subseteq \R^n$ the
matrix $\bigl(f(x - y)\bigr)_{x,y \in U}$ is completely
positive. Notice that functions of completely positive type are
functions of positive type, but not every function of positive type is
of completely positive type.

The central result of this paper is that, by considering functions of
completely positive type instead of functions of positive type, one
fully characterizes the geometrical parameters in~(P1) and~(P2).

\begin{theorem}
\label{thm:main-intro}
  If in~\eqref{eq:sn-problem} we require~$f$ to be of
  completely positive type, then the optimal value of the problem is
  exactly~$m_0(S^{n-1})$. Similarly, if in~\eqref{eq:rn-problem}
  we require~$f$ to be of completely positive type, then the optimal
  value is exactly~$m_1(\R^n)$.
\end{theorem}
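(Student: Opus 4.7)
The inequality that the two optima are at least $m_0(S^{n-1})$ and $m_1(\R^n)$, respectively, is already established: every function of completely-positive type is of positive type, so the upper bounds from Bachoc--Nebe--Oliveira--Vallentin and Oliveira--Vallentin quoted above apply, and it remains only to produce feasible functions of completely-positive type that \emph{achieve} these values.

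For the spherical problem, I would take a measurable orthogonality-avoiding set $A \subseteq S^{n-1}$ with $\omega(A) = m_0(S^{n-1})$ (or pass to a maximizing sequence) and define the normalized autocorrelation
\[
  f(x \cdot y) = \omega(A)^{-1} \int_{\orto(n)} 1_A(Tx)\, 1_A(Ty)\, dT,
\]
the integration being with respect to the normalized Haar measure on $\orto(n)$. The integrand is invariant under the diagonal action $(x, y) \mapsto (Sx, Sy)$, so $f$ depends on $(x, y)$ only through $x \cdot y$ and is well defined. I would then verify in turn: $f(1) = 1/\omega(S^{n-1})$, since $Tx$ is Haar-uniform on $S^{n-1}$; $f(0) = 0$, because orthogonality of $Tx$ and $Ty$ prevents both from lying in $A$; the objective equals $\omega(A)$ by Fubini and the $\orto(n)$-invariance of $\omega$; and for any finite $U = \{x_1, \ldots, x_k\}$ the matrix $\bigl(f(x_i \cdot x_j)\bigr)_{i,j}$ equals $\omega(A)^{-1} \int_{\orto(n)} v_T v_T^*\, dT$ with $v_T = \bigl(1_A(Tx_i)\bigr)_i \in \R^k_{\ge 0}$, which is completely positive as a limit of Riemann sums, each being a conic combination of rank-one nonnegative matrices, and the cone being closed in finite dimensions. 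Continuity of $f$, the only subtle point, I would obtain from the strong $L^2$-continuity of the regular representation of $\orto(n)$ on $L^2(S^{n-1})$ applied to the indicator $1_A$.

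For the Euclidean problem, the analogous construction is
\[
  f(x) = \delta^{-1} \cdot \Lambda_R \!\left[ \frac{1}{\vol B_R} \int_{B_R} 1_A(z)\, 1_A(z + x)\, dz \right],
\]
where $A \subseteq \R^n$ is distance-$1$-avoiding with density $\delta = m_1(\R^n)$, $B_R$ is the ball of radius $R$ centered at the origin, and $\Lambda_R$ denotes a Banach limit in $R$ (inserted to bypass possible non-existence of the autocorrelation density along balls). The checks that $f(0) = 1$, that $f(x) = 0$ for $\|x\| = 1$, that the relevant Gram matrices are completely positive (by the same decomposition $v_z v_z^*$ with $v_z = (1_A(z + x_i))_i$), and that $M(f) = \delta$ by a Fubini-type swap all parallel the spherical case. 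The principal obstacle I anticipate is continuity of $f$: on the sphere compactness of $\orto(n)$ supplied it for free, whereas on $\R^n$ the Banach-limit construction yields only a bounded measurable function a priori, and one must regularize $A$ (for instance by convolution with a smooth nonnegative mollifier) and then pass to a limit in a topology on the cone of completely-positive-type functions that is both closed and fine enough to preserve the constraints $f(0) = 1$ and $f \equiv 0$ on the unit sphere. This regularization-and-limit step is where I expect the principal technical work to lie.
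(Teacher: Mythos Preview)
Your proposal addresses only the easy direction and leaves the hard direction untouched. The reasoning in your first sentence is inverted: from ``every function of completely-positive type is of positive type'' you get that the completely-positive feasible set is \emph{smaller}, hence its maximum is at most the positive-type maximum; combined with the BNOV/OV statement that the positive-type maximum is at least $m_0(S^{n-1})$ (resp.\ $m_1(\R^n)$), this says nothing about how the completely-positive maximum compares to $m_0(S^{n-1})$. What your autocorrelation construction actually establishes is $\text{opt}(\text{cp}) \geq m_0(S^{n-1})$: you exhibit a feasible completely-positive function with objective value $\omega(A)$. This is correct, and it is essentially what the paper does in Theorem~\ref{thm:cp-upper-bound} (and, in the averaged form you write down, in the proof of Theorem~\ref{thm:bqc-exactness}). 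But it is the same inequality you claimed was ``already established,'' not the remaining one.

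What is entirely missing is the reverse inequality $\text{opt}(\text{cp}) \leq m_0(S^{n-1})$: you must show that \emph{every} feasible function of completely-positive type has objective value at most $m_0(S^{n-1})$, equivalently that from any such $f$ one can extract a measurable independent set of measure at least the objective. This is the substance of the paper. For the sphere (Lemma~\ref{lem:ind-set-from-solution}) one works with the kernel $A(x,y) = f(x\cdot y)$, applies Choquet's theorem to write $A$ as a barycenter over rank-one kernels $g \otimes g^*$ with $g \geq 0$ and $\|g\|=1$, uses the Reynolds operator to show that for almost every such $g$ the symmetrized kernel $\rey(g\otimes g^*)$ vanishes on all edges, and then uses a Lebesgue-density-point argument on the acting group to show that the density points of the support of $g$ form an independent set of measure at least $\langle J, A\rangle$. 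The Euclidean case is deduced from this by a limit over tori (\S\ref{sec:euclid}). Exhibiting one good feasible function, however carefully, cannot substitute for this step.
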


The significance of this result is twofold.

First, it gives us a source of constraints that can be added
to~\eqref{eq:sn-problem} or~\eqref{eq:rn-problem} and asserts that
this source is complete, that is, that the constraints are sufficient
for us to obtain the exact parameters. Namely, for every finite
set~$U \subseteq S^{n-1}$ we can add to~\eqref{eq:sn-problem} the
constraint that $\bigl(f(x \cdot y)\bigr)_{x, y \in U}$ has to be
completely positive, and similarly for~\eqref{eq:rn-problem}. All
strengthenings of problems~\eqref{eq:sn-problem}
and~\eqref{eq:rn-problem} considered so far in the literature have
used such constraints. In this paper, by systematically using them, we
are able to improve many of the known upper bounds for $m_0(S^{n-1})$
and $m_1(\R^n)$; see Table~\ref{tab:sn-bounds}
in~\S\ref{sec:sn-bounds} and Table~\ref{tab:rn-bounds}
in~\S\ref{sec:rn-bounds}.

Second, the characterizations of~$m_0(S^{n-1})$ and~$m_1(\R^n)$ in
terms of convex optimization problems, even computationally difficult
ones, is good enough to allow us to derive some interesting
theoretical results through analytical methods. For instance, denote
by~$m_{d_1, \dots, d_N}(\R^n)$ the maximum density that a
Lebesgue-measurable set~$I \subseteq \R^n$ can have if it is such that
$\|x-y\| \notin \{d_1, \ldots, d_N\}$ for all distinct~$x$, $y \in I$.
Bukh~\cite{Bukh2008} showed, unifying results by Furstenberg,
Katznelson, and Weiss~\cite{FurstenbergKW1990},
Bourgain~\cite{Bourgain1986}, Falconer~\cite{Falconer1986}, and
Falconer and Marstrand~\cite{FalconerM1986}, that, as the
distances~$d_1$, \dots,~$d_N$ space out, so does
$m_{d_1, \ldots, d_N}(\R^n)$ approach $(m_1(\R^n))^N$. This precise
asymptotic result can be recovered from~\eqref{eq:rn-problem} by using
functions of completely positive type in a systematic way that can
provide precise analytic results. Another result of Bukh (ibid.)  that
can be proved using this approach is the Turing-machine computability
of~$m_1(\R^n)$. Using our convex formulation one can in principle
extend this computability result to distance-avoiding sets in other
geometric spaces.

%=====================================================================

\subsection{Outline of the paper}

The main theorem proved in this paper is
Theorem~\ref{thm:cp-exactness}, from which
Theorem~\ref{thm:main-intro} follows. Theorem~\ref{thm:cp-exactness}
is stated in terms of graphs on topological spaces and is much more
general than Theorem~\ref{thm:main-intro}. It has a rather technical
statement, but it is in fact a natural extension of a well-known
result in combinatorial optimization, namely that the independence
number of a graph is the optimal value of a convex optimization
problem over the cone of completely positive matrices. This connection
is the main thread of this paper; it will be clarified
in~\S\ref{sec:conic}.

In~\S\ref{sec:locally-independent} we will see how geometrical
parameters such as~$m_0(S^{n-1})$ and~$m_1(\R^n)$ can be modeled as
the independence number of certain graphs defined over topological
spaces such as the sphere. In~\S\ref{sec:conic} this will allow us to
extend the completely positive formulation for the independence number
from finite graphs to these topological graphs; this extension will
rely on the introduction of the cone of completely positive operators
on a Hilbert space. A study of these operators, carried out
in~\S\ref{sec:cp-cop}, will then allow us to prove
Theorem~\ref{thm:cp-exactness} in~\S\ref{sec:exactness-theorem} and
extend it from compact spaces to~$\R^n$ in~\S\ref{sec:euclid}.
In~\S\S\ref{sec:binary-quadratic},~\ref{sec:sn-bounds},
and~\ref{sec:rn-bounds} we will see how to use
Theorem~\ref{thm:cp-exactness} to obtain better bounds
for~$m_0(S^{n-1})$ and~$m_1(\R^n)$; these sections will be focused on
computational techniques. We close in~\S\ref{sec:multiple-avoid} by
seeing how Theorem~\ref{thm:cp-exactness} can be used to prove Bukh's
results~\cite{Bukh2008} concerning sets avoiding many distances and
the computability of~$m_1(\R^n)$.

%=====================================================================

\subsection{Notation}
\label{sec:notation}

All graphs considered have no loops nor parallel edges. Often, the
edge set of a graph~$G = (V, E)$ is also seen as a symmetric subset
of~$V \times V$. In this case,~$x$, $y \in V$ are adjacent if and only
if $(x, y)$, $(y, x) \in E$. A graph~$G = (V, E)$ is a
\defi{topological graph} if~$V$ is a topological space; topological
properties of~$E$ (e.g., closedness, compactness) always refer to~$E$
as a subset of~$V \times V$.

If~$V$ is a metric space with metric~$d$, then for~$x \in V$
and~$\delta > 0$ we denote by
\[
B(x, \delta) = \{\, y \in V : d(y, x) < \delta\,\}
\]
the open ball with center~$x$ and radius~$\delta$. The topological
closure of a set~$X$ is denoted by~$\cl X$. The term ``neighborhood''
always means ``open neighborhood'', though the distinction is never
really relevant.

The Euclidean inner product on~$\R^n$ is denoted by
$x\cdot y = x_1 y_1 + \cdots + x_n y_n$ for~$x$, $y \in \R^n$. The
$(n-1)$-dimensional unit sphere is
$S^{n-1} = \{\, x \in \R^n : \|x\| = 1\, \}$.

All functions considered are real valued unless otherwise
noted. If~$V$ is a measure space with measure~$\omega$, then the inner
product of~$f$, $g \in L^2(V)$ is
\[
(f, g) = \int_V f(x) g(x)\, d\omega(x).
\]
The inner product of kernels~$A$, $B \in L^2(V \times V)$ is
\[
\langle A, B \rangle = \int_V \int_V A(x, y) B(x, y)\, d\omega(y)
d\omega(x).
\]
When~$V$ is finite and~$\omega$ is the counting measure,
then~$\langle A, B\rangle$ is the trace inner product.
If~$f \in L^2(V)$, then~$f \otimes f^*$ denotes the
kernel~$(x, y) \mapsto f(x) f(y)$.

Denote by~$\Lsym^2(V \times V)$ the space of all kernels that are
symmetric, that is, self adjoint as operators. Note
that~$A \in \Lsym^2(V \times V)$ if and only
if~$A \in L^2(V \times V)$ and $A(x, y) = A(y, x)$ almost
everywhere. A symmetric kernel~$A$ is \defi{positive} if for
all~$f \in L^2(V)$ we have
\[
\int_V \int_V A(x, y) f(x) f(y)\, dydx \geq 0.
\]

%%%%%%%%%%%%%%%%%%%%%%%%%%%%%%%%%%%%%%%%%%%%%%%%%%%%%%%%%%%%%%%%%%%%%%

\section{Locally independent graphs}
\label{sec:locally-independent}

Let~$G = (V, E)$ be a graph (without loops and parallel edges). A
set~$I \subseteq V$ is \defi{independent} if it does not contain pairs
of adjacent vertices, that is, if for all~$x$, $y \in I$ we
have~$(x, y) \notin E$. The \defi{independence number} of~$G$, denoted
by~$\alpha(G)$, is the maximum cardinality of an independent set
in~$G$. The problem of computing the independence number of a finite
graph figures, as the complementary maximum-clique problem, in Karp's
original list of~21 NP-hard problems~\cite{Karp1972}.

To model the geometrical parameters~$m_0(S^{n-1})$ and~$m_1(\R^n)$ as
the independence number of some graph, we will have to extend the
concept of independence number from finite to infinite graphs. Then
the nature of both the vertex and edge sets plays a
role; this can be best seen considering a few examples.

Let~$V$ be a metric space with metric~$d$ and
take~$D \subseteq (0, \infty)$. The \defi{$D$-distance graph} on~$V$
is the graph~$G(V, D)$ whose vertex set is~$V$ and in which vertices
$x$,~$y$ are adjacent if~$d(x, y) \in D$. Independent sets
in~$G(V, D)$ are sometimes called \defi{$D$-avoiding sets}. Let us
consider a few concrete choices for~$V$ and~$D$, corresponding to
central problems in discrete geometry. \bigbreak

\noindent
(i) \textsl{The kissing number problem: $V = S^{n-1}$
  and~$D = (0, \pi / 3)$.} Here we consider the metric
$d(x, y) = \arccos x\cdot y$. In this case, all independent sets
in~$G(V, D)$ are finite; even more, the independence number is
finite. The independent sets in~$G(V, D)$ are exactly the contact
points of kissing configurations in~$\R^n$, so~$\alpha(G(V, D))$ is
the kissing number of~$\R^n$.
\bigbreak

\noindent
(ii) \textsl{Witsenhausen's problem (P1): $V = S^{n-1}$
  and~$D = \{\pi/2\}$.}  Again we consider the metric
$d(x, y) = \arccos x\cdot y$. An independent set in~$G(V, D)$ is a set
without pairs of orthogonal vectors. These sets can be infinite and
even have positive surface measure, so~$\alpha(G(V, D)) = \infty$.
The right concept in this case is the \defi{measurable independence
  number}
\[
\alpha_\omega(G(V, D)) = \sup\{\, \omega(I) : \text{$I \subseteq V$ is
  measurable and independent}\,\},
\]
where~$\omega$ is the surface measure on the
sphere. Then~$\alpha_\omega(G(V, D)) = m_0(S^{n-1})$.
\bigbreak

\noindent
(iii) \textsl{The sphere-packing problem: $V = \R^n$
  and~$D = (0, 1)$.}  Here we consider the Euclidean metric. The
independent sets in~$G(V, D)$ are the sets of centers of spheres in a
packing of spheres of radius~$1/2$ in~$\R^n$. So independent sets
in~$G(V, D)$ can be infinite but are always discrete,
hence~$\alpha(G(V, D)) = \infty$ while independent sets always have
Lebesgue measure~$0$. A better definition of independence number in
this case would be the center density of the corresponding packing,
that is, the average number of points per unit volume.
\bigbreak

\noindent
(iv) \textsl{Measurable one-avoiding sets (P2): $V = \R^n$
  and~$D = \{1\}$.}  In this case, $G(V, D)$ is called the
unit-distance graph of~$\R^n$. Independent sets in this graph can be
infinite and even have infinite Lebesgue measure,
hence~$\alpha(G(V, D)) = \infty$. So the right notion of independence
number is the density of a set, informally the fraction of space it
covers. We will formally define the independence density
$\ualpha(G(V , D)) = m_1(\R^n)$ in~\S\ref{sec:euclid}.  \bigbreak

In the first two examples above, the vertex set is compact. In~(i),
there is~$\delta > 0$ such that~$(0, \delta) \subseteq D$. Then every
point has a neighborhood that is a clique (that is, a set of pairwise
adjacent vertices), and this implies that all independent sets are
discrete and hence finite, given the compactness of~$V$. In~(ii),~0 is
isolated from~$D$. Then every point has an independent neighborhood
and there are independent sets of positive measure.

In the last two examples, the vertex set is not compact. In~(iii),
again there is~$\delta > 0$ such that~$(0, \delta) \subseteq D$, and
this implies that all independent sets are discrete, though since~$V$
is not compact they can be infinite. In~(iv),~0 is again isolated
from~$D$, hence there are independent sets of positive measure and
even infinite measure, given that~$V$ is not compact.

We have therefore two things at play. First, compactness of the vertex
set. Second, the nature of the edge set, which in the examples above
depends on~0 being isolated from~$D$ or not.

In this paper, the focus rests on graphs with compact vertex sets,
though the not compact case of~$\R^n$ can be handled by seeing~$\R^n$
as a limit of tori (see~\S\ref{sec:euclid} below). As for the edge
set, we consider graphs like the ones in examples~(ii) and~(iv).

The graphs in examples~(i) and~(iii) are \defi{topological packing
  graphs}, a concept introduced by de Laat and
Vallentin~\cite{LaatV2015}. These are topological graphs in which
every finite clique is a subset of an open clique. In particular,
every vertex has a neighborhood that is a clique. Here and in the
remainder of the paper we consider locally independent graphs, which
are in a sense the complements of topological packing graphs.

\begin{definition}
  A topological graph is \emph{locally independent} if every compact
  independent set in it is a subset of an open independent set.
\end{definition}

In particular, every vertex of a locally independent graph has an
independent neighborhood. The graphs in examples~(ii) and~(iv) are
locally independent, as follows from the following theorem.

\begin{theorem}
\label{thm:metrizable-li}
If~$G = (V, E)$ is a topological graph, if~$V$ is metrizable, and
if~$E$ is closed, then~$G$ is locally independent.
\end{theorem}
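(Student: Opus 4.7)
The plan is to exploit the metric structure of $V$ together with the compactness of $K\times K$ to find a uniform ``buffer zone'' around $K$ inside of which no edges of $G$ can lie.

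First I would observe that if $K\subseteq V$ is a compact independent set, then $(K\times K)\cap E=\emptyset$: edges between distinct points of $K$ are forbidden by independence, and loops are excluded by the standing assumption that graphs have no loops. Fix a metric $d$ compatible with the topology of $V$, and endow $V\times V$ with the product metric
\[
  \rho\bigl((x_1,y_1),(x_2,y_2)\bigr)=\max\{d(x_1,x_2),d(y_1,y_2)\}.
\]
Then $V\times V$ is metrizable and $K\times K$ is compact in it, while $E$ is closed and disjoint from $K\times K$.

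Next I would invoke the continuous function $\varphi\colon V\times V\to\R$ defined by $\varphi(x,y)=\rho\bigl((x,y),E\bigr)$. Since $E$ is closed, $\varphi>0$ on the complement of $E$, so in particular $\varphi$ is strictly positive on $K\times K$. By compactness, $\varphi$ attains a positive minimum $\epsilon>0$ on $K\times K$. Hence every point of $V\times V$ within $\rho$-distance $\epsilon/2$ of $K\times K$ still lies outside $E$, by the triangle inequality.

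Finally I would set
\[
  U=\{\,x\in V:d(x,K)<\epsilon/4\,\},
\]
which is open and contains $K$. For any $x,y\in U$ I can pick $x_0,y_0\in K$ with $d(x,x_0),d(y,y_0)<\epsilon/4$, giving $\rho\bigl((x,y),(x_0,y_0)\bigr)<\epsilon/4<\epsilon/2$; therefore $(x,y)\notin E$. Thus $U$ is an open independent set containing $K$, which is exactly the definition of local independence. The only real subtlety is making sure the neighborhood of $K$ in $V$ translates into a neighborhood of $K\times K$ in $V\times V$, which the product metric above makes automatic; beyond this there is no significant obstacle.
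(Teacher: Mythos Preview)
Your proof is correct and follows essentially the same approach as the paper's: both use the max product metric on $V\times V$, the distance-to-$E$ function, and compactness of $K\times K$ to extract a uniform positive radius for an open ball neighborhood of $K$. The only cosmetic difference is that the paper takes the minimum distance $\delta$ itself as the radius (noting $\rho((x',y'),(x,y))<\delta\le d_E(x,y)$ forces $(x',y')\notin E$), whereas you use $\epsilon/4$; both choices work.
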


\begin{proof}
Let~$d$ be a metric that induces the topology on~$V$. For~$V \times V$
we consider the metric
\[
d((x, y), (x', y')) = \max\{d(x, x'), d(y, y')\}
\]
which induces on~$V \times V$ the product topology.

Consider the function~$d_E\colon V \times V \to \R$ such that
\[
d_E(x, y) = d((x, y), E) = \inf\{\, d((x, y), (x', y')) : (x', y') \in
E\,\};
\]
this is a continuous function.

%
% Warning: Frank says we need to say x != y. I disagree. Check it.
%
Let~$I \subseteq V$ be a nonempty and compact independent
set. Since~$I \times I$ is compact, the function~$d_E$ has a
minimum~$\delta$ over~$I \times I$. Note~$\delta > 0$. Indeed,
since~$I \times I$ is compact, there is~$(x, y) \in I \times I$ such
that~$d((x, y), E) = \delta$. Since~$I$ is independent,
$(x, y) \notin E$. But then from the closedness of~$E$ there
is~$\epsilon > 0$ such that
$E \cap (B(x, \epsilon) \times B(y, \epsilon)) = \emptyset$,
whence~$\delta > 0$.

Next take the set
\[
S = \bigcup_{x \in I} B(x, \delta).
\]
This is an open set that contains~$I$; it is moreover
independent. Indeed, suppose~$x'$, $y' \in S$ are adjacent. Take~$x$,
$y \in I$ such that~$x' \in B(x, \delta)$ and~$y' \in B(y,
\delta)$. Then
\[
d((x, y), (x', y')) = \max\{d(x, x'), d(y, y')\} < \delta,
\]
a contradiction since~$(x', y') \in E$, $x$, $y \in I$,
and~$d_E(x, y) \geq \delta$.
\end{proof}

Let~$G = (V, E)$ be a topological graph and~$\omega$ be a Borel
measure on~$V$. The \defi{independence number} of~$G$ with respect to
the measure~$\omega$ is
\[
\alpha_\omega(G) = \sup\{\, \omega(I) : \text{$I \subseteq V$ is
  measurable and independent}\,\};
\]
when speaking of the independence number of a graph, the measure
considered will always be clear from the context. The following
theorem is a converse of sorts to Theorem~\ref{thm:metrizable-li}.

\begin{theorem}
\label{thm:closed-edge-set}
If~$G = (V, E)$ is locally independent, then so is~$G' = (V, \cl
E)$. Moreover, if~$\omega$ is an inner-regular Borel measure on~$V$,
then~$\alpha_\omega(G') = \alpha_\omega(G)$.
\end{theorem}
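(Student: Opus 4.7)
The plan is to derive both statements from a single key observation: \emph{if $U \subseteq V$ is open and independent in $G$, then $U$ is independent in $G' = (V, \cl E)$ as well.} To prove this, suppose $(x,y) \in (U \times U) \cap \cl E$ and pick a net $(x_k, y_k)$ in $E$ converging to $(x,y)$. Since $U \times U$ is open in the product topology, eventually $(x_k, y_k) \in U \times U$, contradicting $(U \times U) \cap E = \emptyset$. Note that the argument also covers $x = y$: applied to any $G$-independent open neighborhood of $x \in V$ (which exists because local independence of $G$ applies to the compact independent singleton $\{x\}$), it shows that $(x,x) \notin \cl E$, so we need not worry about $\cl E$ carrying spurious loops.

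For the first assertion, let $I$ be compact and independent in $G'$. Since $E \subseteq \cl E$, the set $I$ is automatically independent in $G$, and local independence of $G$ yields an open $G$-independent set $S$ with $I \subseteq S$. The key observation upgrades $S$ to a $G'$-independent open set, proving that $G'$ is locally independent.

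For the independence-number equality, the inequality $\alpha_\omega(G') \leq \alpha_\omega(G)$ is immediate since $E \subseteq \cl E$ forces every $G'$-independent set to be $G$-independent. For the reverse inequality, take any measurable $G$-independent $I$ and any $\epsilon > 0$; by inner regularity of $\omega$ there is a compact $K \subseteq I$ with $\omega(K) \geq \omega(I) - \epsilon$. Now $K$ is compact and $G$-independent, so local independence of $G$ produces an open $G$-independent $S$ with $K \subseteq S$; the key observation shows that $S$, and hence $K$, is $G'$-independent. Thus $\alpha_\omega(G') \geq \omega(K) \geq \omega(I) - \epsilon$, and letting $\epsilon \to 0$ and taking supremum over $I$ yields $\alpha_\omega(G') \geq \alpha_\omega(G)$.

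I do not expect a serious obstacle here: the whole argument rests on the elementary topological fact that an open set avoids the limit points of any set it already misses. The one place where care is needed is to invoke inner regularity so that a potentially topologically wild measurable $G$-independent set can be replaced, with negligible loss of measure, by a compact set where local independence of $G$ can be applied.
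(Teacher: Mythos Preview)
Your proposal is correct and takes essentially the same approach as the paper. Your ``key observation'' that an open $G$-independent set is automatically $G'$-independent is exactly what the paper uses (phrased there as $E \subseteq V \times V \setminus S \times S$ with the right-hand side closed, hence $\cl E \subseteq V \times V \setminus S \times S$); your net argument and the paper's closed-complement argument are two formulations of the same elementary fact, and the remaining steps match the paper's proof almost verbatim.
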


\begin{proof}
Let~$I \subseteq V$ be a compact independent set in~$G'$. Then~$I$ is
also an independent set in~$G$ and, since~$G$ is locally independent,
there is an open independent set~$S$ in~$G$ that
contains~$I$. Since~$S$ is independent,
$E \cap (S \times S) = \emptyset$, and
hence~$E \subseteq (V \times V) \setminus (S \times S)$.
Now~$(V \times V) \setminus (S \times S)$ is a closed set and
so~$\cl E \subseteq (V \times V) \setminus (S \times S)$, whence~$S$
is also an independent set in~$G'$, finishing the proof that~$G'$ is
locally independent.

As for the second part of the statement, clearly
$\alpha_\omega(G') \leq \alpha_\omega(G)$, so we prove the reverse
inequality. Since~$\omega$ is inner regular, we can restrict ourselves
to compact sets, writing
\[
\alpha_\omega(G) = \sup\{\, \omega(I) : \text{$I \subseteq V$ is
  compact and independent}\,\}.
\]
So, to prove the reverse inequality, it suffices to show that a
compact independent set in~$G$ is also independent in~$G'$. Let~$I$ be
a compact independent set in~$G$ and let~$S$ be an open independent
set in~$G$ that contains~$I$, which exists since~$G$ is locally
independent. Since~$S$ is independent,
$E \cap (S \times S) = \emptyset$, and
hence~$E \subseteq (V \times V) \setminus (S \times S)$. Now
$(V \times V) \setminus (S \times S)$ is closed, and
so~$\cl E \subseteq (V \times V) \setminus (S \times S)$,
whence~$\cl E \cap (S \times S) = \emptyset$
and~$\cl E \cap (I \times I) = \emptyset$, that is,~$I$ is independent
in~$G'$.
\end{proof}

%%%%%%%%%%%%%%%%%%%%%%%%%%%%%%%%%%%%%%%%%%%%%%%%%%%%%%%%%%%%%%%%%%%%%%

\section{A conic-programming formulation for the independence number}
\label{sec:conic}

One of the best polynomial-time-computable upper bounds for the
independence number of a finite graph is the theta number, a graph
parameter introduced by Lovász~\cite{Lovasz1979}. Let~$G = (V, E)$ be
a finite graph. The theta number and its variants can be defined in
terms of the following \textit{conic-programming problem}, in which a
linear function is maximized over the intersection of a convex cone
with an affine subspace:
\begin{equation}
\label{eq:finite-conic-prog}
\optprob{
\text{maximize}&\langle J, A\rangle\\
&\tr A = 1,\\
&A(x, y) = 0&\text{if~$(x, y) \in E$},\\
&A \in \Kcal(V).
}
\end{equation}
Here, $A\colon V \times V \to \R$ is the optimization variable,
$J\colon V \times V \to \R$ is the all-ones matrix,
$\langle J, A\rangle = \tr JA = \sum_{x,y \in V} A(x, y)$,
and~$\Kcal(V) \subseteq \R^{V \times V}$ is a convex cone of symmetric
matrices. Both the optimal value of the problem above and the problem
itself are denoted by~$\vartheta(G, \Kcal(V))$.

The \defi{theta number} of~$G$, denoted by~$\vartheta(G)$, is
simply~$\vartheta(G, \psd(V))$, where~$\psd(V)$ is the cone of
positive-semidefinite matrices. In this case our problem becomes a
semidefinite program, whose optimal value can be
approximated in polynomial time to within any desired precision using
the ellipsoid method~\cite{GrotschelLS1988} or interior-point
methods~\cite{KlerkV2016}. We have moreover
$\vartheta(G) \geq \alpha(G)$: if~$I \subseteq V$ is a nonempty
independent set and $\chi_I\colon V \to \{0,1\}$ is its characteristic
function, then $A = |I|^{-1} \chi_I \otimes \chi_I^*$, which is the
matrix such that
\[
A(x, y) = |I|^{-1} \chi_I(x) \chi_I(y),
\]
is a feasible solution of~$\vartheta(G, \psd(V))$; moreover~$\langle
J, A\rangle = |I|$, and hence $\vartheta(G) \geq |I|$. Since~$I$ is
any nonempty independent set, $\vartheta(G) \geq \alpha(G)$ follows.

A strengthening of the Lov\'asz theta number is the parameter
$\vartheta'(G)$ introduced independently by McEliece, Rodemich, and
Rumsey~\cite{McElieceRR1978} and Schrij\-ver~\cite{Schrijver1979},
obtained by taking $\Kcal(V) = \psd(V) \cap \nn(V)$, where~$\nn(V)$ is
the cone of matrices with nonnegative entries.

Another choice for~$\Kcal(V)$ is the cone
\[
\Ccal(V) = \cone\{\, f \otimes f^* : \text{$f\colon V \to \R$ and $f
  \geq 0$}\,\} \subseteq \psd(V) \cap \nn(V)
\]
of \defi{completely positive matrices}. The proof above that
$\vartheta(G) \geq \alpha(G)$ works just as well when
$\Kcal(V) = \Ccal(V)$, and hence
\begin{equation}
  \label{eq:ineq-chain}
  \vartheta(G,\psd(V)) \geq \vartheta(G, \psd(V) \cap \nn(V)) \geq
  \vartheta(G, \Ccal(V)) \geq \alpha(G).
\end{equation}
De Klerk and Pasechnik~\cite{KlerkP2007} observed that a theorem of
Motzkin and Straus~\cite{MotzkinS1965} implies that the last
inequality in~\eqref{eq:ineq-chain} is actually tight; a streamlined
proof of this fact goes as follows. If~$A$ is a feasible solution
of~$\vartheta(G, \Ccal(V))$, then, after suitable normalization,
\begin{equation}
\label{eq:A-convex-comb}
A = \alpha_1 f_1 \otimes f_1^* + \cdots + \alpha_n f_n \otimes f_n^*,
\end{equation}
where~$\alpha_i > 0$, $f_i \geq 0$, and~$\|f_i\| = 1$ for
all~$i$. Since~$\|f_i\| = 1$, we have $\tr f_i \otimes f_i^* = 1$, and
then since $\tr A = 1$ we must have
$\alpha_1 + \cdots + \alpha_n = 1$. It follows that for some~$i$ we
have~$\langle J, f_i \otimes f_i^*\rangle \geq \langle J, A\rangle$;
assume then that this is the case for~$i = 1$.

Next, observe that since~$A(x, y) = 0$ for all~$(x, y) \in E$ and
each~$f_i$ is nonnegative, we must have~$f_1(x) f_1(y) = 0$ for
all~$(x, y) \in E$. This implies that~$I$, the support of~$f_1$, is an
independent set. Denoting by~$(f, g) = \sum_{x\in V} f(x) g(x)$ the
Euclidean inner product in~$\R^V$, we then have
\[
\langle J, A\rangle \leq \langle J, f_1 \otimes f_1^*\rangle = (f_1,
\chi_I)^2 \leq \|f_1\|^2 \|\chi_I\|^2 = |I| \leq \alpha(G)
\]
and, since~$A$ is any feasible solution, we get $\vartheta(G,
\Ccal(V)) \leq \alpha(G)$.

%This seems to be mainly a curiosity: since
%solving~$\vartheta(G, \Ccal(V))$ is the same as computing the
%independence number, computationally we have not gained anything. This
%is not completely true, however: now we have a source of constraints
%that we may use to find better bounds. Indeed,
%if~$Z\colon V \times V \to \R$ is such
%that~$\langle Z, A\rangle \geq 0$ for all~$A \in \Ccal(V)$, then we
%may add the linear constraint~$\langle Z, A\rangle \geq 0$
%to~$\vartheta(G, \psd(V))$, obtaining an upper bound for~$\alpha(G)$
%possibly stronger than~$\vartheta(G)$; this process may be repeated
%and more and more constraints may be added to strengthen the bound.

Problem~\eqref{eq:finite-conic-prog} can be naturally extended to
infinite topological graphs, as we will see now. Let $G = (V, E)$ be a
topological graph where~$V$ is compact,~$\omega$ be a Borel measure
on~$V$, $J \in L^2(V \times V)$ be the constant~1 kernel,
and~$\Kcal(V) \subseteq \Lsym^2(V \times V)$ be a convex cone of
symmetric kernels. When~$V$ is finite with the discrete topology
and~$\omega$ is the counting measure, the following optimization
problem is exactly~\eqref{eq:finite-conic-prog}:
\begin{equation}
\label{eq:theta-problem}
\optprob{\text{maximize}&\langle J, A\rangle\\
&\int_V A(x, x)\, d\omega(x) = 1,\\
&A(x, y) = 0\quad\text{if~$(x, y) \in E$},\\
&\text{$A$ is continuous and~$A \in \Kcal(V)$.}
}
\end{equation}
As before, we will denote both the optimal value (that is, the
supremum of the objective function) of this problem and the problem
itself by~$\vartheta(G, \Kcal(V))$.

The problem above is a straight-forward extension
of~\eqref{eq:finite-conic-prog}, except that instead of the trace of
the operator~$A$ we take the integral over the diagonal. Not every
Hilbert-Schmidt operator has a trace, so if we were to insist on using
the trace instead of the integral, we would have to require that~$A$
be trace class. Recall that~$A$ is \defi{trace class} and has
trace~$\tau$ if for every complete orthonormal system~$(f_\alpha)$
of~$L^2(V)$ we have
\[
  \tau = \sum_\alpha (A f_\alpha, f_\alpha).
\]
Mercer's theorem says that a continuous and positive kernel~$A$ has a
spectral decomposition in terms of continuous eigenfunctions that
moreover converges absolutely and uniformly. This implies in
particular that~$A$ is trace class and that its trace is the integral
over the diagonal. So, as long as~$\Kcal(V)$ is a subset of the cone of
positive kernels, taking the integral over the diagonal or the trace
is the same.

As before, there are at least two cones that can be put in place
of~$\Kcal(V)$. One is the cone~$\psd(V)$ of positive kernels.  The
other is the cone of \defi{completely positive kernels} on~$V$, namely
\begin{equation}
\label{eq:cp-def}
\Ccal(V) = \cl\cone\{\, f \otimes f^* : \text{$f \in L^2(V)$ and~$f
  \geq 0$}\,\},
\end{equation}
with the closure taken in the norm topology on~$L^2(V \times V)$, and
where~$f \geq 0$ means that~$f$ is nonnegative almost everywhere.
Note that $\Ccal(V) \subseteq \psd(V)$, and
hence~$\vartheta(G, \psd(V)) \geq \vartheta(G, \Ccal(V))$.

\begin{theorem}
\label{thm:cp-upper-bound}
If~$G = (V, E)$ is a locally independent graph, if~$V$ is a compact
Hausdorff space, and if~$\omega$ is an inner-regular Borel measure
on~$V$ such that $0 < \alpha_\omega(G) < \infty$, then
$\vartheta(G, \Ccal(V)) \geq \alpha_\omega(G)$.
\end{theorem}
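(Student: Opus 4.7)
The plan is to imitate the classical finite-graph argument sketched just before~\eqref{eq:ineq-chain}, where the rank-one matrix $A = |I|^{-1}\chi_I \otimes \chi_I^*$ witnesses $\vartheta(G,\Ccal(V)) \geq |I|$ for an independent set $I$. The naive analogue for a measurable independent $I \subseteq V$, namely $A = \omega(I)^{-1} \chi_I \otimes \chi_I^*$, fails the continuity requirement built into~\eqref{eq:theta-problem}. The fix is to approximate $\chi_I$ by a continuous nonnegative $f$ whose support is only slightly larger than $I$ and, crucially, is contained in an open independent set; the pure tensor $f \otimes f^*$ will then be a feasible solution in $\Ccal(V)$ whose objective value can be pushed arbitrarily close to $\omega(I)$.

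First I would use inner regularity to reduce to compact independent sets: every measurable independent set is exhausted from below by compact subsets, which remain independent, so $\alpha_\omega(G) = \sup\{\,\omega(I) : I \subseteq V \text{ compact and independent}\,\}$. Fix such an $I$ with $\omega(I)$ close to $\alpha_\omega(G)$, and apply local independence of $G$ to obtain an open independent set $S_0 \supseteq I$. Since $V$ is compact Hausdorff and $\omega(V) < \infty$, inner regularity of $\omega$ implies outer regularity by complementation; so given $\epsilon>0$ there is an open $U \supseteq I$ with $\omega(U) < \omega(I) + \epsilon$, and then $S := U \cap S_0$ is an open independent set with $I \subseteq S$ and $\omega(S) < \omega(I) + \epsilon$. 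Because compact Hausdorff spaces are normal, Urysohn's lemma now produces a continuous $f\colon V \to [0,1]$ with $f = 1$ on $I$ and $f = 0$ off $S$.

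Set $A = \bigl(\int_V f(x)^2\,d\omega(x)\bigr)^{-1}\, f\otimes f^*$. Then $A$ is continuous (as $f$ is), $A \in \Ccal(V)$ since it is a positive multiple of $f\otimes f^*$ with $f \geq 0$ in $L^2(V)$, and $\int_V A(x,x)\,d\omega(x) = 1$ by construction. Since the support of $f$ lies in $S$ and $S$ is independent, $A(x,y)=0$ whenever $(x,y)\in E$, so $A$ is feasible for~\eqref{eq:theta-problem}. The objective value is $\langle J, A\rangle = \bigl(\int f\bigr)^2 / \int f^2$, and from $f \geq \chi_I$ and $0 \leq f \leq \chi_S$ one gets $\int f \geq \omega(I)$ and $\int f^2 \leq \omega(S) < \omega(I) + \epsilon$, hence $\langle J, A\rangle \geq \omega(I)^2/(\omega(I) + \epsilon)$. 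Letting $\epsilon \to 0$ and then $\omega(I) \to \alpha_\omega(G)$ gives the required inequality.

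The step I expect to be least automatic is manufacturing the continuous approximation $f$ while keeping $\int f^2$ close to $\omega(I)$. This is where local independence of $G$ interacts with outer regularity of $\omega$: the neighborhood $S$ must be simultaneously independent (forcing $S \subseteq S_0$) and of measure close to $\omega(I)$ (forcing the outer-regular approximation of $I$). The compactness of $I$ and the normality of $V$ are precisely what let us combine both constraints via Urysohn's lemma.
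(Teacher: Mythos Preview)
Your proposal is correct and follows essentially the same route as the paper: pick a compact independent $I$ close to the supremum, use local independence to get an open independent $S \supseteq I$, produce a Urysohn function $f$ squeezed between $\chi_I$ and $\chi_S$, and take the rank-one kernel $A = \|f\|^{-2} f \otimes f^*$.

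The one place you work harder than necessary is the step where you invoke outer regularity of $\omega$ (and the auxiliary hypothesis $\omega(V) < \infty$, which is not stated in the theorem) to force $\omega(S) < \omega(I) + \epsilon$. The paper bypasses this entirely with a cleaner observation: the open set $S$ coming from local independence is itself independent, so automatically $\omega(S) \leq \alpha_\omega(G)$. This gives $\|f\|^2 \leq \omega(S) \leq \alpha_\omega(G)$ and hence $\langle J, A\rangle \geq \omega(I)^2 / \alpha_\omega(G) \geq (\alpha_\omega(G)-\epsilon)^2/\alpha_\omega(G)$, which suffices after letting $\epsilon \to 0$. So you never need to shrink $S$ via outer regularity; the independence of $S$ already caps its measure.
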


Bachoc, Nebe, Oliveira, and Vallentin~\cite{BachocNOV2009} proved a
similar result for the special case of distance graphs on the sphere;
the proof below uses similar ideas.

\begin{proof}
Fix $0 < \epsilon < \alpha_\omega(G)$. Since~$\omega$ is inner regular
and~$0 < \alpha_\omega(G) < \infty$, there is a compact independent
set~$I$ such that~$\omega(I) \geq \alpha_\omega(G) - \epsilon > 0$.

Since~$G$ is locally independent, there is an open independent set~$S$
that contains~$I$. Now~$V$ is a compact Hausdorff space and hence
normal~\cite[Proposition~4.25]{Folland1999} and~$I$
and~$V \setminus S$ are disjoint closed sets, so from Urysohn's lemma
there is a continuous function~$f\colon V \to [0,1]$ such
that~$f(x) = 1$ for~$x \in I$ and~$f(x) = 0$
for~$x \in V \setminus S$.

Note~$\|f\| > 0$ since~$\omega(I) > 0$. Set~$A = \|f\|^{-2} f \otimes
f^*$. Then~$A$ is a feasible solution of~$\vartheta(G,
\Ccal(V))$. Indeed,~$A$ is continuous and belongs to~$\Ccal(V)$, and
moreover $\int_V A(x, x)\, d\omega(x) = 1$. Since~$S$ is independent
and~$f$'s support is a subset of~$S$, $A(x, y) = 0$ if~$(x, y) \in E$,
and hence~$A$ is feasible. 

Finally, since~$S$ is independent,~$\omega(S) \leq
\alpha_\omega(G)$. But then~$\|f\|^2 \leq \omega(S)$ and
\[
\langle J, A \rangle = \frac{\langle J, f \otimes f^*\rangle}{\|f\|^2}
\geq \frac{\omega(I)^2}{\omega(S)} \geq \frac{(\alpha_\omega(G) -
  \epsilon)^2}{\alpha_\omega(G)}.
\]
Since~$\epsilon$ is any positive number, the theorem follows.
\end{proof}

Theorem~\ref{thm:cp-exactness} in~\S\ref{sec:exactness-theorem} states
that, under some extra assumptions on~$G$ and~$\omega$, one has
$\vartheta(G, \Ccal(V)) = \alpha_\omega(G)$, as in the finite case.
The proof of this theorem is fundamentally the same as in the finite
case; here is an intuitive description.

There are two key steps in the proof for finite graphs as given
above.  First, the matrix~$A$ is a convex combination of rank-one
nonnegative matrices, as in~\eqref{eq:A-convex-comb}. Second, this
together with the constraints of our problem implies that the support
of each~$f_i$ in~\eqref{eq:A-convex-comb} is an independent set. Then
the support of one of the~$f_i$s will give us a large independent set.

In the proof that~$\vartheta(G, \Ccal(V)) = \alpha_\omega(G)$ for an
infinite topological graph we will have to repeat the two steps
above. Now~$A$ will be a kernel, so it will not be in general a convex
combination of finitely many rank-one kernels as
in~\eqref{eq:A-convex-comb}; Choquet's
theorem~\cite[Theorem~10.7]{Simon2011} will allow us to express~$A$ as
a sort of convex combination of infinitely many rank-one
kernels. Next, it will not be the case that the support of any
function appearing in the decomposition of~$A$ will be independent,
but depending on some properties of~$G$ and~$\omega$ we will be able
to fix this by removing from the support the measure-zero set
consisting of all points that are not density points.

To be able to apply Choquet's theorem, we first need to better
understand the cone~$\Ccal(V)$; this we do next. 

%%%%%%%%%%%%%%%%%%%%%%%%%%%%%%%%%%%%%%%%%%%%%%%%%%%%%%%%%%%%%%%%%%%%%%

\section{The completely positive and the copositive cones on compact
  spaces}
\label{sec:cp-cop}

{\sl Throughout this section,~$V$ will be a compact Hausdorff space
  and~$\omega$ will be a finite Borel measure on~$V$ such
  that every open set has positive measure and $\omega(V) = 1$; the
  normalization of~$\omega$ is made for convenience only.}

For $f \in L^2(V)$ and~$g \in L^\infty(V)$, write~$f \odot g$ for the
function~$x \mapsto f(x) g(x)$; note that~$f \odot g \in
L^2(V)$. For~$A \in L^2(V \times V)$ and~$B \in L^\infty(V \times V)$,
define~$A \odot B$ analogously. For~$U \subseteq V$ and~$A \in L^2(V
\times V)$, denote by~$A[U]$ the restriction of~$A$ to~$U \times U$.

There are two useful topologies to consider on the~$L^2$ spaces we
deal with: the norm topology and the weak topology. We begin with a
short discussion about them, based on Chapter~5 of
Simon~\cite{Simon2011}. Statements will be given in terms of~$L^2(V)$,
but they also hold for~$L^2(V \times V)$ and~$\Lsym^2(V \times V)$.

The norm topology on~$L^2(V)$ coincides with the \defi{Mackey
  topology}, the strongest topology for which only the linear
functionals~$f \mapsto (f, g)$ for~$g \in L^2(V)$ are continuous.

The \defi{weak topology} on~$L^2(V)$ is the weakest topology for which
all linear functionals~$f \mapsto (f, g)$ for~$g \in L^2(V)$ are
continuous. A net\footnote{For more about nets, see
  Folland~\cite{Folland1999}.}~$(f_\alpha)$ converges in the weak
topology if and only if~$((f_\alpha, g))$ converges for
all~$g \in L^2(V)$.

The weak and norm topologies are \defi{dual topologies}, that is, the
topological dual of~$L^2(V)$ is the same for both topologies, and
hence it is isomorphic to~$L^2(V)$. Theorem~5.2~(iv) (ibid.) says that
if~$X \subseteq L^2(V)$ is a convex set, then~$\cl X$ is the same
whether it is taken in the weak or norm topology. Since the set
\[
\cone\{\, f \otimes f^* : \text{$f \in L^2(V)$ and~$f \geq 0$}\,\}
\]
is convex, it follows that if we take the closure in~\eqref{eq:cp-def}
in the weak topology we also obtain~$\Ccal(V)$.

The dual cone of~$\Ccal(V)$ is
\[
\Ccal^*(V) = \{\, Z \in \Lsym^2(V \times V) : \text{$\langle Z, f \otimes
f^*\rangle \geq 0$ for all~$f \in L^2(V)$ with~$f \geq 0$}\,\};
\]
it is the cone of \defi{copositive kernels} on~$V$. This is a convex
cone and, since it is closed in the weak topology
on~$\Lsym^2(V \times V)$, it is also closed in the norm
topology. Moreover, the dual of~$\Ccal^*(V)$, namely
\[
(\Ccal^*(V))^* = \{\, A \in \Lsym^2(V \times V) :
\text{$\langle Z, A\rangle \geq 0$ for all~$Z \in \Ccal^*(V)$}\,\}
\]
is exactly~$\Ccal(V)$ by the Bipolar
Theorem~\cite[Theorem~5.5]{Simon2011}; see also Problem~1, \S IV.5.3
in Barvinok~\cite{Barvinok2002}.

\begin{theorem}
\label{thm:cp-cop-basic}
Let~$A \in \Ccal(V)$ and~$Z \in \Ccal^*(V)$. Then:
\begin{enumerate}
\item[(i)] If~$U \subseteq V$ is measurable and has positive measure,
  then~$A[U] \in \Ccal(U)$ and~$Z[U] \in \Ccal^*(U)$, where~$U$
  inherits its topology and measure from~$V$.

\item[(ii)] If~$g \in L^\infty(V)$ is nonnegative, then~$A \odot (g \otimes
  g^*) \in \Ccal(V)$ and~$Z \odot (g \otimes g^*) \in \Ccal^*(V)$.
\end{enumerate}
\end{theorem}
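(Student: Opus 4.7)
Both claims reduce to the observation that the two operations under consideration — restriction $A \mapsto A[U]$ and the Hadamard-type multiplication $A \mapsto A \odot (g \otimes g^*)$ — are bounded linear maps on the relevant $L^2$ spaces that map the rank-one generators $f \otimes f^*$ (with $f \geq 0$) to rank-one generators of the same type. Combined with closedness of $\Ccal$, this handles the primal statements; for the dual statements one uses that the adjoint operations also preserve nonnegativity pointwise, so that copositivity is transferred by a direct pairing argument.

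\paragraph{Part (i).} Let $T \colon L^2(V \times V) \to L^2(U \times U)$ be the restriction map $T(A) = A[U]$, where $U$ carries the inherited measure $\omega|_U$. Then $T$ is linear with $\|T(A)\|_2 \leq \|A\|_2$, hence continuous. For $f \in L^2(V)$ with $f \geq 0$, the restriction $f|_U$ lies in $L^2(U)$ and is nonnegative, and $T(f \otimes f^*) = (f|_U) \otimes (f|_U)^*$. Therefore $T$ sends the convex cone generated by $\{f \otimes f^* : f \in L^2(V),\, f \geq 0\}$ into $\Ccal(U)$; since $\Ccal(U)$ is closed and $T$ is continuous, the image of the closure — namely $\Ccal(V)$ — also lies in $\Ccal(U)$, so $A[U] \in \Ccal(U)$. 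For the copositive statement, given any $f \in L^2(U)$ with $f \geq 0$, let $\tilde f$ denote the extension by zero to $V$; this is nonnegative and lies in $L^2(V)$, and a direct change of domain gives
\[
\langle Z[U], f \otimes f^* \rangle_{L^2(U \times U)} = \langle Z, \tilde f \otimes \tilde f^* \rangle_{L^2(V \times V)} \geq 0,
\]
since $Z \in \Ccal^*(V)$. Thus $Z[U] \in \Ccal^*(U)$.

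\paragraph{Part (ii).} Let $M_g \colon L^2(V) \to L^2(V)$ be pointwise multiplication by $g$; this is bounded with norm at most $\|g\|_\infty$ and preserves nonnegativity because $g \geq 0$. The induced map on kernels $S(A) = A \odot (g \otimes g^*)$ is bounded (by $\|g\|_\infty^2$) and sends $f \otimes f^*$ with $f \geq 0$ to $(f \odot g) \otimes (f \odot g)^*$, again a rank-one generator of $\Ccal(V)$. Arguing as in (i) — conic hulls of generators are mapped into $\Ccal(V)$, and continuity of $S$ together with closedness of $\Ccal(V)$ handles the closure — we obtain $A \odot (g \otimes g^*) \in \Ccal(V)$. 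For the copositive statement, for any $f \in L^2(V)$ with $f \geq 0$ we have $f \odot g \in L^2(V)$ and $f \odot g \geq 0$, and
\[
\langle Z \odot (g \otimes g^*), f \otimes f^* \rangle = \langle Z, (f \odot g) \otimes (f \odot g)^* \rangle \geq 0,
\]
so $Z \odot (g \otimes g^*) \in \Ccal^*(V)$.

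\paragraph{Main obstacle.} The only subtlety is the order of ``taking closures'' and ``applying the map'' in the primal arguments, but this is routine once one notes that a continuous linear map sends the closure of a set into the closure of its image, and that $\Ccal(U)$ and $\Ccal(V)$ are closed by definition. Everything else is a direct computation in $L^2$.
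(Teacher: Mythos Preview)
Your proof is correct and follows the same approach as the paper's: map rank-one generators to rank-one generators, use boundedness of the map together with closedness of $\Ccal$ for the primal statements, and verify the dual statements by a direct pairing computation. The paper is terser (it declares part~(i) ``immediate'' and leaves the closure argument in~(ii) implicit), but you have simply written out what the paper takes for granted.
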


\begin{proof}
The first statement is immediate, so let us prove the second. If~$f
\in L^2(V)$ is nonnegative, then~$f \odot g \geq 0$, and so $(f \otimes
f^*) \odot (g \otimes g^*) = (f \odot g) \otimes (f \odot g)^* \in
\Ccal(V)$. This implies that if~$A \in \Ccal(V)$, then~$A \odot (g
\otimes g^*) \in \Ccal(V)$.

Now take~$Z \in \Ccal^*(V)$. If~$f \in L^2(V)$ is nonnegative, then
\[
\langle Z \odot (g \otimes g^*), f \otimes f^*\rangle = \langle Z, (f
\odot g) \otimes (f \odot g)^*\rangle \geq 0,
\]
and hence~$Z \odot (g \otimes g^*) \in \Ccal^*(V)$.
\end{proof}

%=====================================================================

\subsection{Partitions and averaging\protect\footnote{The results in
    this section are similar to those related to step kernels in the
    theory of graph limits of Lovász and
    Szegedy~\cite[\S4.2]{LovaszS2006}.}}

An \defi{$\omega$-partition}
of~$V$ is a partition of~$V$ into finitely many measurable sets each
of positive measure. Given a function~$f \in L^2(V)$ and an
$\omega$-partition~$\Pcal$ of~$V$, the \defi{averaging} of~$f$
on~$\Pcal$ is the function~$f \avg \Pcal\colon V \to \R$ such that
\[
(f \avg \Pcal)(x) = \omega(X)^{-1} \int_X f(x')\, d\omega(x')
\]
for all~$X \in \Pcal$ and~$x \in X$. It is immediate that~$f \avg
\Pcal \in L^2(V)$. We also see~$f \avg \Pcal$ as a function with
domain~$\Pcal$, writing~$(f\avg\Pcal)(X)$ for the common value
of~$f\avg\Pcal$ in~$X \in \Pcal$.

Given~$A \in L^2(V \times V)$, the \defi{averaging} of~$A$ on~$\Pcal$
is the function~$A\avg\Pcal\colon V \times V \to \R$ such that
\[
(A\avg\Pcal)(x, y) = \omega(X)^{-1} \omega(Y)^{-1} \int_X \int_Y A(x',
y')\, d\omega(y') d\omega(x')
\]
for all~$X$, $Y \in \Pcal$ and~$x \in X$, $y \in Y$.
Again,~$A\avg\Pcal \in L^2(V \times V)$; moreover, if~$A$ is
symmetric, then so is~$A\avg\Pcal$. The kernel~$A\avg\Pcal$ can also
be seen as a function with domain~$\Pcal \times \Pcal$ (that is, as a
matrix), so~$(A\avg\Pcal)(X, Y)$ is the common value
of~$A\avg\Pcal$ in~$X \times Y$ for~$X$, $Y \in \Pcal$.
Seeing~$A\avg\Pcal$ as a matrix allows us to show that, as a
kernel,~$A\avg\Pcal$ has finite rank.  Note also that
$(f \otimes f^*)\avg\Pcal = (f\avg\Pcal) \otimes (f\avg\Pcal)^*$.

The averaging operation preserves step functions and step kernels on
the partition~$\Pcal$. In particular, it is idempotent: if~$f \in
L^2(V)$, then~$(f\avg\Pcal)\avg\Pcal = f\avg\Pcal$, and similarly for
kernels. Moreover, if~$A$, $B \in L^2(V \times V)$, then
\[
\langle A\avg\Pcal, B\rangle = \langle A\avg\Pcal, B\avg\Pcal\rangle =
\langle A, B\avg\Pcal\rangle.
\]

For a proof, simply expand all the inner products. On the one hand,
\[
\begin{split}
\langle A\avg\Pcal, B\avg\Pcal\rangle&=\sum_{X,Y \in \Pcal} \int_X
\int_Y (A\avg\Pcal)(x, y) (B\avg\Pcal)(x, y)\, d\omega(y) d\omega(x)\\
&=\sum_{X, Y \in \Pcal} (A\avg\Pcal)(X, Y) (B\avg\Pcal)(X, Y)
\omega(X) \omega(Y).
\end{split}
\]
On the other hand,
\[
\begin{split}
\langle A\avg\Pcal, B\rangle &= \sum_{X, Y \in \Pcal} \int_X \int_Y
(A\avg\Pcal)(x, y) B(x, y)\, d\omega(y) d\omega(x)\\
&= \sum_{X, Y \in \Pcal} (A\avg\Pcal)(X, Y) \int_X \int_Y B(x, y)\,
d\omega(y) d\omega(x)\\
&= \sum_{X, Y \in \Pcal} (A\avg\Pcal)(X, Y) (B\avg\Pcal)(X, Y)
\omega(X) \omega(Y)\\
&=\langle A\avg\Pcal, B\avg\Pcal\rangle.
\end{split}
\]
One concludes similarly that~$\langle A, B\avg\Pcal\rangle = \langle
A\avg\Pcal, B\avg\Pcal\rangle$.

\begin{theorem}
\label{thm:avg-cp-cop-relation}
Let~$\Pcal$ be an $\omega$-partition. If~$A \in \Ccal(V)$,
then~$A\avg\Pcal \in \Ccal(V)$ and~$A\avg\Pcal \in \Ccal(\Pcal)$,
where on~$\Pcal$ we consider the discrete topology and the counting
measure. Similarly, if~$Z \in \Ccal^*(V)$,
then~$Z\avg\Pcal \in \Ccal^*(V)$ and~$Z\avg\Pcal \in \Ccal^*(\Pcal)$.
\end{theorem}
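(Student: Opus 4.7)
The plan is to prove all four claims by reducing to the rank-one generators of $\Ccal(V)$, and to handle the dual cone $\Ccal^*(V)$ by moving the averaging to the test function via the adjoint identity $\langle A \avg \Pcal, B\rangle = \langle A, B \avg \Pcal\rangle$ recorded above. The key observation for the primal side is that if $f \in L^2(V)$ is nonnegative, then so is $f \avg \Pcal$, both as a function on $V$ and, by writing $(f \avg \Pcal)(X)$ for its constant value on $X \in \Pcal$, as a vector in $L^2(\Pcal) = \R^\Pcal$ with the counting measure.

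For the primal statements, I would first take $A = f \otimes f^*$ with $f \in L^2(V)$ nonnegative. By the already-recorded identity $(f \otimes f^*)\avg\Pcal = (f\avg\Pcal) \otimes (f\avg\Pcal)^*$, the averaged kernel is the outer product of a nonnegative function, so it lies in $\Ccal(V)$; read as a matrix on $\Pcal \times \Pcal$ it is the outer product of the nonnegative vector $((f\avg\Pcal)(X))_{X \in \Pcal}$ with itself, so it lies in $\Ccal(\Pcal)$. Both claims extend to conic combinations by linearity. To pass to the closure, I would use that the averaging operator $A \mapsto A \avg \Pcal$ on $L^2(V \times V)$ is the orthogonal projection onto the finite-dimensional subspace of step kernels on $\Pcal \times \Pcal$ (the adjoint identity above is self-adjointness, and idempotency is noted in the excerpt), hence bounded and continuous; combined with the closedness of $\Ccal(V)$ in norm, this gives $A \avg \Pcal \in \Ccal(V)$ for every $A \in \Ccal(V)$. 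On the $\Pcal$-side no further closure step is needed since $\Ccal(\Pcal)$ is already closed in the finite-dimensional space $\Lsym^2(\Pcal \times \Pcal)$.

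For the dual statements, I would use the adjoint identity to transfer the averaging. Given any nonnegative $f \in L^2(V)$,
\[
  \langle Z \avg \Pcal, f \otimes f^*\rangle = \langle Z, (f \otimes f^*) \avg \Pcal\rangle = \langle Z, (f \avg \Pcal) \otimes (f \avg \Pcal)^*\rangle \geq 0,
\]
since $f \avg \Pcal \geq 0$ and $Z \in \Ccal^*(V)$; this shows $Z \avg \Pcal \in \Ccal^*(V)$. For $Z \avg \Pcal \in \Ccal^*(\Pcal)$, given any nonnegative $h \in \R^\Pcal$, I would build the nonnegative step function $\tilde h \in L^\infty(V) \subseteq L^2(V)$ defined by $\tilde h(x) = h(X)/\omega(X)$ for $x \in X$. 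Direct expansion then yields
\[
  \langle Z, \tilde h \otimes \tilde h^*\rangle = \sum_{X, Y \in \Pcal} h(X) h(Y) (Z \avg \Pcal)(X, Y),
\]
which is nonnegative because $Z \in \Ccal^*(V)$; this is exactly copositivity of $Z \avg \Pcal$ viewed as a matrix on $\Pcal$ with counting measure.

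No step looks genuinely difficult. The only care point is the bookkeeping of the two different measures on $\Pcal$: the counting measure used in $L^2(\Pcal)$ versus the measure $(\omega(X))_{X \in \Pcal}$ inherited from $V$. The normalization $\tilde h(x) = h(X)/\omega(X)$ in the last step is chosen precisely to cancel the factors of $\omega(X)\omega(Y)$ that appear when integrating a step kernel against $Z$ over $V \times V$, converting cleanly between the two viewpoints.
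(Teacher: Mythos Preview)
Your proof is correct. The dual-cone arguments are essentially identical to the paper's: you move the averaging across the pairing via $\langle Z\avg\Pcal, f\otimes f^*\rangle = \langle Z, (f\avg\Pcal)\otimes(f\avg\Pcal)^*\rangle$, and for $\Ccal^*(\Pcal)$ you build the step function $\tilde h(x) = h(X)/\omega(X)$, exactly as the paper does.

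The primal-cone arguments, however, take a different route. The paper proves the dual statements \emph{first} and then deduces $A\avg\Pcal \in \Ccal(V)$ by biduality: since $(\Ccal^*(V))^* = \Ccal(V)$, it suffices to check $\langle A\avg\Pcal, Z\rangle = \langle A, Z\avg\Pcal\rangle \geq 0$ for every $Z \in \Ccal^*(V)$, which holds because $Z\avg\Pcal \in \Ccal^*(V)$ was just shown. The statement $A\avg\Pcal \in \Ccal(\Pcal)$ is handled the same way, by lifting an arbitrary $Z \in \Ccal^*(\Pcal)$ to a kernel $Z' \in \Ccal^*(V)$. You instead work directly with the rank-one generators, use $(f\otimes f^*)\avg\Pcal = (f\avg\Pcal)\otimes(f\avg\Pcal)^*$, and pass to the closure via the norm-continuity of the averaging projection. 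Your approach is more elementary in that it avoids invoking the bipolar theorem, while the paper's approach is more uniform, treating all four claims through the same pairing mechanism. One small remark: your sentence ``on the $\Pcal$-side no further closure step is needed'' is slightly misleading---you still need the continuity of the map $A \mapsto (A\avg\Pcal)(X,Y)$ to pass the limit into $\Ccal(\Pcal)$; what is automatic is only the closedness of $\Ccal(\Pcal)$ in finite dimensions.
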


\begin{proof}
Let us prove the second statement first. Take~$Z \in \Ccal^*(V)$
and~$f \in L^2(V)$ with~$f \geq 0$. Then~$f\avg\Pcal \geq 0$ and
\[
\langle Z\avg\Pcal, f\otimes f^*\rangle = \langle Z, (f \otimes f^*)
\avg \Pcal\rangle = \langle Z, (f\avg\Pcal) \otimes
(f\avg\Pcal)^*\rangle \geq 0,
\]
whence~$Z\avg\Pcal \in \Ccal^*(V)$.

To see that~$Z\avg\Pcal \in \Ccal^*(\Pcal)$, take a
function~$\phi\colon\Pcal \to \R$ with~$\phi \geq 0$. Let~$f \in
L^2(V)$ be the function such that~$f(x) = \phi(X) \omega(X)^{-1}$ for
all~$X \in \Pcal$ and~$x \in X$; notice~$f \geq 0$. Then
\[
\begin{split}
&\sum_{X,Y \in \Pcal} (Z\avg\Pcal)(X, Y) \phi(X) \phi(Y)\\
&\qquad=\sum_{X,Y \in \Pcal} \int_X \int_Y (Z\avg\Pcal)(x, y) \phi(X) \phi(Y)
\omega(X)^{-1} \omega(Y)^{-1}\, d\omega(y) d\omega(x)\\
&\qquad=\langle Z\avg\Pcal, f \otimes f^*\rangle
\geq 0,
\end{split}
\]
and~$Z\avg\Pcal \in \Ccal^*(\Pcal)$.

Now take~$A \in \Ccal(V)$. If~$Z \in \Ccal^*(V)$, then
since~$Z\avg\Pcal \in \Ccal^*(V)$ we have
\[
\langle A\avg\Pcal, Z\rangle = \langle A, Z\avg\Pcal\rangle \geq 0.
\]
So, since~$(\Ccal^*(V))^* = \Ccal(V)$, we have~$A\avg\Pcal \in
\Ccal(V)$.

Seeing that~$A\avg\Pcal \in \Ccal(\Pcal)$ is only slightly more
complicated. Given~$Z \in \Ccal^*(\Pcal)$, consider the kernel~$Z' \in
L^2(V \times V)$ such that~$Z'(x, y) = Z(X, Y) \omega(X)^{-1}
\omega(Y)^{-1}$ for all~$X$, $Y \in \Pcal$ and~$x \in X$, $y \in
Y$. Then~$Z' \in \Ccal^*(V)$. Indeed, let~$f \in L^2(V)$ be
nonnegative. Note~$Z'\avg\Pcal = Z'$ and expand~$\langle Z', f \otimes
f^*\rangle$ to get
\[
\begin{split}
&\langle Z', f \otimes f^*\rangle = \langle Z'\avg\Pcal, f\otimes
f^*\rangle = \langle Z', (f\avg\Pcal) \otimes (f\avg\Pcal)^*\rangle\\
&\qquad=\sum_{X, Y \in \Pcal} \int_X \int_Y Z(X, Y) \omega(X)^{-1}
\omega(Y)^{-1} (f\avg\Pcal)(X) (f\avg\Pcal)(Y)\,
d\omega(y)d\omega(x)\\
&\qquad=\sum_{X,Y \in \Pcal} Z(X, Y) (f\avg\Pcal)(X) (f\avg\Pcal)(Y)
\geq 0,
\end{split}
\]
since~$f\avg\Pcal \geq 0$. So~$Z' \in \Ccal^*(V)$. Now,
since~$A\avg\Pcal \in \Ccal(V)$ and~$Z'\in \Ccal^*(V)$,
\[
\sum_{X, Y \in \Pcal} (A\avg\Pcal)(X, Y) Z(X, Y) = \langle A\avg\Pcal,
Z'\rangle \geq 0,
\]
and~$A\avg\Pcal \in \Ccal(\Pcal)$.
\end{proof}

\begin{corollary}
\label{cor:avg-finite-rank-decomposition}
If~$\Pcal$ is an $\omega$-partition and if~$A \in \Ccal(V)$, then
there are nonnegative and nonzero functions~$f_1$, \dots,~$f_n \in
L^2(V)$, each one constant in each~$X \in \Pcal$, such that
\[
A\avg\Pcal = f_1 \otimes f_1^* + \cdots + f_n \otimes f_n^*.
\]
\end{corollary}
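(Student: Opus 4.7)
The plan is to reduce the claim to the finite-dimensional case by exploiting the fact that $A\avg\Pcal$ can be viewed both as a kernel on $V \times V$ and, because it is constant on each rectangle $X \times Y$ with $X, Y \in \Pcal$, as a matrix indexed by the finite set $\Pcal$. Theorem~\ref{thm:avg-cp-cop-relation} tells us that, under this second interpretation, $A\avg\Pcal \in \Ccal(\Pcal)$, where $\Pcal$ carries the discrete topology and counting measure. So the entire problem is transferred to a finite-dimensional cone and all that remains is to (a) extract a concrete finite decomposition there and (b) lift it back to $V$.

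Next I would invoke the fact that when $\Pcal$ is finite the cone $\Ccal(\Pcal)$ defined via closure coincides with the ordinary completely-positive cone of Section~\ref{sec:introduction}. Concretely, in the finite-dimensional space $\Lsym^2(\Pcal \times \Pcal)$ the set $\cone\{\phi \otimes \phi^* : \phi \in \R^\Pcal,\ \phi \geq 0\}$ is already closed: it is the conic hull of the compact set $\{\phi \otimes \phi^* : \phi \geq 0,\ \|\phi\|=1\}$, which does not contain the origin, and conic hulls of such compact sets are closed in finite dimensions. Consequently $A\avg\Pcal$ admits a representation $A\avg\Pcal = \sum_{i=1}^n \phi_i \otimes \phi_i^*$ with nonnegative $\phi_i \in \R^\Pcal$ (Carathéodory's theorem even gives $n \leq \binom{|\Pcal|+1}{2}$). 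Any $\phi_i$ that happens to be zero can be discarded to ensure all summands are nonzero.

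Finally I would lift this finite decomposition back to $V$ by setting $f_i(x) = \phi_i(X)$ whenever $x \in X \in \Pcal$. Each $f_i$ is then a nonnegative function on $V$ that is constant on every part of $\Pcal$; since $\Pcal$ is finite and $\omega(V) = 1$, the function $f_i$ is bounded and therefore belongs to $L^2(V)$. A direct computation on a typical rectangle $X \times Y$ gives
\[
  \sum_{i=1}^n (f_i \otimes f_i^*)(x,y) = \sum_{i=1}^n \phi_i(X)\phi_i(Y) = (A\avg\Pcal)(X,Y),
\]
which is precisely the value of $A\avg\Pcal$ at $(x,y)$, so $A\avg\Pcal = \sum_i f_i \otimes f_i^*$ as kernels on $V \times V$.

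The main obstacle is the middle step: confirming that in the finite-dimensional setting the closure in the definition of $\Ccal(\Pcal)$ is redundant, so that genuine finite conic combinations suffice. Everything else is formal: Theorem~\ref{thm:avg-cp-cop-relation} does the real work of moving into the finite world, and the lift back to $V$ via step functions is purely notational, reflecting the fact that the averaging operation produces step kernels on $\Pcal$.
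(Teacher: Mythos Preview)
Your proposal is correct and follows essentially the same route as the paper: apply Theorem~\ref{thm:avg-cp-cop-relation} to land in $\Ccal(\Pcal)$, extract a finite decomposition there, and lift back to~$V$ via step functions. You in fact supply more justification than the paper does for the one point the paper leaves implicit, namely that in the finite-dimensional setting the closure in the definition of $\Ccal(\Pcal)$ is redundant.
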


\begin{proof}
From Theorem~\ref{thm:avg-cp-cop-relation} we know that~$A\avg\Pcal
\in \Ccal(\Pcal)$. So there are nonnegative and nonzero
functions~$\phi_1$, \dots,~$\phi_n$ with domain~$\Pcal$ such that
\[
A\avg\Pcal = \phi_1 \otimes \phi_1^* + \cdots + \phi_n \otimes
\phi_n^*,
\]
where~$A\avg\Pcal$ is seen as a function on~$\Pcal \times \Pcal$. The
result now follows by taking~$f_i(x) = \phi_i(X)$ for~$X \in \Pcal$
and~$x \in X$.
\end{proof}

%=====================================================================

\subsection{Approximation of continuous kernels}

The main use of averaging is in approximating continuous kernels by
finite-rank ones. We say that a continuous kernel
$A\colon V \times V \to \R$ \defi{varies} (strictly) less
than~$\epsilon$ over an $\omega$-partition~$\Pcal$ if the variation
of~$A$ in each~$X \times Y$ for~$X$, $Y \in \Pcal$ is less
than~$\epsilon$. We say that a partition~$\Pcal$ of~$V$
\defi{separates} $U \subseteq V$ if~$|U \cap X| \leq 1$ for
all~$X \in \Pcal$.  The main tool we need is the following result.

\begin{theorem}
\label{thm:partition-variation}
If~$A\colon V \times V \to \R$ is continuous and if~$U \subseteq V$ is
finite, then for every~$\epsilon > 0$ there is an
$\omega$-partition~$\Pcal$ that separates~$U$ and over which~$A$
varies less than~$\epsilon$.
\end{theorem}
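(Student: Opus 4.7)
The plan is to construct the $\omega$-partition from a finite family $\Scal$ of open subsets of $V$ chosen to control the variation of $A$ and to separate the points of $U$; the partition will consist of the atoms of the Boolean algebra generated by $\Scal$, after any zero-measure cells are absorbed into neighboring positive-measure ones.

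First I would establish the form of uniform continuity needed. Since $A$ is continuous on the compact Hausdorff space $V \times V$, for each $(p,q)\in V\times V$ one can pick open neighborhoods $U_{p,q}$ of $p$ and $W_{p,q}$ of $q$ with $A$ varying by less than $\epsilon/4$ on $U_{p,q}\times W_{p,q}$, and then extract a finite subcover $\{U_r\times W_r\}_{r=1}^R$ of $V\times V$. For each $p$ I would set $N_p=\bigcap_{r:\,p\in U_r}U_r\cap\bigcap_{r:\,p\in W_r}W_r$, an open neighborhood of $p$, and take a finite subcover $\{N_1,\ldots,N_M\}$ of $V$. The key point is that if $(x,y)\in N_{p_i}\times N_{p_j}$ and $r$ is chosen with $(p_i,p_j)\in U_r\times W_r$, then $p_i\in U_r$ forces $N_{p_i}\subseteq U_r$, and symmetrically $N_{p_j}\subseteq W_r$; this containment gives that $A$ varies less than $\epsilon/2$ on each rectangle $N_i\times N_j$.

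Next I would use normality of $V$ to take pairwise-disjoint open neighborhoods $O_1,\ldots,O_k$ of $u_1,\ldots,u_k$, shrinking them so that $A$ also varies less than $\epsilon/2$ on every rectangle of the form $O_l\times O_{l'}$, $O_l\times N_i$, or $N_i\times O_l$ (finitely many constraints, satisfiable by continuity of $A$ at each $(u_l,\cdot)$). With $\Scal=\{N_1,\ldots,N_M,O_1,\ldots,O_k\}$, I would form the atoms $X_T=\bigcap_{S\in T}S\cap\bigcap_{S\notin T}(V\setminus S)$ for $T\subseteq\Scal$. The non-empty atoms partition $V$ into finitely many measurable sets; since $(x,y)\in X_T\times X_{T'}$ lies in some $S\times S'$ with $S\in T$ and $S'\in T'$, we get $X_T\times X_{T'}\subseteq S\times S'$ and hence $A$ varies less than $\epsilon/2$ on $X_T\times X_{T'}$. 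Each $u_l$ sits in $O_l$ but in no other $O_{l'}$, so distinct points of $U$ fall in distinct atoms.

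The main obstacle is that some of these non-empty atoms may have zero $\omega$-measure, since the full-support hypothesis only guarantees positive measure for sets with non-empty interior. I would handle this by absorption: every non-empty atom lies in at least one $S\in\Scal$ because $\{N_i\}$ covers $V$, so for each zero-measure atom $X_{T^0}$ I would choose some $S(T^0)\in T^0$, preferring $S(T^0)=O_l$ when $u_l\in X_{T^0}$. Because $S(T^0)$ is open with positive measure and is a disjoint union of atoms, it contains at least one positive-measure atom $X_{T^+}$ into which $X_{T^0}$ can be merged; by grouping the zero-measure atoms according to their chosen $S(T^0)$ and assigning each group to a single positive-measure atom inside that $S(T^0)$, I can ensure that every merged part stays inside one $S\in\Scal$, so the variation bound $\epsilon/2<\epsilon$ persists for the new partition. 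The preference rule for the $u_l$'s confines their parts to the disjoint $O_l$'s and hence preserves the separation of $U$ in the final $\omega$-partition.
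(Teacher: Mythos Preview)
Your approach is essentially the paper's: generate Boolean atoms from a finite family of open sets chosen to control the variation of $A$ and to separate $U$, then absorb the null-measure atoms into positive-measure ones. The paper differs in two details. It builds the separation of $U$ into the initial cover by giving every $x\in V$ a neighborhood $N_x$ with each point of $U\setminus\{x\}$ in its \emph{exterior}; this forces the atom containing each $u_l$ to have nonempty interior and hence positive measure, so no $u_l$ ever participates in the absorption step. And it absorbs via closures rather than by assignment: writing $W$ for the union of null atoms, one has $\cl(V\setminus W)=V$, and setting $X_i'=X_i\cup(W\cap\cl X_i)\setminus(X_1'\cup\cdots\cup X_{i-1}')$ gives $X_i'\subseteq\cl X_i$, so continuity of $A$ carries the strict bound on $X_i\times X_j$ to $\le\epsilon/2<\epsilon$ on $X_i'\times X_j'$.

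Your absorption argument has a gap. You assert that after grouping the null atoms by their chosen $S(T^0)$ and merging each group into a single positive-measure atom inside that $S$, ``every merged part stays inside one $S\in\Scal$.'' That requires distinct target atoms for distinct groups, and nothing guarantees this. For instance on $V=[0,1]$ (with $U=\emptyset$) take $\Scal=\{[0,1),(0,1]\}$: the only positive-measure atom is $(0,1)$, while the null atoms $\{0\}$ and $\{1\}$ lie in different groups, both forced to target $(0,1)$; the resulting merged part is all of $[0,1]$, contained in no single $S$. Your conclusion is still true---each point $p$ of a merged part $P$ lies in some $S_p\in\Scal$ that also contains the target $X_P$, so a triangle inequality through a fixed $(x_0,q_0)\in X_P\times X_Q$ gives variation $<2\cdot(\epsilon/2)=\epsilon$ on $P\times Q$---but that is not the argument you gave, and it only yields $<\epsilon$, not the $<\epsilon/2$ you claim. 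The closure device is the clean way to handle this step.
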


\begin{proof}
Since~$V$ is a Hausdorff space and~$U$ is finite, every~$x \in V$ has
a neighborhood~$N_x$ such that every~$y \in U \setminus \{x\}$ is in
the exterior of~$N_x$. Since~$A$ is continuous, for every~$(x, y) \in
V \times V$ we can choose neighborhoods~$N_{x,y}^x$ of~$x$
and~$N_{x,y}^y$ of~$y$ such that the variation of~$A$ in~$N_{x,y}^x
\times N_{x,y}^y$ is less than~$\epsilon / 2$. The same is then true
of the neighborhoods~$N_{x,y}^x \cap N_x$ and~$N_{x,y}^y \cap N_y$
of~$x$ and~$y$.

The sets~$(N_{x,y}^x \cap N_x) \times (N_{x,y}^y \cap N_y)$ form an
open cover of~$V \times V$, and since~$V \times V$ is compact there is
a finite subcover~$\Bcal$ consisting of such sets. Set
\[
\Ccal = \{\, S \subseteq V : \text{there is~$T$ such that~$(S, T)$
  or~$(T, S) \in \Bcal$}\,\}.
\]
Note~$\Ccal$ is an open cover of~$V$. Moreover, by construction, $|U
\cap S| \leq 1$ for all~$S \in \Ccal$ and, if~$x \in U$ is such
that~$x \notin S$ for some~$S \in \Ccal$, then~$x$ is in the exterior
of~$S$. Let us turn this open cover~$\Ccal$ into the desired
$\omega$-partition~$\Pcal$.

For~$\Scal \subseteq \Ccal$, consider the set
\[
E_\Scal = \bigcap_{S \in \Scal} S \setminus \bigcup_{S \in \Ccal
  \setminus \Scal} S = \bigcap_{S \in \Scal} S \cap \bigcap_{S \in
    \Ccal\setminus\Scal} V \setminus S.
\]
Write~$\Rcal = \{\, E_\Scal : \text{$\Scal \subseteq \Ccal$
  and~$E_\Scal \neq \emptyset$}\,\}$. Then~$\Rcal$ is a partition
of~$V$ that, by construction, separates~$U$. Moreover, if~$X$,
$Y \in \Rcal$, then the variation of~$A$ in~$X \times Y$ is less
than~$\epsilon / 2$. Indeed, note that if~$\Scal \subseteq \Ccal$
and~$S \in \Ccal$ are such that~$E_\Scal \cap S \neq \emptyset$,
then~$E_\Scal \subseteq S$. Since~$\Bcal$ is a cover of~$V \times V$,
given~$X$, $Y \in \Rcal$ there must be~$S \times T \in \Bcal$ such
that~$(X \times Y) \cap (S \times T) \neq \emptyset$, implying that~$X
\cap S \neq \emptyset$ and~$Y \cap T \neq \emptyset$, whence~$X
\subseteq S$ and~$Y \subseteq T$. But then~$X \times Y \subseteq S
\times T$, and we know that the variation of~$A$ in~$S \times T$ is
less than~$\epsilon / 2$.

Now~$\Rcal$ may not be an $\omega$-partition: though the sets
in~$\Rcal$ are measurable, some may have measure~0. This does not
happen, however, for sets in~$\Rcal$ that contain some point
in~$U$. Indeed, if for~$\Scal \subseteq \Ccal$ and~$x \in U$ we
have~$x \in E_\Scal$, then~$x \in \bigcap_{S \in \Scal} S$, which is
an open set. Moreover,~$x \notin S$ for all~$S \in \Ccal \setminus
\Scal$, and hence~$x$ is in the exterior of each~$S \in \Ccal
\setminus \Scal$. But then~$x$ is in the interior of~$E_\Scal$ and
so~$E_\Scal$ has nonempty interior and hence positive measure.

Let us fix~$\Rcal$ by getting rid of sets with measure~0. Let~$W$ be
the union of all sets in~$\Rcal$ with measure~0. Note~$\cl(V\setminus
W) = V$. For if not, then there would be~$x \in W$ and a
neighborhood~$N$ of~$x$ such that~$N \cap \cl(V\setminus W) =
\emptyset$. But then~$N \subseteq V \setminus \cl(V\setminus W)
\subseteq W$, and hence~$\omega(W) > 0$, a contradiction.

Let~$X_1$, \dots,~$X_n$ be the sets of positive measure
in~$\Rcal$. Set
\[
X_i' = X_i \cup (W \cap \cl X_i) \setminus (X_1' \cup \cdots \cup
X_{i-1}').
\]
Since~$V = \cl(V\setminus W) = \cl X_1 \cup \cdots \cup \cl X_n$,
$\Pcal = \{X_1', \ldots, X_n'\}$ is an $\omega$-partition of~$V$;
moreover, since~$U \cap W = \emptyset$, $\Pcal$
separates~$U$. Now~$X_i' \subseteq \cl X_i$, and so the variation
of~$A$ in~$X \times Y$ for~$X$, $Y \in \Pcal$ is at most~$\epsilon
/ 2$, and hence less than~$\epsilon$.
\end{proof}

The existence of $\omega$-partitions over which~$A$ has small
variation allows us to approximate a continuous kernel by its
averages.

\begin{theorem}
\label{thm:avg-uniform-approx}
If a continuous kernel~$A\colon V \times V \to \R$ varies less
than~$\epsilon$ over an $\omega$-partition~$\Pcal$, then $|A(x, y) -
(A\avg\Pcal)(x, y)| < \epsilon$ for all~$x$, $y \in V$.
\end{theorem}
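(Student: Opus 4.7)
The plan is to fix an arbitrary pair $(x,y) \in V \times V$, locate the unique blocks $X, Y \in \Pcal$ with $x \in X$ and $y \in Y$, and then express the difference $A(x,y) - (A\avg\Pcal)(x,y)$ as a double average over $X \times Y$ of the pointwise differences $A(x,y) - A(x',y')$. Concretely, since $\omega(X)^{-1} \omega(Y)^{-1} \int_X \int_Y d\omega(y') d\omega(x') = 1$, we can write
\[
A(x,y) - (A\avg\Pcal)(x,y) = \omega(X)^{-1}\omega(Y)^{-1} \int_X \int_Y \bigl(A(x,y) - A(x',y')\bigr)\, d\omega(y') d\omega(x').
\]

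The point is now that the variation hypothesis applies precisely to the block $X \times Y$: the quantity $\sup_{X\times Y} A - \inf_{X\times Y} A$ is (strictly) less than $\epsilon$, and so for every $(x',y') \in X \times Y$ we have $|A(x,y) - A(x',y')| \leq \sup_{X \times Y} A - \inf_{X \times Y} A < \epsilon$. Applying the triangle inequality under the integral above yields
\[
|A(x,y) - (A\avg\Pcal)(x,y)| \leq \sup_{X \times Y} A - \inf_{X \times Y} A < \epsilon,
\]
which is the desired bound.

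There is essentially no obstacle here; the only subtlety worth checking is that the strict inequality survives the averaging. This is automatic because the bound $\sup_{X \times Y} A - \inf_{X \times Y} A$ is a single constant strictly less than $\epsilon$ that dominates the integrand uniformly in $(x',y')$, so the averaged quantity is bounded by the same constant and thus remains strictly below $\epsilon$. No appeal to continuity beyond what is already used to define ``varies less than $\epsilon$'' is needed, and no measurability issues arise since $X$ and $Y$ have positive measure by the definition of an $\omega$-partition.
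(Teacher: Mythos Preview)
Your proof is correct and follows essentially the same approach as the paper: both arguments locate the blocks $X$, $Y$ containing $x$, $y$ and bound the integrand pointwise by the variation on $X\times Y$. The paper splits the bound into two one-sided inequalities while you combine them via a single averaged-difference expression, but the content is identical.
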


\begin{proof}
Take~$x$, $y \in V$ and say~$x \in X$, $y \in Y$ for some~$X$, $Y \in
\Pcal$. Then
\[
\begin{split}
(A\avg\Pcal)(x, y) &= \omega(X)^{-1} \omega(Y)^{-1} \int_X \int_Y
A(x', y')\, d\omega(y') d\omega(x')\\
&< \omega(X)^{-1} \omega(Y)^{-1} \int_X \int_Y A(x, y) + \epsilon\,
d\omega(y') d\omega(x')\\
&= A(x, y) + \epsilon.
\end{split}
\]
Similarly, $(A\avg\Pcal)(x, y) > A(x, y) - \epsilon$, and the theorem
follows.
\end{proof}

\begin{corollary}
\label{cor:avg-trace-norm-approx}
If a continuous kernel~$A\colon V \times V \to \R$ varies less
than~$\epsilon$ over an $\omega$-partition~$\Pcal$, then $\|A -
A\avg\Pcal\| < \epsilon$. If moreover~$A$ is positive, then $|\tr A -
\tr A\avg\Pcal| < \epsilon$.
\end{corollary}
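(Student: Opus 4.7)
The strategy is to lift the pointwise estimate of Theorem~\ref{thm:avg-uniform-approx} to integral estimates, using the standing normalization $\omega(V) = 1$ so that $\omega(V \times V) = 1$ as well. The norm bound is immediate: squaring the inequality $|A(x,y) - (A\avg\Pcal)(x,y)| < \epsilon$ and integrating gives
\[
\|A - A\avg\Pcal\|^2 = \int_V \int_V |A(x,y) - (A\avg\Pcal)(x,y)|^2\, d\omega(y)\, d\omega(x) < \epsilon^2 \omega(V)^2 = \epsilon^2,
\]
with strict inequality because the nonnegative integrand $\epsilon^2 - |A - A\avg\Pcal|^2$ is strictly positive at every point of the finite-measure space $V \times V$.

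For the trace statement I would first express both traces as diagonal integrals. Since $A$ is continuous and positive, Mercer's theorem (recalled before Theorem~\ref{thm:cp-upper-bound}) gives $\tr A = \int_V A(x,x)\, d\omega(x)$; this is where the extra positivity hypothesis enters, since it makes $A$ trace class. For $A\avg\Pcal$, which is a finite-rank step kernel on $\Pcal$, an elementary diagonalization in the orthonormal system $\{\omega(X)^{-1/2}\chi_X\}_{X \in \Pcal}$ gives $\tr A\avg\Pcal = \int_V (A\avg\Pcal)(x,x)\, d\omega(x)$ as well.

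With both traces expressed as diagonal integrals, I would split along $\Pcal$ and bound block by block. For each $X \in \Pcal$ and every $x \in X$, Theorem~\ref{thm:avg-uniform-approx} applied at $(x,x)$ gives $|A(x,x) - (A\avg\Pcal)(x,x)| < \epsilon$, so integrating over $X$ yields
\[
\left|\int_X A(x,x)\, d\omega(x) - \int_X (A\avg\Pcal)(x,x)\, d\omega(x)\right| < \epsilon\, \omega(X),
\]
and summing over the finitely many blocks of $\Pcal$ produces $|\tr A - \tr A\avg\Pcal| < \epsilon \sum_{X \in \Pcal} \omega(X) = \epsilon$. The only non-routine ingredient is the trace identity for $A\avg\Pcal$; everything else reduces to Theorem~\ref{thm:avg-uniform-approx} together with the finiteness of $\Pcal$ and the normalization of $\omega$.
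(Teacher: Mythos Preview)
Your proof is correct and follows essentially the same approach as the paper: both parts rest on the pointwise bound from Theorem~\ref{thm:avg-uniform-approx}, integrated over $V\times V$ for the norm and over the diagonal for the trace, using Mercer's theorem for $\tr A$ and the finite-rank structure for $\tr A\avg\Pcal$. Your block-by-block summation for the trace is an unnecessary decomposition---the paper just integrates $|A(x,x)-(A\avg\Pcal)(x,x)|<\epsilon$ directly over $V$---but the argument is the same.
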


\begin{proof}
Using Theorem~\ref{thm:avg-uniform-approx} we get
\[
\|A - A\avg\Pcal\|^2 = \int_V \int_V (A(x, y) - (A\avg\Pcal)(x,
y))^2\, d\omega(y) d\omega(x) < \epsilon^2,
\]
as desired.

Since~$A$ is positive and continuous, Mercer's theorem implies that
the trace of~$A$ is the integral over the diagonal. Since~$A\avg\Pcal$
is a finite-rank step kernel, its trace is also the integral over the
diagonal. Then, using Theorem~\ref{thm:avg-uniform-approx},
\[
\begin{split}
|\tr A - \tr A\avg\Pcal| &= \biggl| \int_V A(x, x) - (A\avg\Pcal)(x,
x)\, d\omega(x)\biggr|\\
&\leq \int_V |A(x, x) - (A\avg\Pcal)(x, x)|\, d\omega(x)\\
&<\epsilon,
\end{split}
\]
as we wanted.
\end{proof}

A continuous kernel~$A\colon V \times V \to \R$ is positive if and
only if the matrix~$A[U]$ is positive semidefinite for all
finite~$U \subseteq V$ (cf.~Bochner~\cite{Bochner1941}). An analogous
result holds for~$\Ccal(V)$ and its dual; see also Lemma~2.1 of Dobre,
Dür, Frerick, and Vallentin~\cite{DobreDFV2016}.

\begin{theorem}
\label{thm:finite-subkernel-condition}
A continuous kernel~$A\colon V \times V \to \R$ belongs to~$\Ccal(V)$
if and only if~$A[U]$ belongs to~$\Ccal(U)$ for all
finite~$U \subseteq V$, where we consider for~$U$ the discrete
topology and the counting measure. Likewise, a continuous
$Z\colon V \times V \to \R$ belongs to~$\Ccal^*(V)$ if and only
if~$Z[U]$ belongs to~$\Ccal^*(U)$ for all finite~$U \subseteq V$.
\end{theorem}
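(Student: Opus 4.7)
Both implications can be treated in parallel for the two cones; the forward direction comes from averaging against a partition that separates~$U$, and the reverse direction from duality together with a careful approximation that routes the inner product through finite matrices.

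For the forward direction, fix finite $U \subseteq V$ and $\epsilon > 0$. By Theorem~\ref{thm:partition-variation}, choose an $\omega$-partition~$\Pcal$ separating~$U$ and over which~$A$ (respectively~$Z$) varies less than~$\epsilon$. Theorem~\ref{thm:avg-cp-cop-relation} gives $A\avg\Pcal \in \Ccal(\Pcal)$ (respectively $Z\avg\Pcal \in \Ccal^*(\Pcal)$) with the counting measure on~$\Pcal$. Since~$\Pcal$ separates~$U$, restricting to the blocks~$X_u \ni u$ for $u \in U$ produces a principal submatrix, and both~$\Ccal$ and~$\Ccal^*$ on finite index sets are closed under principal submatrices (for~$\Ccal^*$, extend any nonnegative test function by zero). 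By Theorem~\ref{thm:avg-uniform-approx} this submatrix agrees with~$A[U]$ (respectively $Z[U]$) entrywise up to~$\epsilon$, so closedness of the finite-dimensional cones~$\Ccal(U)$ and~$\Ccal^*(U)$ yields $A[U] \in \Ccal(U)$ and $Z[U] \in \Ccal^*(U)$ upon letting $\epsilon \to 0$.

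For the reverse direction in the completely-positive case I would use the bipolar identity $(\Ccal^*(V))^* = \Ccal(V)$ recalled earlier and prove $\langle Z, A\rangle \geq 0$ for each $Z \in \Ccal^*(V)$. Given $\epsilon > 0$ take an $\omega$-partition~$\Pcal$ over which~$A$ varies less than~$\epsilon$; pick $x_X \in X$ for each~$X$ and set $U = \{x_X : X \in \Pcal\}$ and $M(X,Y) = A(x_X, x_Y)$, so that $M \in \Ccal(\Pcal)$ by hypothesis. The averaging identities give
\[
\langle Z, A\rangle = \langle Z\avg\Pcal, A\avg\Pcal\rangle + \langle Z, A - A\avg\Pcal\rangle,
\]
and the two errors arising from (i) Corollary~\ref{cor:avg-trace-norm-approx} with Cauchy--Schwarz and (ii) replacing $A\avg\Pcal$ by $M$ via Theorem~\ref{thm:avg-uniform-approx} are each at most $\epsilon\|Z\|$. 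The surviving main term
\[
S = \sum_{X,Y \in \Pcal}(Z\avg\Pcal)(X,Y)\,\omega(X)\omega(Y)\,M(X,Y)
\]
is nonnegative because $Z\avg\Pcal \in \Ccal^*(\Pcal)$ by Theorem~\ref{thm:avg-cp-cop-relation}, its Hadamard product with the rank-one nonnegative matrix $w \otimes w^*$ with $w(X) = \omega(X)$ remains in $\Ccal^*(\Pcal)$ (a one-line repetition of Theorem~\ref{thm:cp-cop-basic}(ii)), and $M \in \Ccal(\Pcal)$; letting $\epsilon \to 0$ closes the argument.

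The reverse direction for~$\Ccal^*$ is easier since~$Z$ is continuous by hypothesis: for $f \in L^2(V)$ with $f \geq 0$ and $\epsilon > 0$, take a partition~$\Pcal$ over which~$Z$ varies less than~$\epsilon$ and representatives $x_X \in X$. Splitting the integral over blocks and replacing $Z(x,y)$ by $Z(x_X, x_Y)$ gives
\[
\langle Z, f \otimes f^*\rangle = \sum_{X,Y \in \Pcal} Z(x_X, x_Y)\biggl(\int_X f\,d\omega\biggr)\biggl(\int_Y f\,d\omega\biggr) + E,
\]
with $|E| \leq \epsilon\|f\|_1^2 \leq \epsilon\|f\|_2^2$ (since $\omega(V)=1$); the main sum is nonnegative because $(Z(x_X,x_Y))_{X,Y}$ is the matrix $Z[U] \in \Ccal^*(\Pcal)$ by hypothesis, paired with the nonnegative vector $(\int_X f)_{X \in \Pcal}$. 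The most delicate step in all of this is the Hadamard-product maneuver in the $\Ccal$ reverse direction: averaging naturally yields matrices on~$\Pcal$ with counting measure, whereas $\langle Z, A\rangle$ carries the weights $\omega(X)\omega(Y)$, and these must be absorbed in a way that keeps copositivity intact.
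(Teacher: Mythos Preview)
Your proposal is correct and follows essentially the same route as the paper's proof. The forward direction is identical; for the reverse direction in the completely-positive case you argue directly that $\langle Z,A\rangle\geq 0$ for every $Z\in\Ccal^*(V)$, whereas the paper argues the contrapositive by exhibiting, from a separating $Z$, a finite $U$ and a witness $Z'[U]\in\Ccal^*(U)$ with $\sum_{x,y\in U} A(x,y)Z'(x,y)<0$---but the mechanics (averaging against $\Pcal$, absorbing the weights $\omega(X)\omega(Y)$ via the Hadamard product with $g\otimes g^*$ to preserve copositivity, and controlling the residual by $\epsilon\|Z\|_1$ using $\|Z\avg\Pcal\|_1\leq\|Z\|_1$) are the same in both.
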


\begin{proof}
Take~$A \in \Ccal(V)$ and let~$U \subseteq V$ be finite.  For~$n \geq
1$, let~$\Pcal_n$ be an $\omega$-partition that separates~$U$ and over
which~$A$ varies less than~$1/n$, as given by
Theorem~\ref{thm:partition-variation}.  Since~$A\avg\Pcal_n \in
\Ccal(\Pcal_n)$ and~$\Pcal_n$ separates~$U$,
Theorem~\ref{thm:avg-cp-cop-relation} implies that $(A\avg\Pcal_n)[U]
\in \Ccal(U)$ for all~$n \geq 1$; Theorem~\ref{thm:avg-uniform-approx}
implies that~$A[U]$ is the limit, in the norm topology,
of~$((A\avg\Pcal_n)[U])$, so~$A[U] \in \Ccal(U)$. One proves similarly
that if~$Z \in \Ccal^*(V)$, then~$Z[U] \in \Ccal^*(U)$ for all
finite~$U \subseteq V$.

Now let~$A\colon V \times V \to \R$ be a continuous kernel such
that~$A \notin \Ccal(V)$. Let us show that there is a finite set~$U
\subseteq V$ such that~$A[U] \notin \Ccal(U)$. If~$A$ is not
symmetric, we are done. So assume~$A$ is symmetric and let~$Z \in
\Ccal^*(V)$ be such that~$\langle A, Z \rangle = \delta < 0$.

Corollary~\ref{cor:avg-trace-norm-approx} together with the
Cauchy-Schwarz inequality implies that, if~$A$ varies less
than~$\epsilon$ over an $\omega$-partition~$\Pcal$, then $|\langle A,
Z \rangle - \langle A\avg\Pcal, Z\rangle| < \epsilon \|Z\|$. So, for
all small enough~$\epsilon$, if~$A$ varies less than~$\epsilon$ over
the $\omega$-partition~$\Pcal$, then
\begin{equation}
\label{eq:finite-subkernel-i}
\delta / 2 > \langle A\avg\Pcal, Z\rangle = \langle A\avg\Pcal,
Z\avg\Pcal\rangle = \sum_{X, Y \in \Pcal} (A\avg\Pcal)(X, Y)
(Z\avg\Pcal)(X, Y) \omega(X) \omega(Y).
\end{equation}

Let~$g \in L^\infty(V)$ be the function such that~$g(x) = \omega(X)$
for~$X \in \Pcal$ and~$x \in X$. Theorems~\ref{thm:cp-cop-basic}
and~\ref{thm:avg-cp-cop-relation} say
that~$Z' = (Z\avg\Pcal) \odot (g \otimes g^*) \in \Ccal^*(V)$.
For~$x$, $y \in V$, write~$s(x, y) = \sgn Z'(x, y)$.
Let~$U \subseteq V$ be a set of representatives of the parts
of~$\Pcal$. Develop~\eqref{eq:finite-subkernel-i} using
Theorem~\ref{thm:avg-uniform-approx} to obtain
\begin{equation}
\label{eq:finite-subkernel-ii}
\begin{split}
\delta / 2 &> \sum_{x, y \in U} (A\avg\Pcal)(x, y) Z'(x, y)\\
&\geq \sum_{x, y \in U} (A(x, y) - s(x, y)\epsilon) Z'(x, y)\\
&=\sum_{x, y \in U} A(x, y) Z'(x, y) - \epsilon \sum_{x, y \in U} s(x,
y) Z'(x, y).
\end{split}
\end{equation}

Now notice that, if~$\Pcal$ is an $\omega$-partition,
then~$\|Z\avg\Pcal\|_1 \leq \|Z\|_1$. So
\[
\sum_{x,y \in U} s(x, y) Z'(x, y) = \|Z\avg\Pcal\|_1 \leq \|Z\|_1.
\]
Together with~\eqref{eq:finite-subkernel-ii} this gives
\[
\sum_{x, y \in U} A(x, y) Z'(x, y) < \delta / 2 + \epsilon \|Z\|_1.
\]
Since~$U$ is a set of representatives of the parts of~$\Pcal$,
Theorem~\ref{thm:avg-cp-cop-relation} says~$Z'[U] \in
\Ccal^*(U)$. Since~$\|Z\|_1 < \infty$ (as~$\omega$ is finite,
$L^2(V \times V) \subseteq L^1(V \times V)$), by taking~$\epsilon$
sufficiently small we see that~$A[U] \notin \Ccal(U)$, as we wanted.

The analogous result for~$\Ccal^*(V)$ can be similarly proved.
\end{proof}

Using Theorem~\ref{thm:finite-subkernel-condition}, we can rewrite
problem~$\vartheta(G, \Ccal(V))$ (see~\eqref{eq:theta-problem}) by
replacing the constraint~``$A \in \Ccal(V)$'' by infinitely many
constraints on finite subkernels of~$A$.

%=====================================================================

\subsection{The tip of the cone of completely positive kernels}
\label{sec:cp-tip}

A \defi{base} of a cone~$K$ is a set~$B \subseteq K$ that does not
contain the origin and is such that for every nonzero $x \in K$ there
is a unique~$\alpha > 0$ for which~$\alpha^{-1} x \in B$. Cones with
compact and convex bases have many pleasant properties that are
particularly useful to the theory of conic
programming~\cite[Chapter~IV]{Barvinok2002}.

It is not in general clear whether~$\Ccal(V)$ has a compact and convex
base, however the following subset of~$\Ccal(V)$ --- its \defi{tip}
--- will be just as useful in the coming developments:
\[
\Tcal(V) = \cch\{\, f \otimes f^* : \text{$f \in L^2(V)$, $f \geq 0$,
  and~$\|f\| \leq 1$}\,\},
\]
where~$\cch X$ is the closure of the convex hull of~$X$. Notice the
closure is the same whether taken in the norm or the weak topology.

If~$\|f\| \leq 1$, then~$\|f\otimes f^*\| = \|f\|^2 \leq 1$,
so~$\Tcal(V)$ is a closed subset of the closed unit ball
in~$L^2(V \times V)$, and hence by Alaoglu's
theorem~\cite[Theorem~5.18]{Folland1999} it is weakly
compact. If~$L^2(V \times V)$ is separable, then the weak topology on
the closed unit ball of~$L^2(V \times V)$, and hence the weak topology
on~$\Tcal(V)$, is metrizable~\cite[p.~171, Exercise~50]{Folland1999}.

The tip displays a key property of a base, at least for continuous
kernels.

\begin{theorem}
\label{thm:tip-property}
If~$A \in \Ccal(V)$ is nonzero and continuous, then~$(\tr A)^{-1} A
\in \Tcal(V)$.
\end{theorem}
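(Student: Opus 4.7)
The plan is to approximate the continuous kernel $A$ by its averages $A\avg\Pcal_n$ over progressively finer $\omega$-partitions, use Corollary~\ref{cor:avg-finite-rank-decomposition} to express each average as a finite sum of rank-one completely-positive kernels, rescale these sums to obtain convex combinations in the set defining $\Tcal(V)$, and then pass to the limit in the norm topology, in which $\Tcal(V)$ is closed.

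First I would verify $\tr A > 0$. Since $A \in \Ccal(V) \subseteq \psd(V)$, the Cauchy--Schwarz inequality for positive kernels gives $|A(x,y)|^2 \leq A(x,x)\,A(y,y)$, so $\tr A = \int_V A(x,x)\,d\omega(x) = 0$ would force $A(x,x) \equiv 0$ by continuity (every open set has positive measure) and thus $A \equiv 0$, contradicting our hypothesis. By Mercer's theorem the trace of $A$ equals the integral over the diagonal, so this really is the quantity we want to invert.

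Next, applying Theorem~\ref{thm:partition-variation} with $U = \emptyset$, for each $n \geq 1$ I would pick an $\omega$-partition $\Pcal_n$ over which $A$ varies less than $1/n$, and set $A_n = A \avg \Pcal_n$. By Corollary~\ref{cor:avg-finite-rank-decomposition} there exist nonnegative, nonzero $f_{n,1},\dots,f_{n,k_n} \in L^2(V)$ with
\[
A_n = f_{n,1} \otimes f_{n,1}^* + \cdots + f_{n,k_n} \otimes f_{n,k_n}^*.
\]
Writing $g_{n,i} = \|f_{n,i}\|^{-1} f_{n,i}$, so that $\|g_{n,i}\| = 1$, this becomes
\[
A_n = \sum_{i=1}^{k_n} \|f_{n,i}\|^2\, g_{n,i} \otimes g_{n,i}^*.
\]
Each summand $g_{n,i} \otimes g_{n,i}^*$ has trace $1$, and $A_n$ is a positive step kernel whose trace coincides with the integral over the diagonal, so $\tr A_n = \sum_i \|f_{n,i}\|^2$. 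Corollary~\ref{cor:avg-trace-norm-approx} gives $\tr A_n \to \tr A > 0$, hence for all sufficiently large $n$ we have $\tr A_n > 0$ and
\[
(\tr A_n)^{-1} A_n = \sum_{i=1}^{k_n} \frac{\|f_{n,i}\|^2}{\tr A_n}\, g_{n,i} \otimes g_{n,i}^*
\]
is a convex combination of kernels $g_{n,i} \otimes g_{n,i}^*$ with $g_{n,i} \geq 0$ and $\|g_{n,i}\| \leq 1$, so it lies in the convex hull used to define $\Tcal(V)$.

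Finally, passing to the limit: by Corollary~\ref{cor:avg-trace-norm-approx}, $\|A_n - A\| < 1/n$, so
\[
\bigl\| (\tr A_n)^{-1} A_n - (\tr A)^{-1} A \bigr\|
\leq (\tr A_n)^{-1}\|A_n - A\| + \bigl|(\tr A_n)^{-1} - (\tr A)^{-1}\bigr|\,\|A\| \;\longrightarrow\; 0,
\]
since $\tr A_n \to \tr A > 0$. Because $\Tcal(V)$ is defined as a closure, it contains this norm limit, giving $(\tr A)^{-1} A \in \Tcal(V)$. The argument is essentially routine once the finite-rank approximation is in hand; the only mildly delicate point is ensuring $\tr A > 0$ and $\tr A_n > 0$ before inverting, which we have already arranged.
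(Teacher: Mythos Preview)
Your proof is correct and follows essentially the same approach as the paper's: approximate $A$ by its averages $A \avg \Pcal_n$, decompose each via Corollary~\ref{cor:avg-finite-rank-decomposition} into nonnegative rank-one kernels, normalize to exhibit $(\tr A_n)^{-1} A_n$ as a convex combination lying in $\Tcal(V)$, and pass to the norm limit using Corollary~\ref{cor:avg-trace-norm-approx}. You additionally spell out the verification that $\tr A > 0$, which the paper leaves implicit.
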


\begin{proof}
For~$n \geq 1$, let~$\Pcal_n$ be an $\omega$-partition over which~$A$
varies less than~$1/n$. For each~$n \geq 1$, use
Corollary~\ref{cor:avg-finite-rank-decomposition} to write
\[
A\avg\Pcal_n = \sum_{m=1}^{r_n} \alpha_{mn} f_{mn} \otimes f_{mn}^*,
\]
where~$\alpha_{mn} \geq 0$, $f_{mn} \geq 0$, and~$\|f_{mn}\| = 1$.

The kernel~$A$ is in~$\Ccal(V)$ and hence positive, so using
Corollary~\ref{cor:avg-trace-norm-approx} we have
\[
\lim_{n\to\infty} (\tr A\avg\Pcal_n)^{-1} A\avg\Pcal_n = (\tr A)^{-1} A
\]
in the norm topology.
Now~$\tr A\avg\Pcal_n = \sum_{m=1}^{r_n} \alpha_{mn} > 0$ for all
large enough~$n$, and
then~$(\tr A\avg\Pcal_n)^{-1} A\avg\Pcal_n \in \Tcal(V)$ for all large
enough~$n$, proving the theorem.
\end{proof}

Finally, we also know how the extreme points of~$\Tcal(V)$ look like.

\begin{theorem}
\label{thm:tip-extreme-points}
An extreme point of~$\Tcal(V)$ is either~0 or of the form~$f \otimes
f^*$ for $f \in L^2(V)$ with~$f \geq 0$ and~$\|f\| = 1$.
\end{theorem}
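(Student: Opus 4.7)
The plan is to invoke Milman's converse to the Krein--Milman theorem. Write
\[
  G = \{\, f \otimes f^* : f \in L^2(V),\ f \geq 0,\ \|f\| \leq 1\,\},
\]
so that by definition $\Tcal(V) = \cch(G)$, with the closure taken (equivalently) in the weak or norm topology on $\Lsym^2(V \times V)$. Since $\Tcal(V)$ is weakly compact and convex, Milman's theorem says that every extreme point of $\Tcal(V)$ lies in the weak closure of $G$. The proof therefore reduces to two tasks: first, show that $G$ is already weakly closed; second, identify within $G$ which points can possibly be extreme in $\Tcal(V)$.

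For the first task, I take a net $(f_\alpha \otimes f_\alpha^*)$ in $G$ converging weakly to some $A \in \Lsym^2(V \times V)$. Since the $f_\alpha$ lie in the closed unit ball of $L^2(V)$, which is weakly compact by Alaoglu, after passing to a subnet I may assume $f_\alpha \to f$ weakly for some $f$ with $\|f\| \leq 1$; as the nonnegative cone is convex and norm-closed, hence weakly closed, also $f \geq 0$. The crux is then to show $f_\alpha \otimes f_\alpha^* \to f \otimes f^*$ weakly along this subnet, which forces $A = f \otimes f^* \in G$. For any $h \in \Lsym^2(V \times V)$, the associated integral operator $T_h$ on $L^2(V)$ is Hilbert--Schmidt and hence compact, so it maps the weakly convergent net $(f_\alpha)$ to a norm convergent net $T_h f_\alpha \to T_h f$. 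Writing
\[
  \langle f_\alpha \otimes f_\alpha^*, h\rangle = (f_\alpha, T_h f_\alpha) = (f_\alpha - f, T_h f_\alpha) + (f, T_h f_\alpha),
\]
and using that the pairing of a bounded weakly convergent net with a norm convergent net behaves continuously, I obtain $(f_\alpha, T_h f_\alpha) \to (f, T_h f) = \langle f \otimes f^*, h\rangle$. This step is the main obstacle: weak convergence of $f_\alpha$ to $f$ does not in general yield any convergence of the pointwise products $f_\alpha(x) f_\alpha(y)$, and it is exactly the compactness of the Hilbert--Schmidt operators $T_h$ that rescues the argument.

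For the second task, suppose $f \otimes f^* \in G$ with $0 < \|f\| < 1$. Setting $g = f/\|f\|$ gives $g \otimes g^* \in \Tcal(V)$ and $0 \in \Tcal(V)$, together with the nontrivial convex decomposition
\[
  f \otimes f^* = \|f\|^2 \, (g \otimes g^*) + (1 - \|f\|^2) \cdot 0,
\]
in which both summands differ from $f \otimes f^*$ (the first by squared norm, since $1 \neq \|f\|^2$, the second by being zero). Hence no such $f \otimes f^*$ is extreme, leaving only $0$ and the tensors $f \otimes f^*$ with $f \geq 0$ and $\|f\| = 1$ as possible extreme points of $\Tcal(V)$, which is precisely the claim.
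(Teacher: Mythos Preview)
Your proof is correct and follows the same overall architecture as the paper: invoke Milman's theorem after showing the generating set $G$ is weakly closed, then rule out tensors $f\otimes f^*$ with $0<\|f\|<1$ via the obvious convex decomposition with~$0$ and~$\|f\|^{-2}f\otimes f^*$.

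Where you differ is in the technical verification that $G$ is weakly closed. The paper writes the test kernel $G$ (their notation) in its spectral decomposition $\sum_i\lambda_i g_i\otimes g_i^*$, truncates to a finite-rank $G_\epsilon$ with $\|G-G_\epsilon\|<\epsilon$, observes that $\langle f_\alpha\otimes f_\alpha^*, g\otimes g^*\rangle=(f_\alpha,g)^2\to(f,g)^2$ for each rank-one piece, and then passes from $G_\epsilon$ to~$G$ via the uniform approximation. You instead package this in one stroke: the integral operator $T_h$ associated to any $h\in\Lsym^2(V\times V)$ is Hilbert--Schmidt and hence compact, so it sends the bounded weakly convergent net $(f_\alpha)$ to a \emph{norm} convergent net, after which $(f_\alpha,T_hf_\alpha)\to(f,T_hf)$ follows from the decomposition you wrote. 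Your route is shorter and avoids invoking the spectral theorem explicitly; the paper's route is more hands-on and makes the finite-rank approximation (which is after all what underlies compactness of Hilbert--Schmidt operators) visible. Both are perfectly valid, and yours is arguably the cleaner presentation.

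One small remark: compact operators send bounded weakly convergent \emph{nets} (not just sequences) to norm convergent nets---this is true, but is a hair stronger than the usual ``completely continuous'' statement for sequences, and relies on the full definition of compactness (bounded sets go to relatively norm-compact sets, so any subnet staying $\epsilon$-away from $T_hf$ would have a further norm-convergent subnet with the wrong limit). You use this without comment; it is fine, but worth being aware of.
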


\begin{proof}
We show first that the set~$\Bcal = \{\, f \otimes f^* :
\text{$f \in L^2(V)$, $f \geq 0$, and~$\|f\| \leq 1$}\,\}$ is weakly
closed. Then, since~$\Tcal(V)$ is weakly compact and convex and since
the weak topology is locally convex, it will follow from Milman's
theorem~\cite[Theorem~9.4]{Simon2011} that all extreme points
of~$\Tcal(V)$ are contained in~$\Bcal$.

Let $(f_\alpha \otimes f_\alpha^*)$ be a weakly-converging net
with~$f_\alpha \in L^2(V)$, $f_\alpha \geq 0$,
and~$\|f_\alpha\| \leq 1$ for all~$\alpha$. The net~$(f_\alpha)$ lies
in the closed unit ball, which is weakly compact, and hence it has a
weakly-converging subnet. So we may assume that the
net~$(f_\alpha)$ is itself weakly converging; let~$f$ be its limit.

Immediately we have~$f \geq 0$ and~$\|f\| \leq 1$. Claim:
$f \otimes f^*$ is the limit of~$(f_\alpha \otimes
f_\alpha^*)$. Proof: We have to show that,
if~$G \in L^2(V \times V)$, then
\[%begin{equation}
%  \abel{eq:G-conv}
\langle f_\alpha \otimes f_\alpha^*, G \rangle \to \langle f
\otimes f^*, G \rangle.
\]%end{equation}

Let~$S$ be a complete orthonormal system of~$L^2(V)$;
then~$\{\, g \otimes h^* : \text{$g$, $h \in S$}\,\}$ is a complete
orthonormal system of~$L^2(V \times V)$.
Given~$G \in L^2(V \times V)$, write
\[
  G = \sum_{i=1}^\infty \lambda_i g_i \otimes h_i^*,
\] 
where~$g_i$, $h_i \in S$ and~$\sum_{i=1}^\infty \lambda_i^2 =
\|G\|^2$. For every~$\epsilon > 0$, let~$N_\epsilon$ be such that the
finite-rank kernel
\[
  G_\epsilon = \sum_{i=1}^{N_\epsilon} \lambda_i g_i \otimes h_i^*
\]
satisfies~$\|G - G_\epsilon\| < \epsilon$. Apply the Cauchy-Schwarz
inequality to get
\begin{equation}
  \label{eq:cauchy-kernel}
  |\langle g \otimes h^*, G\rangle - \langle g \otimes h^*,
  G_\epsilon\rangle| < \epsilon
\end{equation}
for every~$g$, $h \in L^2(V)$ with~$\|g\| = \|h\| \leq 1$.

Since~$f$ is the weak limit of~$(f_\alpha)$, for~$g$, $h \in L^2(V)$
we have
\[
  \langle f_\alpha \otimes f_\alpha^*, g \otimes h^*\rangle =
  (f_\alpha, g) (f_\alpha, h) \to (f, g) (f, h) = \langle f \otimes
  f^*, g \otimes h^*\rangle.
\]
Now,~$G_\epsilon$ has finite rank for every~$\epsilon > 0$, so we must
have
\[
  \langle f_\alpha \otimes f_\alpha^*, G_\epsilon\rangle \to \langle f
  \otimes f^*, G_\epsilon\rangle
\]
and, together with~\eqref{eq:cauchy-kernel}, it follows that~$\Bcal$
is weakly closed.

Now we only have to argue that~$f \otimes f^*$ for~$f \geq 0$ is an
extreme point if and only if~$f = 0$ or~$\|f\| = 1$. First, if~$0 <
\|f\| < 1$, then~$f \otimes f^*$ is a convex combination of~$0$
and~$\|f\|^{-2} f \otimes f^*$, and hence not an extreme point.

Conversely,~$0$ is clearly not a convex combination of nonzero points,
and hence it is an extreme point. Moreover, if~$\|f\| = 1$,
then~$\|f \otimes f^*\| = 1$. Now, by the Cauchy-Schwarz inequality,
it is impossible for a vector of norm~1 in~$L^2$ to be a nontrivial
convex combination of other vectors of norm~1, so~$f \otimes f^*$ is
an extreme point.
\end{proof}

%%%%%%%%%%%%%%%%%%%%%%%%%%%%%%%%%%%%%%%%%%%%%%%%%%%%%%%%%%%%%%%%%%%%%%

\section{When is the completely positive formulation exact?}
\label{sec:exactness-theorem}

{\sl Throughout this section, the Haar measure on a compact group will
always be normalized so the group has total measure~$1$.}

When is~$\vartheta(G, \Ccal(V)) = \alpha_\omega(G)$? When~$G$ is a
finite graph and~$\omega$ is the counting measure, equality holds, as
we saw in the introduction. In the finite case, actually, equality
holds irrespective of the measure. In this section, we will see some
sufficient conditions on~$G$ and~$\omega$ under
which~$\vartheta(G, \Ccal(V)) = \alpha_\omega(G)$; these conditions
will be satisfied by the main examples of infinite graphs considered
here.

Let~$G = (V, E)$ be a topological graph. An \defi{automorphism} of~$G$
is a homeomorphism~$\sigma\colon V \to V$ such that~$(x, y) \in E$ if
and only if~$(\sigma x, \sigma y) \in E$. Denote by~$\aut(G)$ the set
of all automorphisms of~$G$, which is a group under function
composition.

Say~$V$ is a set and~$\Gamma$ a group that acts on~$V$. We say
that~$\Gamma$ \defi{acts continuously} on~$V$ if
\begin{enumerate}
\item[(i)] for every~$\sigma \in \Gamma$, the map~$x\mapsto \sigma x$
  from~$V$ to~$V$ is continuous and

\item[(ii)] for every~$x \in V$, the map~$\sigma \mapsto \sigma x$
  from~$\Gamma$ to~$V$ is continuous.
\end{enumerate}
We say that $\Gamma$ \defi{acts transitively} on~$V$ if for all~$x$,
$y \in V$ there is~$\sigma \in \Gamma$ such that~$\sigma x = y$.

Assume that~$\Gamma$ is compact and that it acts continuously and
transitively on~$V$ and let~$\mu$ be its Haar measure.  Fix~$x \in V$
and consider the function~$p\colon\Gamma\to V$ such that~$p(\sigma) =
\sigma x$. The \defi{pushforward} of~$\mu$ is the measure~$\omega$
on~$V$ defined as follows: a set~$X \subseteq V$ is measurable
if~$p^{-1}(X)$ is measurable and its measure is~$\omega(X) =
\mu(p^{-1}(X))$. The pushforward is a Borel measure; moreover,
since~$\Gamma$ acts transitively and since~$\mu$ is invariant, it is
independent of the choice of~$x$. The pushforward is also invariant
under the action of~$\Gamma$, that is, if~$X \subseteq V$ and~$\sigma
\in \Gamma$, then
\[
\omega(\sigma X) = \omega(\{\, \sigma x : x \in X\,\}) = \omega(X).
\]

Let~$V$ be a metric space with metric~$d$ and~$\omega$ be a Borel
measure on~$V$ such that every open set has positive measure. A
point~$x$ in a measurable set~$S \subseteq V$ is a \defi{density
  point} of~$S$ if
\[
\lim_{\delta\downarrow 0} \frac{\omega(S \cap B(x, \delta))}{\omega(B(x,
  \delta))} = 1.
\]
We say that the metric~$d$ is a \defi{density metric} for~$\omega$ if
for every measurable set~$S \subseteq V$ the set of all density points
of~$S$ has the same measure as~$S$, that is, almost all points of~$S$
are density points. For example, Lebesgue's density theorem states
that the Euclidean metric on~$\R^n$ is a density metric for the
Lebesgue measure.

We now come to the main theorem of the paper.

\begin{theorem}
\label{thm:cp-exactness}
Let~$G = (V, E)$ be a locally independent graph where~$V$ is a compact
Hausdorff space, $\Gamma \subseteq \aut(G)$ be a compact group that
acts continuously and transitively on~$V$, and~$\omega$ be a multiple
of the pushforward of the Haar measure on~$\Gamma$. If~$\Gamma$ is
metrizable via a bi-invariant density metric for the Haar measure,
then~$\vartheta(G, \Ccal(V)) = \alpha_\omega(G)$.
\end{theorem}

Here, a \defi{bi-invariant metric} on~$\Gamma$ is a metric~$d$ such
that for all~$\lambda$, $\gamma$, $\sigma$, $\tau \in \Gamma$ we have
$d(\lambda \sigma \gamma, \lambda \tau \gamma) = d(\sigma, \tau)$.

Theorem~\ref{thm:cp-exactness} implies for instance that
\[
\vartheta(G(S^{n-1}, \{\theta\}), \Ccal(S^{n-1})) =
\alpha_\omega(G(S^{n-1}, \{\theta\}))
\]
for every angle~$\theta > 0$. Indeed, $G(S^{n-1}, \{\theta\})$ is a
locally independent graph. For~$\Gamma$ we take the orthogonal
group~$\orto(n)$; this group acts continuously and transitively
on~$S^{n-1}$ and the surface measure on the sphere is a multiple of
the pushforward of the Haar
measure~\cite[Theorem~3.7]{Mattila1995}. The metric
on~$\orto(n) \subseteq \R^{n \times n}$ inherited from the Euclidean
metric is bi-invariant and is moreover a density metric
since~$\orto(n)$ is a Riemannian manifold~\cite{Federer1969}. More
generally, any compact Lie group is metrizable via a bi-invariant
metric~\cite[Corollary~1.4]{Milnor1976}.

In the proof of the theorem, the symmetry provided by the
group~$\Gamma$ is used to reduce the problem to an equivalent problem
on a graph over~$\Gamma$, a Cayley graph.

%=====================================================================

\subsection{Cayley graphs}
\label{sec:cayley-graphs}

Let~$\Gamma$ be a topological group with identity~1
and~$\Sigma \subseteq \Gamma$ be such that~$1 \notin \Sigma$ and
$\Sigma^{-1} = \{\, \sigma^{-1} : \sigma \in \Sigma\,\} = \Sigma$.
Consider the graph whose vertex set is~$\Gamma$ and in which~$\sigma$,
$\tau \in \Gamma$ are adjacent if and only
if~$\sigma^{-1} \tau \in \Sigma$ (which happens,
since~$\Sigma^{-1} = \Sigma$, if and only
if~$\tau^{-1}\sigma \in \Sigma$). This is the \defi{Cayley graph}
over~$\Gamma$ with \defi{connection set}~$\Sigma$; it is denoted
by~$\cayley(\Gamma, \Sigma)$. Note that~$\Gamma$ acts on itself
continuously and transitively and that left multiplication by an
element of~$\Gamma$ is an automorphism of the Cayley graph.

We will use the following construction to relate a vertex-transitive
graph to a Cayley graph over any transitive subgroup of its
automorphism group. Let~$G = (V, E)$ be a topological graph
and~$\Gamma \subseteq \aut(G)$ be a group that acts transitively
on~$V$. Fix~$x_0 \in V$ and
set~$\Sigma_{G,x_0} = \{\, \sigma \in \Gamma : (\sigma x_0, x_0) \in
E\,\}$. Since~$\Gamma \subseteq \aut(G)$, we
have~$\Sigma_{G,x_0}^{-1} = \Sigma_{G,x_0}$.

\begin{lemma}
\label{thm:cayley-vertex-transitive-equiv}
If~$G = (V, E)$ is a locally independent graph and
if~$\Gamma \subseteq \aut(G)$ is a topological group that acts
continuously and transitively on~$V$,
then~$\cayley(\Gamma, \Sigma_{G,x_0})$ is locally independent for
all~$x_0 \in V$. If moreover~$\omega$ is a multiple of the pushforward
of the Haar measure~$\mu$ on~$\Gamma$, then for every~$M \geq 0$ the
graph~$G$ has a measurable independent set of measure at least~$M$ if
and only if $\cayley(\Gamma, \Sigma_{G,x_0})$ has a measurable
independent set of measure at least~$M / \omega(V)$; in particular,
\[
\alpha_\mu(\cayley(\Gamma, \Sigma_{G,x_0})) = \alpha_\omega(G) /
\omega(V)
\]
for all~$x_0 \in V$.
\end{lemma}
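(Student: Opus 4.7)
The plan is to set up a correspondence between independent sets in $G$ and in $\cayley(\Gamma, \Sigma_{G, x_0})$ via the orbit map $p\colon \Gamma \to V$, $p(\sigma) = \sigma x_0$. The computational workhorse is the identity
\[
 \sigma^{-1}\tau \in \Sigma_{G,x_0} \iff (\tau x_0, \sigma x_0) \in E,
\]
which holds because $\sigma \in \Gamma \subseteq \aut(G)$ is an automorphism; in particular the stabilizer $H = \{\sigma : \sigma x_0 = x_0\}$ is disjoint from $\Sigma_{G,x_0}$, since $G$ has no loops.

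For local independence, I would take a compact independent $K \subseteq \Gamma$ in the Cayley graph, observe that $p(K)$ is compact and independent in $G$ by the above equivalence, apply local independence of $G$ to obtain an open independent $S \supseteq p(K)$, and then verify that $p^{-1}(S)$ is an open independent set in the Cayley graph containing $K$ (using the equivalence again, together with the disjointness $H \cap \Sigma_{G,x_0} = \emptyset$ to handle pairs in the same $p$-fiber).

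For the measure equivalence, the direction ``$G \Rightarrow$ Cayley'' is immediate: given a measurable independent $I \subseteq V$, the set $J = p^{-1}(I)$ is measurable with $\mu(J) = \omega(I)/\omega(V)$ by the definition of pushforward, and $J$ is independent in the Cayley graph by the identity above. For the reverse direction, I would fix a Borel section $s\colon V \to \Gamma$ of $p$ and, for each $\gamma \in \Gamma$, set
\[
I_\gamma = \{\, v \in V : \gamma s(v) \in J\,\}.
\]
Each $I_\gamma$ is measurable (as $v \mapsto \gamma s(v)$ is Borel) and independent in $G$: for distinct $v_1, v_2 \in I_\gamma$ the elements $\gamma s(v_1), \gamma s(v_2)$ lie in $J$ and are distinct, so $s(v_1)^{-1} s(v_2) = (\gamma s(v_1))^{-1}(\gamma s(v_2)) \notin \Sigma_{G,x_0}$, which by the equivalence gives $(v_2, v_1) \notin E$. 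Fubini and left-invariance of $\mu$ then yield
\[
\int_\Gamma \omega(I_\gamma)\, d\mu(\gamma) = \int_V \int_\Gamma \chi_J(\gamma s(v))\, d\mu(\gamma)\, d\omega(v) = \mu(J)\, \omega(V),
\]
so some $\gamma$ satisfies $\omega(I_\gamma) \geq \mu(J)\,\omega(V)$. Taking $\mu(J) \geq M/\omega(V)$ gives the desired independent set of measure at least $M$ in $G$, and passing to suprema yields $\alpha_\mu(\cayley(\Gamma, \Sigma_{G,x_0})) = \alpha_\omega(G)/\omega(V)$.

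The only genuinely delicate point is the existence of a Borel section $s$. Since $p$ is the quotient map of $\Gamma$ by the closed subgroup $H$ (under the homeomorphism $\Gamma/H \cong V$ furnished by compactness and continuity of the action), it is continuous and open; in the metrizable setting of Theorem~\ref{thm:cp-exactness} this makes $\Gamma$ and $V$ Polish spaces and the multifunction $v \mapsto p^{-1}(v)$ a closed-valued lower semicontinuous multifunction, so a Borel selection exists by the Kuratowski--Ryll-Nardzewski selection theorem. Everything else is routine bookkeeping organized around the single automorphism identity above.
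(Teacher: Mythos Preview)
Your argument for local independence and for the direction ``$G \Rightarrow$ Cayley'' coincides with the paper's. The difference is in the reverse direction ``Cayley $\Rightarrow$ $G$''. You invoke a Borel section $s\colon V \to \Gamma$ and an averaging argument over $\gamma \in \Gamma$; this is correct but, as you note, the existence of $s$ relies on Polish-space machinery (Kuratowski--Ryll-Nardzewski) and hence on metrizability of $\Gamma$, which the lemma does not assume.

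The paper avoids the section entirely by a much more direct observation: if $J \subseteq \Gamma$ is independent in the Cayley graph, then $p(J) \subseteq V$ is already independent in $G$ (by the same automorphism identity you wrote down), and since $p^{-1}(p(J)) \supseteq J$ one gets $\omega(p(J)) = \mu(p^{-1}(p(J))) \geq \mu(J)$ for free. The only wrinkle is that $p(J)$ need not be measurable for arbitrary measurable $J$; the paper handles this by inner regularity of the Haar measure, replacing $J$ by a $\sigma$-compact subset $C$ of full measure, so that $p(C)$ is a countable union of compact sets and hence Borel. This argument is both shorter and works in the stated generality, with no selection theorem and no extra metrizability hypothesis. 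Your averaging approach, while correct under the added assumption, is doing more work than necessary: the pushforward $p(J)$ already has enough measure, so there is no need to search among the translates $I_\gamma$.
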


\begin{proof}
Independent sets in~$G$ and~$\cayley(\Gamma, \Sigma_{G,x_0})$ are
related: if $p\colon\Gamma \to V$ is the function such
that~$p(\sigma) = \sigma x_0$, then (i)~if~$I\subseteq V$ is
independent, then so is~$p^{-1}(I)$; conversely, (ii)~if~$I \subseteq
\Gamma$ is independent, then so is~$p(I)$.

Let us first prove the second statement of the theorem.  By
normalizing~$\omega$ if necessary, we may assume that~$\omega(V) =
1$. Then~$\omega$ is the pushforward of~$\mu$, and~(i) implies
directly that if~$I \subseteq V$ is a measurable independent set,
then~$p^{-1}(I) \subseteq \Gamma$ is a measurable independent set
with~$\mu(p^{-1}(I)) = \omega(I)$.

Now suppose~$I \subseteq \Gamma$ is a measurable independent set. The
Haar measure is inner regular, meaning that we can take a
sequence~$C_1$, $C_2$, \dots\ of compact subsets of~$I$ such that
$\mu(I \setminus C_n) < 1/n$.  Let~$C$ be the union of
all~$C_n$. Since~$C \subseteq I$, we have that~$C$, and hence~$p(C)$,
are both independent sets. Since~$C_n$ is compact, $p(C_n)$ is also
compact and hence measurable. But then since
\[
p(C) = \bigcup_{n=1}^\infty p(C_n),
\]
it follows that~$p(C)$ is measurable. Finally, $\omega(p(C)) =
\mu(p^{-1}(p(C))) \geq \mu(C) = \mu(I)$, as we wanted.

As for the first statement of the theorem, suppose~$G$ is locally
independent and let~$I \subseteq \Gamma$ be a compact independent
set. The function~$p$ is continuous and hence~$p(I) \subseteq V$ is
compact. Since~$G$ is locally independent and~$p(I)$ is independent,
there is an open independent set~$S$ in~$G$ that contains~$p(I)$. But
then~$p^{-1}(S)$ is an open independent set
in~$\cayley(\Gamma, \Sigma_{G,x_0})$ that contains~$I$, and thus the
Cayley graph is locally independent.
\end{proof}

The theta parameters of~$G$ and any corresponding Cayley graph are
also related:

\begin{lemma}
\label{thm:theta-params-cayley-relation}
If~$G = (V, E)$ is a locally independent graph, if~$\Gamma \subseteq
\aut(G)$ is a compact group that acts continuously and transitively
on~$V$, and if~$\omega$ is a multiple of the pushforward of the Haar
measure~$\mu$ on~$\Gamma$, then
\[\vartheta(G, \Ccal(V)) / \omega(V) \leq
\vartheta(\cayley(\Gamma, \Sigma_{G,x_0}), \Ccal(\Gamma))
\]
for all~$x_0 \in V$.
\end{lemma}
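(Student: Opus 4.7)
The plan is to construct, from any feasible solution $A$ of $\vartheta(G, \Ccal(V))$, a feasible solution of $\vartheta(\cayley(\Gamma, \Sigma_{G,x_0}), \Ccal(\Gamma))$ with objective value $\langle J, A\rangle / \omega(V)$, by pulling $A$ back through the orbit map $p\colon \Gamma \to V$, $p(\sigma) = \sigma x_0$, used in Lemma~\ref{thm:cayley-vertex-transitive-equiv} and rescaling to fix the trace normalization.

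Concretely, set $B(\sigma, \tau) = A(p(\sigma), p(\tau))$. Continuity of $B$ follows from continuity of $p$ (guaranteed by the continuous action of $\Gamma$) and of $A$, and symmetry is immediate. To see that $B \in \Ccal(\Gamma)$, I invoke Theorem~\ref{thm:finite-subkernel-condition}: for any finite $U \subseteq \Gamma$, the finite set $p(U) \subseteq V$ satisfies $A[p(U)] \in \Ccal(p(U))$, so $A[p(U)] = \sum_k v_k \otimes v_k^*$ with nonnegative $v_k$, and then $B[U] = \sum_k (v_k \circ p) \otimes (v_k \circ p)^*$ is a completely-positive decomposition on $U$. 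The edge constraint transfers automatically: if $\sigma^{-1}\tau \in \Sigma_{G,x_0}$ then $(\sigma^{-1}\tau x_0, x_0) \in E$, and applying the automorphism $\sigma \in \Gamma \subseteq \aut(G)$ gives $(\tau x_0, \sigma x_0) \in E$, forcing $B(\sigma, \tau) = 0$.

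It remains to handle the normalizations. Since $\mu$ is a probability measure and $\omega$ is a multiple of its pushforward, $\omega = \omega(V) \cdot p_*\mu$, so for every $\mu$-integrable $h \circ p$,
\[
\int_V h(x)\, d\omega(x) = \omega(V) \int_\Gamma h(p(\sigma))\, d\mu(\sigma).
\]
Applying this to $h(x) = A(x,x)$ yields $\int_\Gamma B(\sigma, \sigma)\, d\mu(\sigma) = 1/\omega(V)$, and applying it twice to the double integral defining $\langle J, A\rangle$ (over $V\times V$) gives $\langle J, B\rangle_\Gamma = \langle J, A\rangle_V / \omega(V)^2$. Setting $B' = \omega(V)\, B$, I obtain a kernel in $\Ccal(\Gamma)$ (since $\Ccal(\Gamma)$ is a cone) satisfying all edge constraints, with $\int_\Gamma B'(\sigma,\sigma)\, d\mu(\sigma) = 1$ and objective value $\langle J, B'\rangle_\Gamma = \langle J, A\rangle_V / \omega(V)$. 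Taking the supremum over feasible $A$ gives the claimed inequality.

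I expect the substantive content to be entirely the complete-positivity transfer of the previous paragraph --- which is clean because composition with $p$ converts a nonnegative decomposition into another nonnegative decomposition --- while the main bookkeeping hazard is simply tracking the factor $\omega(V)$ through the pushforward identity, given that $\omega$ is not assumed normalized.
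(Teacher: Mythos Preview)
Your proof is correct and follows the same overall strategy as the paper: pull back a feasible kernel $A$ on $V$ to a kernel on $\Gamma$ via the orbit map $p(\sigma) = \sigma x_0$, then rescale to fix the trace normalization. The edge-constraint and bookkeeping arguments match the paper's (the paper simply normalizes $\omega(V) = 1$ at the outset, which absorbs your factors).

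The one tactical difference is in verifying $B \in \Ccal(\Gamma)$. You invoke the finite-subkernel criterion (Theorem~\ref{thm:finite-subkernel-condition}): restrict to finite $U \subseteq \Gamma$, decompose $A[p(U)]$, and precompose the nonnegative factors with $p$. The paper instead shows that the pull-back operator $\Phi$ is an $L^2$-isometry (via invariance of the Haar measure), hence bounded, and then argues directly from the definition of $\Ccal(V)$ as a norm closure: approximate $A$ by finite sums of nonnegative rank-one kernels, apply $\Phi$ term by term, and pass to the limit. Your route is slightly more elementary once Theorem~\ref{thm:finite-subkernel-condition} is available; the paper's route has the side benefit of yielding the isometry identity $\langle \Phi(A), \Phi(B)\rangle = \langle A, B\rangle$, from which the trace and objective-value computations also follow immediately without a separate pushforward argument.
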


In fact, there is nothing special about the cone~$\Ccal(V)$ in the
above statement; the statement holds for any cone invariant under the
action of~$\Gamma$, for example the cone of positive kernels.

\begin{proof}
We may assume that~$\omega(V) = 1$.  Fix~$x_0\in V$ and
let~$\Phi\colon L^2(V\times V) \to L^2(\Gamma \times \Gamma)$ be the
operator such that
\[
\Phi(A)(\sigma, \tau) = A(\sigma x_0, \tau x_0)
\]
for all~$\sigma$, $\tau \in \Gamma$. Since~$\Gamma$ acts continuously
on~$V$, if~$A$ is continuous, then so is~$\Phi(A)$. Moreover,
\[
\int_\Gamma \Phi(A)(\sigma, \sigma)\, d\mu(\sigma) = \int_V A(x, x)\,
d\omega(x).
\]

Indeed, 
\begin{equation}
\label{eq:theta-params-cayley-relation-i}
\int_\Gamma \Phi(A)(\sigma, \sigma)\, d\mu(\sigma) = \int_\Gamma
A(\sigma x_0, \sigma x_0)\, d\mu(\sigma).
\end{equation}
Now, the right-hand side above is independent of~$x_0$. For
if~$x_0' \neq x_0$, then since~$\Gamma$ acts transitively on~$V$ there
is~$\tau \in \Gamma$ such that~$x_0' = \tau x_0$. Then using the
right invariance of the Haar measure we get
\[
\int_\Gamma A(\sigma x_0', \sigma x_0')\, d\mu(\sigma) = \int_\Gamma
A(\sigma\tau x_0, \sigma\tau x_0)\, d\mu(\sigma) = \int_\Gamma
A(\sigma x_0, \sigma x_0)\, d\mu(\sigma).
\]

The measure~$\omega$ is the pushforward of~$\mu$, so it is invariant
under the action of~$\Gamma$ and~$\omega(V) =
1$. Continuing~\eqref{eq:theta-params-cayley-relation-i} we get
\[
\begin{split}
\int_\Gamma A(\sigma x_0, \sigma x_0)\, d\mu(\sigma) &= \int_V
\int_\Gamma A(\sigma x, \sigma x)\, d\mu(\sigma) d\omega(x)\\
&=\int_\Gamma \int_V A(\sigma x, \sigma x)\, d\omega(x) d\mu(\sigma)\\
&=\int_V A(x, x)\, d\omega(x),
\end{split}
\]
as we wanted. Similarly, one can prove that
$\langle\Phi(A), \Phi(B)\rangle = \langle A, B\rangle$; in particular,
for all~$A$, $B \in L^2(V \times V)$ we have~$\|\Phi(A)\| = \|A\|$ and
we see that~$\Phi$ is a bounded operator.

Now let~$A$ be a feasible solution of~$\vartheta(G, \Ccal(V))$. Claim:
$\Phi(A)$ is a feasible solution of $\vartheta(\cayley(\Gamma,
\Sigma_{G,x_0}), \Ccal(\Gamma))$.

Indeed, $\int_\Gamma \Phi(A)(\sigma, \sigma)\, d\mu(\sigma) =
1$. If~$\sigma$, $\tau \in \Gamma$ are adjacent in the Cayley graph,
then~$(\sigma x_0, \tau x_0) \in E$, so
that~$\Phi(A)(\sigma, \tau) = A(\sigma x_0, \tau x_0) = 0$. So
it remains to show that~$\Phi(A) \in \Ccal(\Gamma)$. 

Note~$A$ is the limit, in the norm topology, of a sequence~$(A_n)$,
where each~$A_n$ is a finite sum of kernels of the
form~$f \otimes f^*$ with~$f \in L^2(V)$ nonnegative. Since~$\Phi$ is
linear and since~$\Phi(f \otimes f^*) \in \Ccal(\Gamma)$ for all
nonnegative~$f \in L^2(V)$, we have~$\Phi(A_n) \in \Ccal(\Gamma)$ for
all~$n$.  Now~$\|\Phi(A_n - A)\| = \|A_n - A\|$, so~$\Phi(A)$ is the
limit of~$(\Phi(A_n))$, and hence~$\Phi(A) \in \Ccal(\Gamma)$, proving
the claim.

Finally, $\langle J, \Phi(A)\rangle = \langle \Phi(J), \Phi(A)\rangle =
\langle J, A\rangle$,
and since~$A$ is any feasible solution of~$\vartheta(G, \Ccal(V))$,
the theorem follows.
\end{proof}

%=====================================================================

\subsection{The Reynolds operator}

Let~$V$ be a compact Hausdorff space, let~$\Gamma$ be a compact group
that acts continuously and transitively on~$V$, and consider on~$V$ a
multiple of the pushforward of the Haar measure~$\mu$ on~$\Gamma$. An
important tool in the proof of Theorem~\ref{thm:cp-exactness} will be
the \defi{Reynolds operator}
$\rey\colon L^2(V \times V) \to L^2(V \times V)$ that maps a kernel to
its symmetrization: for~$A \in L^2(V \times V)$,
\[
%\abel{eq:reynolds}
\rey(A)(x, y) = \int_\Gamma A(\sigma x, \sigma y)\,
d\mu(\sigma)
\]
almost everywhere\footnote{First, the integral is well defined as the
  composition~$A \circ (\sigma \mapsto (\sigma x, \sigma y))$ is
  measurable, since~$A$ is measurable and the map
  $\sigma \mapsto (\sigma x, \sigma y)$ is continuous from the
  continuous action of~$\Gamma$. Second, the pushforward of the Haar
  measure is a finite measure.
  Then~$L^2(V \times V) \subseteq L^1(V \times V)$~\cite[Exercise~5,
  \S6.1]{Folland1999}, and Tonelli's theorem applied to the product
  measure on~$(V \times V) \times \Gamma$ says that
  $(x, y) \mapsto \int_\Gamma |A(\sigma x, \sigma y)|\, d\mu(\sigma)$,
  and hence~$\rey(A)(x, y)$, exists for almost
  all~$(x, y) \in V \times V$. One checks similarly
  that~$\rey(A) \in L^2(V \times V)$.}  in~$V \times V$. The operator
is defined given a group that acts on~$V$; the group and its action
will always be clear from context. Since~$\Gamma$ is compact and
therefore the Haar measure is both left and right invariant, the
Reynolds operator is self adjoint, that is,
$\langle \rey(A), B\rangle = \langle A, \rey(B)\rangle$.

\begin{lemma}
\label{lem:reynolds-continuous-easy}
If~$V$ is a compact space, if~$\Gamma$ is a compact group that acts
continuously and transitively on~$V$, and if~$V$ is metrizable via a
$\Gamma$-invariant metric, then for every continuous~$A\colon V \times
V \to \R$ the kernel~$\rey(A)$ is also continuous.
\end{lemma}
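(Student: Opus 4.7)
The plan is to prove uniform continuity of $\rey(A)$ on $V \times V$ by combining the uniform continuity of $A$ (obtained from compactness) with the $\Gamma$-invariance of the metric on $V$. Since $A$ is defined and continuous everywhere, the integrand $\sigma \mapsto A(\sigma x, \sigma y)$ is continuous in $\sigma$ (being the composition of $A$ with the continuous map $\sigma \mapsto (\sigma x, \sigma y)$ coming from the continuous action) and bounded on the compact group $\Gamma$, so the integral defining $\rey(A)(x, y)$ exists pointwise for every $(x, y)$, not merely almost everywhere, and no measure-zero adjustment is needed.

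Next, I would fix a $\Gamma$-invariant metric $d$ inducing the topology of $V$ and use the max metric $d((x,y),(x',y')) = \max\{d(x,x'), d(y,y')\}$ on $V \times V$, which inherits $\Gamma$-invariance under the diagonal action. Since $V \times V$ is compact and $A$ is continuous, $A$ is uniformly continuous: given $\epsilon > 0$ there is $\delta > 0$ such that $d((x,y),(x',y')) < \delta$ implies $|A(x,y) - A(x',y')| < \epsilon$.

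The key step uses $\Gamma$-invariance: if $d(x,x') < \delta$ and $d(y,y') < \delta$, then for every $\sigma \in \Gamma$,
\[
d(\sigma x, \sigma x') = d(x, x') < \delta \quad\text{and}\quad d(\sigma y, \sigma y') = d(y, y') < \delta,
\]
so $|A(\sigma x, \sigma y) - A(\sigma x', \sigma y')| < \epsilon$ uniformly in $\sigma$. Integrating over $\Gamma$ with the normalized Haar measure gives
\[
|\rey(A)(x, y) - \rey(A)(x', y')| \leq \int_\Gamma |A(\sigma x, \sigma y) - A(\sigma x', \sigma y')|\, d\mu(\sigma) \leq \epsilon,
\]
which proves that $\rey(A)$ is (uniformly) continuous on $V \times V$.

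I do not expect a major obstacle: the essential point is just that $\Gamma$-invariance of the metric turns the pointwise uniform continuity of $A$ into a uniform-in-$\sigma$ estimate for the integrand, after which the fact that $\mu$ is a probability measure closes the argument. The only minor care needed is to observe that the hypothesis of continuous $A$ upgrades the almost-everywhere definition of $\rey(A)$ in the footnote to a genuine pointwise definition, so that talking about continuity of $\rey(A)$ as an actual function makes sense.
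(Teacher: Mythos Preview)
Your proof is correct and follows essentially the same route as the paper's: both use the $\Gamma$-invariant max metric on $V\times V$, invoke uniform continuity of $A$ from compactness, transfer the $\epsilon$--$\delta$ estimate through $\Gamma$-invariance to get a bound uniform in $\sigma$, and then integrate. Your additional remark that for continuous $A$ the integral defining $\rey(A)(x,y)$ exists pointwise (upgrading the almost-everywhere footnote) is a small but welcome clarification not made explicit in the paper.
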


Here we say that a metric~$d$ on~$V$ is \defi{$\Gamma$-invariant}
if~$d(\sigma x, \sigma y) = d(x, y)$ for all $x$,~$y \in V$
and~$\sigma \in \Gamma$.

\begin{proof}
If~$d$ is a $\Gamma$-invariant metric on~$V$, then
\[
d((x, y), (x', y')) = \max\{ d(x, x'), d(y, y') \}
\]
is a metric inducing the product topology on~$V \times V$. Now~$A$ is
continuous, and hence uniformly continuous on the compact metric
space~$V \times V$. So for every~$\epsilon > 0$ there is~$\delta > 0$
such that for all~$(x, y)$, $(x', y') \in V \times V$,
\[
\text{if $d((x, y), (x', y')) < \delta$, then $|A(x, y) - A(x', y')|
  < \epsilon$}.
\]
Since~$d$ is $\Gamma$-invariant, $d((\sigma x, \sigma y), (\sigma x',
\sigma y')) = d((x, y), (x', y'))$, and
\begin{equation}
\label{eq:eps-delta}
\text{if~$d((x, y), (x', y')) < \delta$, then $|A(\sigma x, \sigma y)
  - A(\sigma x', \sigma y')| < \epsilon$ for all~$\sigma \in \Gamma$}.
\end{equation}

So, given~$\epsilon > 0$, if~$\delta > 0$ is such
that~\eqref{eq:eps-delta} holds, then $d((x, y), (x', y')) < \delta$
implies that
\[
  |\rey(A)(x, y) - \rey(A)(x', y')| \leq \int_\Gamma |A(\sigma x,
  \sigma y) - A(\sigma x', \sigma y')|\, d\mu(\sigma) < \epsilon,\\
\]
proving that $R(A)$ is continuous.
\end{proof}

\begin{lemma}
\label{lem:reynolds-continuous}
If~$V$ is a compact space, if~$\Gamma$ is a compact group that acts
continuously and transitively on~$V$, if~$V$ is metrizable via a
$\Gamma$-invariant metric, and if on~$V$ we consider a
multiple~$\omega$ of the pushforward of the Haar measure on~$\Gamma$,
then for every~$f \in L^2(V)$ the kernel~$\rey(f \otimes f^*)$ is
continuous.
\end{lemma}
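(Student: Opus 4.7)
The plan is to reduce to the continuous case (Lemma~\ref{lem:reynolds-continuous-easy}) by approximating $f$ in $L^2(V)$ by continuous functions $f_n \in C(V)$ and showing that $\rey(f_n \otimes f_n^*)$ converges uniformly on $V \times V$ to $\rey(f \otimes f^*)$. Each $f_n \otimes f_n^*$ is continuous on the compact space $V \times V$, so Lemma~\ref{lem:reynolds-continuous-easy} guarantees that $\rey(f_n \otimes f_n^*)$ is continuous, and a uniform limit of continuous functions is continuous.

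The central tool is the following change-of-variables identity, valid for every $x \in V$ and every nonnegative measurable $g$ on $V$:
\[
\int_\Gamma g(\sigma x)\, d\mu(\sigma) = c^{-1} \int_V g\, d\omega,
\]
where $c = \omega(V)$ is the constant relating $\omega$ to the pushforward of $\mu$. For $x = x_0$ this is just the definition of pushforward; for general $x = \tau x_0$ it follows from the substitution $\sigma \mapsto \sigma\tau$ combined with the right-invariance of the Haar measure on the compact group $\Gamma$. Applied to $g = |f|^2$, this identity shows that $\sigma \mapsto f(\sigma x)$ belongs to $L^2(\Gamma)$ for \emph{every} $x \in V$, with $L^2(\Gamma)$-norm $c^{-1/2}\|f\|_{L^2(V)}$ independent of $x$, and makes
\[
\rey(f \otimes f^*)(x, y) = \int_\Gamma f(\sigma x) f(\sigma y)\, d\mu(\sigma)
\]
a bona fide pointwise integral at every $(x, y) \in V \times V$.

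Applying the identity to $g = |f - f_n|^2$ gives that $\sigma \mapsto f(\sigma x) - f_n(\sigma x)$ has $L^2(\Gamma)$-norm equal to $c^{-1/2}\|f - f_n\|_{L^2(V)}$, uniformly in $x \in V$. A single application of the Cauchy--Schwarz inequality in $L^2(\Gamma)$ then yields
\[
\bigl|\rey(f \otimes f^*)(x,y) - \rey(f_n \otimes f_n^*)(x,y)\bigr| \leq c^{-1}\|f - f_n\|_{L^2(V)}\bigl(\|f\|_{L^2(V)} + \|f_n\|_{L^2(V)}\bigr),
\]
which is independent of $(x,y)$ and tends to $0$ as $n \to \infty$. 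The only slightly delicate point is the pointwise (rather than almost-everywhere) validity of the change-of-variables identity, but this comes for free from transitivity and right-invariance of $\mu$, so I do not expect any genuine obstacle.
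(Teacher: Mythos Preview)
Your proposal is correct and follows essentially the same route as the paper: approximate $f$ by a continuous function, invoke Lemma~\ref{lem:reynolds-continuous-easy} for the continuous case, and use the change-of-variables identity (your $\int_\Gamma g(\sigma x)\,d\mu(\sigma) = c^{-1}\int_V g\,d\omega$, which the paper states as $(f,g) = (\phi,\psi)$ after normalizing $\omega(V)=1$) together with Cauchy--Schwarz to get uniform convergence of $\rey(f_n\otimes f_n^*)$ to $\rey(f\otimes f^*)$. The only cosmetic differences are that you keep track of the constant $c$ explicitly and phrase the approximation as a sequence rather than a single $\epsilon$-close function.
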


\begin{proof}
By normalizing~$\omega$ if necessary, we may assume that~$\omega(V) =
1$.  Fix~$x \in V$. Given a function~$f \in L^2(V)$, consider the
function~$\phi\colon\Gamma \to \R$ such that~$\phi(\sigma) = f(\sigma
x)$; given~$g \in L^2(V)$, define~$\psi\colon\Gamma \to \R$
similarly. Then
\begin{equation}
\label{eq:inner-product-equiv}
(f, g) = (\phi, \psi),
\end{equation}
where~$({\cdot}, {\cdot})$ denotes the usual~$L^2$ inner product in
the respective spaces; this implies in particular that~$\phi$, $\psi
\in L^2(\Gamma)$. To see~\eqref{eq:inner-product-equiv} note that,
since~$\Gamma$ acts transitively, for every~$x' \in V$ there is~$\tau
\in \Gamma$ such that~$x = \tau x'$. Then use the invariance of the
Haar measure to get
\[
\int_\Gamma f(\sigma x') g(\sigma x')\, d\mu(\sigma)
= \int_\Gamma f(\sigma\tau x') g(\sigma\tau x')\, d\mu(\sigma)
= \int_\Gamma f(\sigma x) g(\sigma x)\, d\mu(\sigma) = (\phi, \psi).
\]
So, using the invariance of~$\omega$ under the action of~$\Gamma$,
\[
(\phi, \psi) = \int_V \int_\Gamma f(\sigma x) g(\sigma x)\,
d\mu(\sigma) d\omega(x)
= \int_\Gamma \int_V f(\sigma x) g(\sigma x)\, d\omega(x) d\mu(\sigma)
= (f, g),
\]
as we wanted.
  
Assume without loss of generality that~$\|f\| \leq 1$.  Continuous
functions are dense in~$L^2(V)$, so given~$\epsilon > 0$ there is a
continuous function~$g$ such that~$\|f - g\| < \epsilon$. Then,
for~$x$, $y \in V$,
\[
\begin{split}
&\biggl|\int_\Gamma f(\sigma x) f(\sigma y) -
g(\sigma x) g(\sigma y)\, d\mu(\sigma)\biggr|\\
&\qquad=\biggl|\int_\Gamma f(\sigma x) f(\sigma y) -
g(\sigma x) f(\sigma y) + g(\sigma x)
f(\sigma y) - g(\sigma x) g(\sigma y)\,
d\mu(\sigma)\biggr|\\
&\qquad\leq \int_\Gamma |f(\sigma x) - g(\sigma x)|
|f(\sigma y)|\, d\mu(\sigma) + \int_\Gamma |g(\sigma x)|
|f(\sigma y) - g(\sigma y)|\, d\mu(\sigma).
\end{split}
\]
Since~$\|f\| \leq 1$, and hence~$\|g\| \leq 1 + \epsilon$, the
Cauchy-Schwarz inequality together with~\eqref{eq:inner-product-equiv}
implies that the right-hand side above is less than~$\epsilon + (1 +
\epsilon)\epsilon$.  So
\[
|\rey(f \otimes f^*)(x,  y) - \rey(g \otimes g^*)(x,
 y)| < \epsilon + (1 + \epsilon)\epsilon
\]
for all~$x$, $y \in V$.

Now~$g \otimes g^*$ is continuous, so
Lemma~\ref{lem:reynolds-continuous-easy} says that~$\rey(g \otimes
g^*)$ is continuous. With the above inequality, this implies
that~$\rey(f \otimes f^*)$ is the uniform limit of continuous
functions, and hence continuous.
\end{proof}

%=====================================================================

\subsection{Proof of Theorem~\ref{thm:cp-exactness}}

Under the hypotheses of Theorem~\ref{thm:cp-exactness}, we must
establish the identity $\vartheta(G, \Ccal(V)) = \alpha_\omega(G)$.
The~`$\geq$' inequality follows from Theorem~\ref{thm:cp-upper-bound};
for the reverse inequality we use the following lemma.

\begin{lemma}
\label{lem:ind-set-from-solution}
Let~$G = (V, E)$ be a locally independent graph where~$V$ is a compact
Hausdorff space, let~$\Gamma \subseteq \aut(G)$ be a compact group
that acts continuously and transitively on~$V$, let~$\omega$ be a
multiple of the pushforward of the Haar measure on~$\Gamma$, and
assume~$\Gamma$ is metrizable via a bi-invariant density metric for
the Haar measure. If~$A$ is a feasible solution of
$\vartheta(G, \Ccal(V))$, then there is a measurable independent set
in~$G$ with measure at least~$\langle J, A\rangle$.
\end{lemma}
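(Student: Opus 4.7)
The plan is to pass to a Cayley graph over $\Gamma$, symmetrize there, use Choquet's theorem to split the symmetrized kernel into rank-one pieces, and trim a single piece via the density metric to obtain a true independent set.

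First, by Theorem~\ref{thm:closed-edge-set} I may assume $E$ is closed. Rescale so that $\omega(V) = 1$, making $\omega$ the pushforward of the Haar measure $\mu$. Setting $\Sigma := \Sigma_{G, x_0}$ and applying the isometric embedding $\Phi$ constructed in the proof of Lemma~\ref{thm:theta-params-cayley-relation}, the kernel $\Phi(A)$ is a feasible solution of $\vartheta(\cayley(\Gamma, \Sigma), \Ccal(\Gamma))$ with $\langle J, \Phi(A)\rangle = \langle J, A\rangle$, so by Lemma~\ref{thm:cayley-vertex-transitive-equiv} it suffices to produce a measurable independent set in the Cayley graph of $\mu$-measure at least $\langle J, A\rangle$. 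Next, symmetrize under the diagonal action by left translation:
\[
K(\sigma, \tau) := \int_\Gamma \Phi(A)(\gamma\sigma, \gamma\tau)\, d\mu(\gamma).
\]
Left translations are automorphisms of the Cayley graph and preserve $\Ccal(\Gamma)$, so $K$ remains feasible with the same objective; Lemma~\ref{lem:reynolds-continuous-easy} (using the bi-invariant metric on $\Gamma$) makes $K$ continuous, and by left invariance $K(\sigma, \tau) = b(\sigma^{-1}\tau)$ for some continuous $b \colon \Gamma \to \R$.

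Since $\tr K = 1$, Theorem~\ref{thm:tip-property} places $K$ in the tip $\Tcal(\Gamma)$, which is weakly compact, convex, and metrizable (because $\Gamma$ is compact metric and so $L^2(\Gamma \times \Gamma)$ is separable). Choquet's theorem then yields a Borel probability measure $\nu$ supported on extreme points of $\Tcal(\Gamma)$ with $K = \int e\, d\nu(e)$; Theorem~\ref{thm:tip-extreme-points} identifies these extreme points as $0$ together with $\{f \otimes f^* : f \geq 0,\ \|f\| = 1\}$. For each $\xi \in \Sigma$ and every $\sigma \in \Gamma$, $(\sigma, \sigma\xi)$ is an edge, so $b(\xi) = 0$; integrating in $\sigma$ and swapping with Fubini gives
\[
0 = \int \phi_f(\xi)\, d\nu(f), \qquad \phi_f(\xi) := \int_\Gamma f(\sigma)\, f(\sigma\xi)\, d\mu(\sigma) \geq 0,
\]
so $\phi_f(\xi) = 0$ for $\nu$-a.e.\ $f$. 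Intersecting the resulting $\nu$-null sets over a countable dense subset of the separable closed set $\Sigma$ and using continuity of $\phi_f$ (the autocorrelation of an $L^2$-function is continuous on a compact group), one obtains $\phi_f \equiv 0$ on all of $\Sigma$ for $\nu$-a.e.\ $f$. Combining with $\langle J, A\rangle = \langle J, K\rangle = \int (f,1)^2\, d\nu(f)$, I select a single $f$ for which simultaneously $\phi_f \equiv 0$ on $\Sigma$ and $(f, 1)^2 \geq \langle J, A\rangle$.

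Finally, let $S := \{\sigma \in \Gamma : f(\sigma) > 0\}$ and let $I$ be its set of density points; by the density metric hypothesis $\mu(I) = \mu(S)$, and Cauchy--Schwarz with $\|f\| = 1$ gives $\mu(S) \geq (f, 1)^2 \geq \langle J, A\rangle$. The main obstacle is to show that $I$ is itself independent in the Cayley graph. Suppose for contradiction that $\sigma, \tau \in I$ are adjacent, so $\eta := \sigma^{-1}\tau \in \Sigma$ and hence $\phi_f(\eta) = 0$, i.e., $\mu(S \cap S\eta^{-1}) = 0$. Bi-invariance of the metric gives $B(\sigma, \delta)\eta = B(\tau, \delta)$ and $\mu(B(\sigma, \delta)) = \mu(B(\tau, \delta)) =: v_\delta$, and right-invariance of $\mu$ gives $\mu(B(\sigma, \delta) \cap S\eta^{-1}) = \mu(B(\tau, \delta) \cap S)$. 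Inclusion-exclusion inside $B(\sigma, \delta)$ then yields
\[
\mu\bigl(B(\sigma, \delta) \cap S \cap S\eta^{-1}\bigr) \geq \mu\bigl(B(\sigma, \delta) \cap S\bigr) + \mu\bigl(B(\tau, \delta) \cap S\bigr) - v_\delta,
\]
which is strictly positive for all small enough $\delta$ because $\sigma$ and $\tau$ are density points of $S$, contradicting $\mu(S \cap S\eta^{-1}) = 0$. Hence $I$ is independent, and pulling it back through Lemma~\ref{thm:cayley-vertex-transitive-equiv} produces a measurable independent set in $G$ of $\omega$-measure at least $\langle J, A\rangle$.
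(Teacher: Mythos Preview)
Your strategy is the paper's strategy: pass to the Cayley graph, apply Choquet on the tip~$\Tcal(\Gamma)$, show that for $\nu$-almost-every extreme point the symmetrized rank-one kernel vanishes on~$\Sigma$, select one such $f$ with large $(f,1)^2$, and take density points of its support.  Your density-point argument is correct and essentially identical to the paper's.

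The one genuine gap is the sentence ``integrating in~$\sigma$ and swapping with Fubini gives $0 = \int \phi_f(\xi)\, d\nu(f)$.''  Choquet's theorem only says that~$K$ is the \emph{barycenter} of~$\nu$: $\langle G, K\rangle = \int \langle G, e\rangle\, d\nu(e)$ for every $G \in \Lsym^2(\Gamma\times\Gamma)$.  It gives no pointwise identity $K(\sigma,\tau) = \int f(\sigma)f(\tau)\, d\nu(f)$ that would license your swap, and $\int_\Gamma K(\sigma,\sigma\xi)\, d\mu(\sigma)$ is a trace along a null set of~$\Gamma\times\Gamma$, not an $L^2$ pairing.  The paper fills exactly this hole: it builds explicit $L^2$ kernels~$T_n$ (weighted indicators of small boxes around a countable dense set of edges), for which $\langle T_n, \rey(A)\rangle \to 0$ by continuity of~$\rey(A)$; passes~$T_n$ through the barycenter identity; and then uses Fatou's lemma together with the continuity of $\rey(f\otimes f^*)$ (Lemma~\ref{lem:reynolds-continuous}) to conclude that $\rey(f\otimes f^*)$ vanishes on all edges for $\nu$-a.e.~$f$.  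Your conclusion $b(\xi) = \int \phi_f(\xi)\, d\nu(f)$ is in fact true, and you can also recover it by testing the barycenter identity against invariant kernels $G_\delta(\sigma,\tau) = g_\delta(\sigma^{-1}\tau)$ with~$g_\delta$ an approximate identity at~$\xi$, obtaining $(g_\delta, b) = \int (g_\delta, \phi_f)\, d\nu$ and letting $\delta\downarrow 0$ with dominated convergence (using $\|\phi_f\|_\infty \leq 1$ and the continuity of $b$ and~$\phi_f$); but some such approximation argument must replace the bare appeal to Fubini.
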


\begin{proof}
In view of Lemmas~\ref{thm:cayley-vertex-transitive-equiv}
and~\ref{thm:theta-params-cayley-relation}, it is sufficient to prove
that, if~$\Sigma \subseteq \Gamma$ is a connection set such that
$\cayley(\Gamma, \Sigma)$ is a locally independent graph and if~$A$ is
a feasible solution of
$\vartheta(\cayley(\Gamma, \Sigma), \Ccal(\Gamma))$, then there is an
independent set in $\cayley(\Gamma, \Sigma)$ of measure at
least~$\langle J, A\rangle$.

So fix a connection set~$\Sigma \subseteq \Gamma$ and suppose
$\cayley(\Gamma, \Sigma)$ is locally independent. Throughout the rest
of the proof,~$E_\Sigma$ will be the edge set of
$\cayley(\Gamma, \Sigma)$. It is immediate that
\[
\vartheta(\cayley(\Gamma, \Sigma), \Ccal(\Gamma)) = \vartheta((\Gamma,
E_\Sigma), \Ccal(\Gamma)) = \vartheta((\Gamma, \cl E_\Sigma),
\Ccal(\Gamma)),
\]
that is, considering the closure of the edge set does not change the
optimal value. Together with Theorem~\ref{thm:closed-edge-set}, this
implies that we may assume that~$E_\Sigma$ is closed.

Notice that~$\Gamma$ is a Hausdorff space (topological groups are
Hausdorff spaces by definition) and that~$\mu$ is an inner-regular
Borel measure (because it is a Haar measure) that is positive on open
sets (indeed, if~$S \subseteq \Gamma$ is open, then~$\{\,\sigma S :
\sigma \in \Gamma\,\}$ is an open cover of~$\Gamma$; since~$\Gamma$ is
compact, there is a finite subcover, hence~$\mu(S) > 0$ or else we
would have~$\mu(\Gamma) = 0$). So we can use the results
of~\S\ref{sec:cp-cop}.

There is a countable set~$E' \subseteq E_\Sigma$ such that~$\cl E' =
E_\Sigma$. Indeed, since~$E_\Sigma$ is closed and hence compact, for
every~$n \geq 1$ we can cover~$E_\Sigma$ with finitely many open balls
of radius~$1/n$; now choose one point of~$E_\Sigma$ in each such
ball and let~$E'$ be the set of all points chosen for~$n = 1$, $2$,
\dots.

Let~$(\sigma_1, \tau_1)$, $(\sigma_2, \tau_2)$, \dots\ be an
enumeration of~$E'$. For~$n \geq 1$ consider the kernel
\[
T_n = \sum_{i=1}^\infty 2^{-i} \mu(B(\sigma_i, 1/n))^{-1}
\mu(B(\tau_i, 1/n))^{-1} \chi_{B(\sigma_i, 1/n) \times B(\tau_i,
  1/n)}.
\]
This is indeed a kernel: the norm of each summand is~$2^{-i}$ times a
constant that depends only on~$n$, so~$T_n$ is square integrable.

If~$A\colon\Gamma \times \Gamma \to \R$ is continuous, and hence
uniformly continuous, then for every $\epsilon > 0$ there is~$n_0$
such that for all~$n \geq n_0$ we have
\[
|A(\sigma, \tau) - A(\sigma_i, \tau_i)| < \epsilon\qquad \text{for
  all~$i \geq 1$, $\sigma \in B(\sigma_i, 1/n)$, and~$\tau \in
  B(\tau_i, 1/n)$.}
\]
This implies that
\begin{equation}
\label{eq:Tn-limit}
\lim_{n\to\infty} \langle T_n, A\rangle = \sum_{i=1}^\infty 2^{-i}
A(\sigma_i, \tau_i).
\end{equation}

Let~$A$ be a feasible solution of
$\vartheta(\cayley(\Gamma, \Sigma), \Ccal(\Gamma))$.
Since~$\tr A = 1$, Theorem~\ref{thm:tip-property} tells us
that~$A \in \Tcal(\Gamma)$, where~$\Tcal(\Gamma)$ is the tip
of~$\Ccal(\Gamma)$; see~\S\ref{sec:cp-tip}. Also
from~\S\ref{sec:cp-tip} we know that~$\Tcal(\Gamma)$ is weakly
compact, that it is a subset of~$L^2(\Gamma \times \Gamma)$, whose
weak topology is locally convex, and that the weak topology
on~$\Tcal(\Gamma)$ is metrizable\footnote{Since~$\Gamma$ is compact
  and metrizable, it is separable. This implies
  that~$L^2(\Gamma \times \Gamma)$ is separable, and
  hence~$\Tcal(\Gamma)$ is metrizable; see~\S\ref{sec:cp-tip}.}. So
we can apply Choquet's theorem~\cite[Theorem~10.7]{Simon2011} to get a
probability measure~$\nu$ on~$\Tcal(\Gamma)$ with barycenter~$A$
and~$\nu(\Xcal) = 1$, where~$\Xcal$ is the set of extreme points
of~$\Tcal(\Gamma)$. From Theorem~\ref{thm:tip-extreme-points} we know
that any element of~$\Xcal$ is of the form~$f\otimes f^*$ for some
nonnegative~$f \in L^2(\Gamma)$ that is either~$0$ or such
that~$\|f\| = 1$. So~$A$ being the barycenter of~$\nu$ means that for
every~$K \in \Lsym^2(\Gamma \times \Gamma)$ we have
\begin{equation}
\label{eq:choquet-A}
\langle K, A\rangle = \int_\Xcal \langle K, f \otimes f^*\rangle\,
d\nu(f \otimes f^*).
\end{equation}

Since~$A$ is feasible, its symmetrization~$\rey(A)$ is also feasible,
and in particular~$\rey(A)(\sigma, \tau) = 0$ for all~$(\sigma, \tau)
\in E_\Sigma$. (Note that here we need to use
Lemma~\ref{lem:reynolds-continuous-easy}, and for that we need the
left invariance of the metric on~$\Gamma$.) This, together
with~\eqref{eq:Tn-limit},~\eqref{eq:choquet-A}, and the
self-adjointness of the Reynolds operator gives
\[
\begin{split}
0&= \lim_{n\to\infty} \langle T_n, \rey(A)\rangle\\
&=\lim_{n\to\infty} \langle \rey(T_n), A\rangle\\
&=\lim_{n\to\infty} \int_\Xcal \langle \rey(T_n), f \otimes
f^*\rangle\, d\nu(f \otimes f^*)\\
&=\lim_{n\to\infty} \int_\Xcal \langle T_n, \rey(f \otimes
f^*)\rangle\, d\nu(f \otimes f^*).
\end{split}
\]
Fatou's lemma now says that we can exchange the integral with the
limit (that becomes a $\liminf$) to get
\[
0 \geq \int_\Xcal \liminf_{n\to\infty}\langle T_n, \rey(f \otimes
f^*)\rangle\, d\nu(f \otimes f^*).
\]
So, since~$T_n$ and all~$f$s above are nonnegative, the set
\[
\{\, f \otimes f^* : \liminf_{n\to\infty}\langle T_n, \rey(f \otimes
f^*)\rangle > 0\,\}
\]
has measure~0 with respect to~$\nu$.

Taking~$K = J$ in~\eqref{eq:choquet-A}, we see that we can
choose~$f \geq 0$ with~$\|f\| = 1$ such
that~$\langle J, f \otimes f^*\rangle \geq \langle J, A\rangle$ and
\[
\liminf_{n\to\infty}\langle T_n, \rey(f \otimes f^*)\rangle = 0.
\]
By Lemma~\ref{lem:reynolds-continuous}, $\rey(f \otimes f^*)$ is
continuous, and hence from~\eqref{eq:Tn-limit} we see that~$f$
satisfies
\[
\sum_{i=1}^\infty 2^{-i} \rey(f \otimes f^*)(\sigma_i, \tau_i) = 0.
\]
So it must be that~$\rey(f \otimes f^*)(\sigma_i, \tau_i) = 0$ for
all~$i$, and hence~$\rey(f \otimes f^*)(\sigma, \tau) = 0$ for
all~$(\sigma, \tau) \in E_\Sigma$.

We are now almost done. Let~$I$ be the set of density points in the
support of~$f$ (note that~$f \in L^2(\Gamma)$, so its support is not
clearly defined; here it suffices to take, however, an arbitrary
representative of the equivalence class of~$f$ and then its
support). Claim: $I$ is independent. Proof:
Since~$\rey(f \otimes f^*)(\sigma, \tau) = 0$ for
every~$(\sigma, \tau) \in E_\Sigma$, it suffices to show that
if~$\sigma$, $\tau \in I$,
then~$\rey(f \otimes f^*)(\sigma, \tau) > 0$.

Since~$\sigma$, $\tau \in I$ are density points, there is~$\delta > 0$
such that
\begin{equation}
\label{eq:main-proof-iii}
\frac{\mu(I \cap B(\sigma, \delta))}{\mu(B(\sigma,
  \delta))} \geq 2/3\qquad\text{and}\qquad
\frac{\mu(I \cap B(\tau, \delta))}{\mu(B(\tau,
  \delta))} \geq 2/3.
\end{equation}
For~$\zeta \in \Gamma$, write
$N_\zeta = \{\,\gamma \in \Gamma : \gamma\zeta \in I\,\}$; note
that~$I = N_\zeta \zeta$. The right invariance of the metric
on~$\Gamma$ implies that~$B(\zeta, \delta) = B(1, \delta) \zeta$ for
all~$\zeta \in \Gamma$ and~$\delta > 0$. Then,
using~\eqref{eq:main-proof-iii} and the invariance of~$\mu$,
\[
\begin{split}
1 &\geq \mu(B(1, \delta))^{-1} \mu((N_\sigma \cup N_\tau) \cap B(1,
\delta))\\
&=\mu(B(1, \delta))^{-1} (\mu(N_\sigma \cap B(1, \delta)) + \mu(N_\tau
\cap B(1, \delta)) - \mu(N_\sigma \cap N_\tau \cap B(1, \delta)))\\
%&=\mu(B(1, \delta))^{-1} (\mu((N_\sigma \cap B(1, \delta))\sigma) + \mu((N_\tau
%\cap B(1, \delta))\tau) - \mu(N_\sigma \cap N_\tau \cap B(1, \delta)))\\
%&=\mu(B(1, \delta))^{-1} (\mu(I \cap B(\sigma, \delta)) + \mu(I
%\cap B(\tau, \delta)) - \mu(N_\sigma \cap N_\tau \cap B(1, \delta)))\\
&\geq 4/3 - \mu(B(1, \delta))^{-1} \mu(N_\sigma \cap N_\tau \cap B(1,
\delta)).
\end{split}
\]
Hence
$\mu(N_\sigma \cap N_\tau) \geq \mu(N_\sigma \cap N_\tau \cap B(1,
\delta)) \geq \mu(B(1, \delta)) / 3 > 0$.
Finally, since $f(\gamma) > 0$ for all~$\gamma \in I$,
\[
\rey(f \otimes f^*)(\sigma, \tau) = \int_{N_\sigma \cap N_\tau}
f(\gamma\sigma) f(\gamma\tau)\, d\mu(\gamma) > 0,
\]
proving the claim.

So~$I$ is independent; it remains to estimate its measure. Recall~$I$
has the same measure as the support of~$f$.  Since~$\|f\| = 1$,
if~$\chi_\Gamma$ is the constant~1 function, then
\[
\langle J, A\rangle \leq \langle J, f \otimes f^*\rangle = (f,
\chi_\Gamma)^2 = (f, \chi_I)^2 \leq \|f\|^2 \|\chi_I\|^2 = \mu(I),
\]
proving the lemma.
\end{proof}

\begin{proof}[Proof of Theorem~\ref{thm:cp-exactness}]
Theorem~\ref{thm:cp-upper-bound} says that
$\vartheta(G, \Ccal(V)) \geq \alpha_\omega(G)$.  The reverse
inequality follows directly from
Lemma~\ref{lem:ind-set-from-solution}.
\end{proof}

Notice that, if~$\vartheta(G, \Ccal(V))$ has an optimal solution, then
Lemma~\ref{lem:ind-set-from-solution} implies that the measurable
independence number is attained, that is, there is a measurable
independent set~$I$ with~$\omega(I) = \alpha_\omega(G)$. This is the
case, for instance, of the distance graph $G = G(S^{n-1}, \{\theta\})$
for~$n \geq 3$. In this case, a convergence argument, akin to the one
we will use in~\S\ref{sec:primal-sequence}, can be used to show that
$\vartheta(G, \Ccal(V))$ has an optimal solution. This provides
another proof of a result of DeCorte and Pikhurko~\cite{DeCorteP2016}.

%%%%%%%%%%%%%%%%%%%%%%%%%%%%%%%%%%%%%%%%%%%%%%%%%%%%%%%%%%%%%%%%%%%%%%

\section{Distance graphs on the Euclidean space}
\label{sec:euclid}

Theorem~\ref{thm:cp-exactness} applies only to graphs on compact
spaces, but thanks to a limit argument it can be extended to some
graphs on~$\R^n$; we will see now how to make this extension for
distance graphs.

Let~$D \subseteq (0, \infty)$ be a set of forbidden distances and
consider the $D$-distance graph~$G(\R^n, D)$, where two vertices~$x$,
$y \in \R^n$ are adjacent if~$\|x-y\| \in D$. To measure the size of
an independent set in~$G(\R^n, D)$ we use the upper density. Given a
Lebesgue-measurable set~$X \subseteq \R^n$, its \defi{upper density}
is
\[
\ud(X) = \sup_{p\in\R^n}\limsup_{T\to\infty} \frac{\vol(X \cap (p +
  [-T, T]^n))}{\vol [-T,T]^n},
\]
where~$\vol$ is the Lebesgue measure. The \defi{independence density}
of~$G(\R^n, D)$ is
\[
\ualpha(G(\R^n, D)) = \sup \{\, \ud(I) : \text{$I \subseteq \R^n$ is
  Lebesgue-measurable and independent}\,\}.
\]

%=====================================================================

\subsection{Periodic sets and limits of tori}
\label{sec:periodic-sets}

The key idea is to consider independent sets that are periodic. A
set~$X \subseteq \R^n$ is \defi{periodic} if there is a
lattice~$\Lambda \subseteq \R^n$ whose action leaves~$X$ invariant,
that is, $X + v = X$ for all~$v \in \Lambda$; in this case we say
that~$\Lambda$ is a \defi{periodicity lattice} of~$X$. Given a
lattice~$\Lambda \subseteq \R^n$ spanned by vectors~$u_1$,
\dots,~$u_n$, its (strict) \defi{fundamental domain} with respect
to~$u_1$, \dots,~$u_n$ is the set
\[
F = \{\, \alpha_1 u_1 + \cdots + \alpha_n u_n :
\text{$\alpha_i \in [-1/2, 1/2)$ for all~$i$}\, \}.
\]
A periodic set with periodicity lattice~$\Lambda$ repeats itself in
copies of~$F$ translated by vectors in~$\Lambda$.  We identify the
torus~$\R^n / \Lambda$ with the fundamental domain~$F$ of~$\Lambda$,
identifying a coset~$S$ with the unique~$x \in F$ such
that~$S = x + \Lambda$. When speaking of an
element~$x \in \R^n / \Lambda$, it is always implicit that~$x$ is the
unique representative of~$x + \Lambda$ that lies in the fundamental
domain.

Given a lattice~$\Lambda \subseteq \R^n$, consider the
graph~$G(\R^n / \Lambda, D)$ whose vertex set is the
torus~$\R^n / \Lambda$ and in which vertices~$x$,
$y \in \R^n / \Lambda$ are adjacent if there is~$v \in \Lambda$ such
that~$\|x-y+v\| \in D$. Independent sets in~$G(\R^n / \Lambda, D)$
correspond to periodic independent sets in~$G(\R^n, D)$ with
periodicity lattice~$\Lambda$ and vice versa.

\begin{lemma}
\label{lem:compactification-li}
If~$D \subseteq (0, \infty)$ is closed and bounded, then
$G(\R^n / L\Z^n, D)$ is locally independent for every~$L > 2\sup D$.
\end{lemma}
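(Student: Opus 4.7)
The plan is to verify the hypotheses of Theorem~\ref{thm:metrizable-li}: namely, that $\R^n / L\Z^n$ is metrizable and that the edge set $E$ of $G(\R^n/L\Z^n, D)$ is closed. Metrizability is immediate, since the quotient metric
\[
d([x],[y]) = \min_{v \in L\Z^n}\|x - y + v\|
\]
induces the quotient topology on $\R^n/L\Z^n$.

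The substance of the proof is to show that $E$ is closed, which I would do by first establishing the clean description
\[
E = \{\, ([x],[y]) \in (\R^n/L\Z^n)^2 : d([x],[y]) \in D\,\}.
\]
The inclusion $\supseteq$ is immediate from the definition of the edge relation. For the inclusion $\subseteq$, suppose $([x],[y])$ is an edge, so that $\|x - y + v\| \in D$ for some $v \in L\Z^n$; I claim this $v$ must realize the minimum defining $d([x],[y])$. This is where the hypothesis $L > 2\sup D$ enters: if two distinct elements $v, v' \in L\Z^n$ both satisfied $\|x-y+v\|, \|x-y+v'\| \le \sup D$, the triangle inequality would give $\|v - v'\| \le 2\sup D < L$, contradicting the fact that distinct points of $L\Z^n$ are at distance at least $L$ apart. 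Hence the norm of every other translate strictly exceeds $\sup D$ and therefore lies outside $D$, so the $v$ witnessing the edge must be the one attaining $d([x],[y])$.

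With this description in hand, $E = d^{-1}(D)$ for the continuous map $d$, and since $D$ is closed and bounded in $\R$ it is compact, hence closed; therefore $E$ is closed. Theorem~\ref{thm:metrizable-li} then yields local independence. The only nontrivial step is the uniqueness-of-translate argument; the remaining verifications are routine.
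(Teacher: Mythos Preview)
Your proof is correct and follows essentially the same route as the paper: show that adjacency in $G(\R^n/L\Z^n, D)$ is equivalent to $d([x],[y]) \in D$ via a uniqueness-of-translate argument, then invoke Theorem~\ref{thm:metrizable-li}. The only cosmetic difference is that the paper carries out the uniqueness step using the $\ell^\infty$ norm on $L\Z^n$, whereas your triangle-inequality argument with the Euclidean norm is slightly cleaner.
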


The hypothesis that~$D$ is bounded is essential: for instance,
if~$D = (1,\infty)$, then for every~$L > 0$, any $x \in \R^n / L\Z^n$
would be adjacent to itself. When~$D$ is unbounded, however, a theorem
of Furstenberg, Katznelson, and Weiss~\cite{FurstenbergKW1990} implies
that $\ualpha(G(\R^n, D)) = 0$, so this case is not really
interesting.

Though the lemma is stated in terms of the lattice~$L\Z^n$, a similar
statement holds for any lattice~$\Lambda$, as long as the shortest
nonzero vectors have length greater than~$2\sup D$. The
lattice~$L\Z^n$ is chosen here for concreteness and also because it is
the lattice that will be used later on.

\begin{proof}
The torus~$\R^n / L\Z^n$ is a metric space, for instance with the
metric
\begin{equation}
\label{eq:torus-metric}
d(x, y) = \inf_{v \in L\Z^n} \|x-y+v\|
\end{equation}
for~$x$, $y \in \R^n / L\Z^n$. If~$x$, $y$ lie in the fundamental
domain with respect to the canonical basis vectors,
then~$\|x-y\|_\infty < L$ and~$\|x-y\| < L n^{1/2}$. So
if~$\|v\|_\infty \geq L + L n^{1/2}$, then
$\|x-y+v\| \geq \|x-y+v\|_\infty > L n^{1/2}$. This shows that the
infimum above is attained by one of the finitely many
vectors~$v \in \R^n / L\Z^n$ with~$\|v\|_\infty < L + L n^{1/2}$.

Let~$L > 2\sup D$. Since any nonzero~$v \in L\Z^n$ is such
that~$\|v\| \geq L$, the graph~$G = G(\R^n / L\Z^n, D)$ is
loopless. We show that~$x$, $y \in \R^n / L\Z^n$ are adjacent in~$G$
if and only if~$d(x, y) \in D$, so~$G$ is a distance
graph. Since~$D$ is closed, this will moreover imply that the edge set
of~$G$ is closed and then, since the torus is metrizable, from
Theorem~\ref{thm:metrizable-li} it will follow that~$G$ is locally
independent.

If~$d(x, y) \in D$, then immediately we have that~$x$, $y$ are
adjacent. So suppose that~$x$, $y$ are adjacent, that is, that there
is~$v \in L\Z^n$ such that~$\|x-y+v\| \in D$. Claim:
$d(x, y) = \|x-y+v\|$. Indeed, take~$w \in \R^n / L\Z^n$, $w \neq
v$. Note that $\|x-y+v\|_\infty \leq \|x-y+v\| \leq \sup D < L / 2$
and that~$\|w-v\|_\infty \geq L$. So
\[
  \|x-y+w\| \geq \|x-y+w\|_\infty
  = \|x-y+v+(w-v)\|_\infty > L / 2,
\]
proving the claim.
\end{proof}

The independence numbers of the graphs~$G(\R^n / L\Z^n, D)$ are also
related to the independence density of~$G(\R^n, D)$:

\begin{lemma}
\label{lem:rn-limit-ind}
If~$D \subseteq (0, \infty)$ is bounded, then
\[
\limsup_{L\to\infty} \frac{\alpha_{\vol}(G(\R^n / L\Z^n,
  D))}{\vol(\R^n / L\Z^n)} = \ualpha(G(\R^n, D)),
\]
where~$\vol$ denotes the Lebesgue measure.
\end{lemma}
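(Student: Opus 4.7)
My plan is to prove the two inequalities separately; in both directions the key is to move between periodic subsets of $\R^n$ and measurable subsets of the torus $\R^n/L\Z^n$ while preserving independence, using the boundedness of $D$ to prevent any ``wrap-around'' adjacencies.

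For the inequality $\ualpha(G(\R^n, D)) \geq \limsup_{L\to\infty} \alpha_{\vol}(G(\R^n/L\Z^n, D))/\vol(\R^n/L\Z^n)$, I would fix $L > \sup D$ and a measurable independent set $I$ in $G(\R^n/L\Z^n, D)$. Its lift $\tilde I = I + L\Z^n \subseteq \R^n$ is periodic with periodicity lattice $L\Z^n$. Independence of $\tilde I$ in $G(\R^n, D)$ is immediate: if $x$, $y \in \tilde I$ projected to distinct classes, nonadjacency in the torus (with $v = 0$) gives $\|x-y\| \notin D$; if they project to the same class, then $x - y \in L\Z^n$ has norm $\geq L > \sup D$, hence $\|x-y\| \notin D$. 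The upper density of $\tilde I$ equals $\vol(I)/L^n$, so $\ualpha(G(\R^n,D)) \geq \vol(I)/L^n$; taking the sup over $I$ and then the $\limsup$ over $L$ gives the desired bound.

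For the reverse inequality, I would fix $\epsilon > 0$ and choose a Lebesgue-measurable independent set $I \subseteq \R^n$ in $G(\R^n, D)$ with $\ud(I) \geq \ualpha(G(\R^n,D)) - \epsilon$. After translating $I$ (which preserves independence and upper density), I may assume the supremum in the definition of $\ud(I)$ is essentially attained at $p = 0$, so there is a sequence $T_k \to \infty$ with $\vol(I \cap [-T_k, T_k]^n)/(2T_k)^n \to \ud(I)$. Write $r = \sup D$, set $L_k = 2T_k$, and define $I_k = I \cap [-T_k + r, T_k - r]^n$, viewed as a subset of the torus $\R^n/L_k\Z^n$ via the natural embedding of the fundamental domain. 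I then verify that $I_k$ is independent in $G(\R^n/L_k\Z^n, D)$: for $x$, $y \in I_k$, any candidate edge witness $v \in L_k \Z^n$ with $\|x-y+v\| \in D$ must satisfy $\|x-y+v\|_\infty \leq r$; the choice $v = 0$ gives $\|x-y\| \in D$, contradicting independence of $I$, while any nonzero $v$ satisfies $\|v\|_\infty \geq L_k = 2T_k$ and $\|x-y\|_\infty \leq 2(T_k - r)$, forcing $\|x-y+v\|_\infty \geq 2r > r$.

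Finally, to bound the density from below, I would estimate
\[
\frac{\alpha_{\vol}(G(\R^n/L_k\Z^n, D))}{L_k^n} \geq \frac{\vol(I_k)}{L_k^n} \geq \frac{\vol(I \cap [-T_k,T_k]^n)}{(2T_k)^n} - \frac{(2T_k)^n - (2T_k - 2r)^n}{(2T_k)^n},
\]
and since the second term is $O(r/T_k) \to 0$, the right-hand side tends to $\ud(I) \geq \ualpha(G(\R^n,D)) - \epsilon$. Taking the $\limsup$ over $L = L_k$ and letting $\epsilon \to 0$ yields the matching inequality. The main subtlety I expect is the bookkeeping for the upper density: handling the translation parameter $p$ cleanly and justifying the reduction to $p = 0$, plus ensuring the buffer of width $r$ is enough to rule out every spurious torus edge. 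Both issues are controlled purely by the boundedness of $D$, so no further ideas should be required.
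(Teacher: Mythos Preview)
Your proposal is correct and follows essentially the same approach as the paper: lift torus independent sets to periodic sets in~$\R^n$ for the `$\leq$' direction, and for the `$\geq$' direction intersect a near-optimal independent set with a large cube, erase a border of width~$r = \sup D$, and push the result to the torus. One small slip: in the first direction, the parenthetical ``(with $v=0$)'' is not quite right---if $x,y\in\tilde I$ project to $\bar x,\bar y$ then $x-y=\bar x-\bar y+w$ for some $w\in L\Z^n$, and you need nonadjacency with $v=w$, not $v=0$; but since torus nonadjacency rules out \emph{all} $v\in L\Z^n$, the conclusion stands.
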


It is well known that the densities of periodic sphere packings
approximate the sphere-packing density arbitrarily
well~\cite[Appendix~A]{CohnE2003}. The proof of the lemma above is
very similar to the proof of this fact.

\begin{proof}
Any independent set in~$G(\R^n / L\Z^n, D)$ gives rise to a periodic
independent set in $G(\R^n, D)$, so the~`$\leq$' inequality is
immediate. Let us then prove the reverse inequality.
  
If~$D = \emptyset$, the statement is trivial. So assume~$D \neq
\emptyset$, write~$r = \sup D$, and let~$I \subseteq \R^n$ be a
measurable independent set. From the definition of upper density, for
every~$\epsilon > 0$ there is a point~$p \in \R^n$ such that for
every~$L_0 \geq 0$ there is~$L \geq L_0$ with
\begin{equation}
\label{eq:large-density}
\biggl|\frac{\vol(I \cap (p + [-L/2, L/2]^n))}{\vol [-L/2,L/2]^n} -
\ud(I)\biggr| < \epsilon / 2.
\end{equation}

Now take~$L > 2r$ satisfying~\eqref{eq:large-density} and write~$X = I
\cap (p + [-L/2 + r, L/2 - r]^n)$; in words,~$X$ is obtained from~$I
\cap (p + [-L/2, L/2]^n)$ by erasing a border of width~$r$ around the
facets of the hypercube. Then consider the set
\[
I' = \bigcup_{v \in L\Z^n} X + v.
\]
The set~$I'$ is, by construction, periodic with periodicity
lattice~$L\Z^n$, measurable, and independent. If moreover we take~$L$
large enough compared to~$r$, then the volume of the border that was
erased is negligible compared to the volume of the hypercube, and so
using~\eqref{eq:large-density} we can make sure that $|\ud(I') -
\ud(I)| < \epsilon$. Since~$I$ is an arbitrary measurable independent
set, we just proved that for any~$\epsilon > 0$ and any~$L_0 \geq 0$
there is~$L \geq L_0$ such that
\[
\biggl|\frac{\alpha_{\vol}(G(\R^n / L\Z^n, D))}{\vol(\R^n /
  L\Z^n)} - \ualpha(G(\R^n, D))\biggr| < \epsilon,
\]
establishing the reverse inequality.
\end{proof}

%=====================================================================

\subsection{Some harmonic analysis}
\label{sec:harmonic}

This is a good place to gather some notation and basic facts about
harmonic analysis, which will be used next to extend
Theorem~\ref{thm:cp-exactness} to $G(\R^n, D)$; harmonic analysis will
again be used in~\S\S\ref{sec:rn-bounds}
and~\ref{sec:multiple-avoid}.  For background, see e.g.\ the book by
Reed and Simon~\cite{ReedS1975}. \textsl{In this section, functions
  are complex-valued unless stated otherwise.}

A function~$f \in L^\infty(\R^n)$ is said to be of \defi{positive
  type} if~$f(x) = \overline{f(-x)}$ for all~$x \in \R^n$ and if for
every~$\rho \in L^1(\R^n)$ we have
\[
\int_{\R^n} \int_{\R^n} f(x-y) \rho(x) \overline{\rho(y)}\, dy dx \geq
0.
\]
A continuous function~$f\colon\R^n \to \C$ is of positive type if and
only if for every finite~$U \subseteq \R^n$ the matrix
\[
\bigl(f(x - y)\bigr)_{x, y \in U}
\]
is (Hermitian) positive semidefinite. This characterization shows that
if~$f$ is a continuous function of positive type, then~$\|f\|_\infty =
f(0)$, since for every $x \in \R^n$ the matrix
\[
\begin{pmatrix}
  f(0)&f(x)\\
  f(-x)&f(0)
\end{pmatrix}
\]
is positive semidefinite and hence~$|f(x)| \leq f(0)$. The set of all
functions of positive type is a closed and convex cone, which we
denote by~$\psd(\R^n)$.

Bochner's theorem says that functions of positive type are exactly the
Fourier transforms of finite measures: a continuous
function~$f\colon\R^n \to \C$ is of positive type if and only if
\begin{equation}
\label{eq:bochners-theorem}
f(x) = \int_{\R^n} e^{iu\cdot x}\, d\nu(u)
\end{equation}
for some finite (positive) Borel measure~$\nu$, with the integral
converging uniformly\footnote{For every~$\epsilon > 0$, there is a
  compact set~$B \subseteq \R^n$ such that
  $\bigl|f(x) - \int_B e^{iu\cdot x}\, d\nu(u)\bigr| < \epsilon$ for
  all~$x \in \R^n$.} over~$\R^n$.

A continuous function of positive type~$f\colon\R^n \to \C$ has a
well-defined \defi{mean value}
\[
M(f) = \lim_{T \to \infty} \frac{1}{\vol [-T,T]^n} \int_{[-T,T]^n}
f(x)\, dx,
\]
and if~$\nu$ is the measure in~\eqref{eq:bochners-theorem}, then
$M(f) = \nu(\{0\})$. To see this last identity, for~$T > 0$
and~$u \in \R^n$, write
\[
  g_T(u) = \frac{1}{\vol [-T, T]^n} \int_{[-T, T]^n} e^{i u \cdot x}\,
  dx.
\]
Let~$g\colon \R^n \to \R$ be the function such that~$g(0) = 1$
and~$g(u) = 0$ for all nonzero $u \in\nobreak \R^n$. Then~$g$ is the
pointwise limit of~$g_T$ as~$T \to \infty$.
Moreover,~$|g_T(u)| \leq 1$ for all~$u$, and the constant one function
is integrable with respect to the measure~$\nu$, since~$\nu$ is
finite. So we may use Lebesgue's dominated convergence theorem, and
together with~\eqref{eq:bochners-theorem} we get
\[
  M(f) = \lim_{T \to \infty} \int_{\R^n} g_T(u)\, d\nu(u)
  = \int_{\R^n} g(u)\, d\nu(u) = \nu(\{0\}).
\]

A function~$f\colon\R^n \to \C$ is \defi{periodic} if there is a
lattice~$\Lambda \subseteq \R^n$ whose action leaves~$f$ invariant,
that is, $f(x + v) = f(x)$ for all~$x \in \R^n$ and~$v \in \Lambda$;
in this case we say that~$\Lambda$ is a \defi{periodicity lattice}
of~$f$. If~$f$ is periodic with periodicity lattice~$\Lambda$, then
\[
M(f) = \frac{1}{\vol(\R^n / \Lambda)} \int_{\R^n / \Lambda} f(x)\, dx.
\]
So we may equip $L^2(\R^n / \Lambda)$ with the inner product
\[
(f, g) = \vol(\R^n / \Lambda) M(x \mapsto f(x) \overline{g(x)}).
\]
Then the functions $x\mapsto e^{i u \cdot x}$, for~$u \in 2\pi\Lambda^*$
where
\[
\Lambda^* = \{\, v \in \R^n : \text{$u \cdot v \in \Z$ for all~$u \in
  \Lambda$}\,\}
\]
is the \defi{dual lattice} of~$\Lambda$, form a complete orthogonal
system of~$L^2(\R^n / \Lambda)$. Given~$f \in L^2(\R^n / \Lambda)$
and~$u \in 2\pi\Lambda^*$, the \defi{Fourier coefficient} of~$f$
at~$u$ is
\[
  \widehat{f}(u) = \frac{1}{\vol(\R^n / \Lambda)} (f, x \mapsto e^{i u
    \cdot x}).
\]
We then have that
\[
f(x) = \sum_{u\in 2\pi\Lambda^*} \widehat{f}(u) e^{i u \cdot x}
\]
with convergence in $L^2$ norm, and from this follows \defi{Parseval's
  identity}: if~$f$, $g \in L^2(\R^n / \Lambda)$, then
\[
(f, g) = \sum_{u \in 2\pi\Lambda^*} \widehat{f}(u)
\overline{\widehat{g}(u)}.
\]

%=====================================================================

\subsection{An exact completely positive formulation}
\label{sec:rn-cp-formulation}

Let~$D \subseteq (0, \infty)$ be a set of forbidden distances and
$\Kcal(\R^n) \subseteq \psd(\R^n)$ be a convex cone; consider the
optimization problem
\begin{equation}
\label{eq:rn-cp-problem}
\optprob{
  \text{maximize}&M(f)\\
  &f(0) = 1,\\
  &f(x) = 0&\text{if~$\|x\| \in D$},\\
  &\multicolumn{2}{l}{\text{$f\colon\R^n \to \R$ is continuous and~$f
      \in \Kcal(\R^n)$.}}
}
\end{equation}
We denote both the problem above and its optimal value
by~$\vartheta(G(\R^n, D), \Kcal(\R^n))$. Notice that,
since~$\Kcal(\R^n) \subseteq \psd(\R^n)$, every~$f \in \Kcal(\R^n)$
has a mean value, so the objective function is well defined.

Again, there are at least two cones that can be put in place
of~$\Kcal(\R^n)$. One is the cone~$\psd(\R^n)$ of functions of
positive type. The other is the cone of \defi{real-valued
  completely positive functions} on~$\R^n$, namely
\begin{multline*}
  \Ccal(\R^n) = \cl\{\, f \in L^\infty(\R^n): \text{$f$ is real valued
    and continuous}\\ \text{and
    $\bigl(f(x-y)\bigr)_{x,y\in U} \in \Ccal(U)$ for all
    finite~$U \subseteq \R^n$}\,\},
\end{multline*}
where the closure is taken in the~$L^\infty$ norm; note
that~$\Ccal(\R^n)$ is a cone contained in~$\psd(\R^n)$.

\begin{theorem}
\label{thm:rn-exactness}
If~$D \subseteq (0, \infty)$ is closed, then $\vartheta(G(\R^n, D),
\Ccal(\R^n)) = \ualpha(G(\R^n, D))$.
\end{theorem}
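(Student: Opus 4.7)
The plan is to reduce the problem on~$\R^n$ to the compact tori~$\R^n/L\Z^n$, where Theorem~\ref{thm:cp-exactness} applies, and then to pass to the limit $L\to\infty$ via Lemma~\ref{lem:rn-limit-ind}. A torus acts continuously and transitively on itself by translation; its normalized Lebesgue measure is the pushforward of the Haar measure, and its quotient metric is both bi-invariant and a density metric, so all the hypotheses of Theorem~\ref{thm:cp-exactness} hold. Throughout we assume $D$ is bounded; the unbounded case follows because the theorem of Furstenberg, Katznelson, and Weiss gives $\ualpha(G(\R^n,D))=0$, and the same argument applied to the truncations $D\cap[0,R]$ as $R\to\infty$ gives $\vartheta\leq 0$.

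For the primal inequality $\vartheta(G(\R^n,D),\Ccal(\R^n))\geq\ualpha(G(\R^n,D))$, fix $\epsilon>0$; Lemma~\ref{lem:rn-limit-ind} furnishes some $L>2\sup D$ and a measurable independent set $I\subseteq\R^n/L\Z^n$ of density at least $\ualpha(G(\R^n,D))-\epsilon$. Define
\[
  f(x) = \frac{1}{\vol(I)}\int_{\R^n/L\Z^n}\chi_I(y)\,\chi_I(y+x)\,dy,
\]
viewed as an $L\Z^n$-periodic function on~$\R^n$. Then $f$ is continuous (autocorrelation of an $L^2$ function on the torus), $f(0)=1$, and $M(f)=\vol(I)/L^n$. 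When $\|x\|\in D$ we have $\|x\|\leq L/2$, so the torus-distance from~$y$ to~$y+x$ equals $\|x\|\in D$ and the two points are adjacent; the independence of~$I$ then kills the integrand everywhere, giving $f(x)=0$. For complete positivity, the substitution $z=y-x_j$ expresses the finite matrix $(f(x_i-x_j))_{i,j}$ as $(\vol I)^{-1}\int(\chi_I(z+x_i))_i(\chi_I(z+x_j))_j^T\,dz$, a Riemann-integrable limit of nonnegative rank-one outer products, so $f\in\Ccal(\R^n)$ by the finite-submatrix characterization. Letting $\epsilon\to 0$ closes this direction.

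For the reverse inequality, let~$f$ be any feasible solution and write $r=\sup D$. For large~$T$ put $L=2T+2r+2$ and pick a continuous bump $\phi_T\colon\R^n\to[0,1]$ equal to~$1$ on $[-T,T]^n$ and supported in $[-T{-}1,T{+}1]^n\subset(-L/2,L/2)^n$. On the torus $V=\R^n/L\Z^n$ define
\[
  A(x,y) = \frac{L^n}{\|\phi_T\|_{L^2}^2}\,f(\tilde x-\tilde y)\,\phi_T(\tilde x)\,\phi_T(\tilde y),
\]
where $\tilde x,\tilde y$ are the representatives of~$x,y$ in $[-L/2,L/2]^n$. Because $\phi_T$ is supported strictly inside the fundamental domain, the would-be discontinuity of the representative map is suppressed by $\phi_T$ and~$A$ is continuous on $V\times V$; each finite submatrix of~$A$ is the Hadamard product of the CP matrix coming from~$f$ with the rank-one nonnegative matrix $\phi_T\otimes\phi_T^*$, so $A\in\Ccal(V)$ by Theorem~\ref{thm:finite-subkernel-condition}; the buffer $L-2(T+1)=2r$ forbids wrap-around adjacencies among points in the support of $\phi_T$, so whenever $A(x,y)\neq 0$ and $(x,y)$ is an edge of $G(V,D)$ we must have $\|\tilde x-\tilde y\|\in D$, where~$f$ vanishes; the constant is tuned so that $\int A(x,x)\,d\omega(x)=1$ with $\omega=\vol/L^n$. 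Theorem~\ref{thm:cp-exactness} on the torus and the easy half of Lemma~\ref{lem:rn-limit-ind} together give
\[
  \langle J,A\rangle \leq \alpha_\omega(G(V,D)) \leq \ualpha(G(\R^n,D)).
\]
Writing $f(x)=\int e^{iu\cdot x}\,d\nu(u)$ via Bochner's theorem, with~$\nu$ a probability measure whose point mass at~$0$ equals $M(f)$, a direct calculation (change of variables $z=x-y$, then Plancherel) gives
\[
  \langle J,A\rangle = \frac{1}{L^n\|\phi_T\|_{L^2}^2}\int_{\R^n}|\widehat{\phi_T}(u)|^2\,d\nu(u);
\]
the integrand is bounded uniformly in~$T$ by $|\widehat{\phi_T}(0)|^2/(L^n\|\phi_T\|_{L^2}^2)=(2T)^n/L^n$, which tends to~$1$, and Fej\'er-concentrates at $u=0$, so dominated convergence yields $\lim_{T\to\infty}\langle J,A\rangle=\nu(\{0\})=M(f)$. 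The main obstacle will be assembling~$A$ so that continuity, membership in $\Ccal(V)$, feasibility, and the Fej\'er-type limit all hold at once; placing the bump strictly inside the fundamental domain with a buffer of exactly $2r$ is what makes everything consistent.
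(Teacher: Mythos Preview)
Your proposal is correct and follows the same overall strategy as the paper: reduce to the torus $\R^n/L\Z^n$, invoke Theorem~\ref{thm:cp-exactness} there, and pass to the limit via Lemma~\ref{lem:rn-limit-ind}. Both directions match the paper's assertions (A1) and (A2) in~\S\ref{sec:rn-cp-formulation}.

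The one genuine difference is in the~`$\leq$' direction. The paper builds the torus kernel from a \emph{sharp} cutoff $F(x,y)=f(x-y)\chi_{W'_L}(x)\chi_{W'_L}(y)$, which is discontinuous; it then has to apply the Reynolds operator and invoke Mercer's theorem (via a spectral decomposition of~$H$) to show that $\rey(F)$ is continuous. Your smooth bump~$\phi_T$ sidesteps this entirely: the kernel is continuous from the start, and membership in~$\Ccal(V_L)$ follows directly from the Schur-product argument and Theorem~\ref{thm:finite-subkernel-condition}. This is a real simplification. The price is that your Fej\'er-type limit $\langle J,A\rangle\to M(f)$ needs a pointwise-decay argument for $|\widehat{\phi_T}(u)|^2/(L^n\|\phi_T\|_2^2)$ at each $u\neq 0$, which you assert but do not fully spell out (it follows from $|\widehat{\phi_T}(u)|=O(T^{n-1})$ for fixed $u\neq 0$, obtained by writing $\phi_T=\chi_{[-T,T]^n}+O_{L^1}(T^{n-1})$); the paper instead proves the analogous limit as a separate lemma via the Bochner representation of $g(x,y)=f(x-y)$. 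Also note that your displayed equality $|\widehat{\phi_T}(0)|^2/(L^n\|\phi_T\|_2^2)=(2T)^n/L^n$ is only asymptotic, not exact, for the smooth bump; it suffices that the ratio is bounded.
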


Write~$G = G(\R^n, D)$ for short.  Since~$D$ is closed and does not
contain~0, Theorem~\ref{thm:metrizable-li} implies that~$G$ is locally
independent. Recall that, if~$D$ is unbounded, then a theorem of
Furstenberg, Katznelson, and Weiss~\cite{FurstenbergKW1990} implies
that~$\ualpha(G) = 0$. In this case, one can show that
$\vartheta(G, \Ccal(\R^n)) = 0$; actually,
$\vartheta(G, \psd(\R^n)) = 0$, as shown by Oliveira and
Vallentin~\cite[Theorem~5.1]{OliveiraV2010} (see
also~\S\ref{sec:multiple-avoid} below).

To prove the theorem we may therefore assume that~$D$ is bounded and
nonempty.  Write~$r = \sup D$, and for~$L > 2r$
write~$V_L = \R^n / L\Z^n$; note~$V_L$ is a compact Abelian
group. Lemma~\ref{lem:compactification-li} says that~$G_L = G(V_L, D)$
is locally independent.  Since~$V_L$ is metrizable via the
bi-invariant metric~\eqref{eq:torus-metric}, by
taking~$V = \Gamma = V_L$ and letting~$\omega$ be the Lebesgue measure
on~$V_L$, the graph~$G_L$ satisfies the hypotheses of
Theorem~\ref{thm:cp-exactness}, and so
\[
\vartheta(G_L, \Ccal(V_L)) = \alpha_{\vol}(G_L).
\]
Lemma~\ref{lem:rn-limit-ind} then implies that
\begin{equation}
\label{eq:rn-limit-theta}
\limsup_{L\to\infty} \frac{\vartheta(G_L, \Ccal(V_L))}{\vol V_L}
= \ualpha(G).
\end{equation}

So to prove Theorem~\ref{thm:rn-exactness} it suffices to show that
the limit above is equal to $\vartheta(G, \Ccal(\R^n))$. The
proof of this fact is a bit technical, but the main idea is simple; we
prove the following two assertions:

\begin{enumerate}
\item[(A1)] If~$A$ is a feasible solution of~$\vartheta(G_L,
  \Ccal(V_L))$ for~$L > 2r$, then there is a feasible solution~$f$ of
  $\vartheta(G, \Ccal(\R^n))$ such that $M(f) = (\vol V_L)^{-1}
  \langle J, A \rangle$.

\item[(A2)] If~$f$ is a feasible solution of
  $\vartheta(G, \Ccal(\R^n))$, then for every~$L > 2r$ there is a
  feasible solution~$A_L$ of $\vartheta(G_L, \Ccal(V_L))$ and
  $(\vol V_L)^{-1} \langle J, A_L\rangle \to M(f)$ as $L \to \infty$.
\end{enumerate}

\noindent
The first assertion establishes that the limit
in~\eqref{eq:rn-limit-theta} is~$\leq \vartheta(G, \Ccal(\R^n))$; the
second assertion establishes the reverse inequality.

To prove~(A1), fix~$L > 2r$ and let~$A$ be a feasible solution of
$\vartheta(G_L, \Ccal(V_L))$. By applying the Reynolds operator
to~$A$ if necessary, we may assume that~$A$ is invariant under the
action of~$V_L$, that is, $A(x + z, y + z) = A(x, y)$ for all~$x$, $y$,
$z \in V_L$. Indeed, if~$A$ is feasible, then~$\rey(A)$ is also
feasible, and to see this it suffices to show that~$\rey(A)$ is
continuous, since the other constraints are easily seen to be
satisfied. But the continuity of~$\rey(A)$ follows from
Lemma~\ref{lem:reynolds-continuous-easy}, since~$V_L$ is metrizable
via the invariant metric~\eqref{eq:torus-metric}.

Since~$A$ is invariant, there is a function~$g\colon V_L \to \R$ such
that
\[
A(x, y) = g(x - y)\qquad\text{for all~$x$, $y \in V_L$}.
\]
Then:

\begin{enumerate}
\item[(i)] $g$ is continuous;

\item[(ii)] since~$L > 2r$, if~$x \in \R^n$ is such
  that~$\|x\| \in D$, then~$x$ lies in the fundamental domain
  of~$L\Z^n$ with respect to the canonical basis vectors, and
  so~$g(x) = A(0, x) = 0$ since~$0$ and~$x$ are adjacent in~$G_L$;

\item[(iii)] since~$A \in \Ccal(V_L)$, using
  Theorem~\ref{thm:finite-subkernel-condition} we see
  that~$g \in \Ccal(\R^n)$;

\item[(iv)] since~$A$ is invariant, its diagonal is constant, and then
  since~$\tr A = 1$ we have~$g(0) = (\vol V_L)^{-1}$.
\end{enumerate}

This all implies that $f = (\vol V_L) g$ is a feasible solution of
$\vartheta(G, \Ccal(\R^n))$; all that is left to do is to
compute~$M(f)$. Since~$g$ is periodic, its mean value is the integral
of~$g$ on the fundamental domain~$F$ of the periodicity lattice
divided by the volume of~$F$, hence
\[
\langle J, A\rangle = \int_{V_L} \int_{V_L} g(x-y)\, dy dx
= \int_{V_L} \int_{V_L} g(y)\, dy dx = (\vol V_L)^2 M(g),
\]
and we get $M(f) = (\vol V_L) M(g) = (\vol V_L)^{-1} \langle J,
A\rangle$, as we wanted.

To prove~(A2), let~$f$ be a feasible solution of $\vartheta(G,
\Ccal(\R^n))$ and fix~$L > 2r$. Let~$W_L = [-L/2, L/2]^n$ and consider
the kernel $H\colon W_L \times W_L \to \R$ such that $H(x, y) = f(x -
y)$. Note~$H$ is continuous and, since~$f \in \Ccal(\R^n)$, using
Theorem~\ref{thm:finite-subkernel-condition} we see that~$H \in
\Ccal(W_L)$.

Let~$W'_L = [-L/2 + r, L/2 - r]^n$ and consider the kernel $F\colon
V_L \times V_L \to \R$ such that
\[
F(x, y) =
\begin{cases}
  H(x, y)&\text{if~$x$, $y \in W_L'$};\\
  0&\text{otherwise}.
\end{cases}
\]
If~$x$, $y \in V_L$ are adjacent in~$G_L$, then~$F(x, y) = 0$. Indeed,
if either~$x$ or~$y$ is not in~$W'_L$, then~$F(x, y) = 0$. If~$x$, $y
\in W'_L$, then~$\|x-y\|_\infty \leq L - 2r$ and, if~$v \in L\Z^n$ is
nonzero, then~$\|v\|_\infty \geq L$ and $\|x-y+v\|_\infty \geq 2r >
r$, whence $\|x-y+v\| \notin D$. But then if~$x$ and~$y$ are adjacent,
we must have~$\|x-y\| \in D$ and $F(x, y) = H(x, y) = f(x-y) = 0$.

Now~$F$ is not continuous, but~$\rey(F)$ is; here is a
proof. Since~$H$ is continuous and positive
(recall~$H \in \Ccal(W_L)$), Mercer's theorem says that there are
continuous functions $\phi_i\colon W_L \to \R$ with~$\|\phi_i\| = 1$
and numbers~$\lambda_i \geq 0$ for~$i = 1$, 2, \dots\ such
that~$\sum_{i=1}^\infty \lambda_i < \infty$ and
\[
H(x, y) = \sum_{i=1}^\infty \lambda_i \phi_i(x) \phi_i(y) =
\sum_{i=1}^\infty \lambda_i (\phi_i \otimes \phi_i^*)(x, y)
\]
with absolute and uniform convergence over~$W_L \times W_L$.

For~$i = 1$, 2, \dots\ define the function $\psi_i\colon V_L \to \R$
by setting
\[
\psi_i(x) =
\begin{cases}
  \phi_i(x)&\text{if $x \in W'_L$};\\
  0&\text{otherwise}.
\end{cases}
\]
Then
\[
F(x, y) = \sum_{i=1}^\infty \lambda_i \psi_i(x) \psi_i(y)
= \sum_{i=1}^\infty \lambda_i (\psi_i \otimes \psi_i^*)(x, y).
\]
We show now that the series
\[
\sum_{i=1}^\infty \lambda_i \rey(\psi_i \otimes \psi_i^*)(x, y)
\]
converges absolutely and uniformly over~$V_L \times V_L$ and, since
$\rey(\psi_i \otimes \psi_i^*)$ is continuous by
Lemma~\ref{lem:reynolds-continuous}, this will imply that~$\rey(F)$ is
continuous.

For~$u \in V_L$ and~$\psi\colon V_L \to \R$, write~$\psi_u$ for the
function such that~$\psi_u(x) = \psi(x + u)$. Then
\[
\rey(\psi_i \otimes \psi_i^*)(x, y)
= \frac{1}{\vol V_L} \int_{V_L} \psi_i(x+z) \psi_i(y+z)\, dz
= \frac{1}{\vol V_L} ((\psi_i)_x, (\psi_i)_y).
\]
Now $|((\psi_i)_x, (\psi_i)_y)| \leq \|\psi_i\|^2 \leq \|\phi_i\|^2 =
1$, so
\[
\sum_{i=1}^\infty |\lambda_i ((\psi_i)_x, (\psi_i)_y)| \leq
\sum_{i=1}^\infty \lambda_i < \infty,
\]
establishing absolute convergence. For uniform convergence, note that
given~$\epsilon > 0$ there is~$m \geq 1$ such that~$\sum_{i=m}^\infty
\lambda_i < \epsilon$. But then
\[
\sum_{i=m}^\infty |\lambda_i ((\psi_i)_x, (\psi_i)_y)| \leq
\sum_{i=m}^\infty \lambda_i < \epsilon,
\]
establishing uniform convergence and thus finishing the proof
that~$\rey(F)$ is continuous.

Now that we know that~$\rey(F)$ is continuous, we can show
that~$\rey(F) \in \Ccal(V_L)$. Indeed, since~$H$ is continuous and
belongs to~$\Ccal(W_L)$, using
Theorem~\ref{thm:finite-subkernel-condition} it is straightforward to
show that, if~$U \subseteq V_L$ is finite, then $F[U] \in \Ccal(U)$
and hence also $\rey(F)[U] \in \Ccal(U)$. But then, since~$\rey(F)$ is
continuous, Theorem~\ref{thm:finite-subkernel-condition} implies
that~$\rey(F) \in \Ccal(V_L)$.

So far we can conclude that $A_L = (\tr\rey(F))^{-1} \rey(F)$ is a
feasible solution of $\vartheta(G_L, \Ccal(V_L))$. To
estimate~$\langle J, A_L\rangle$ we use the following fact.

\begin{lemma}
If~$f\colon\R^n \to \C$ is continuous and of positive type, then
\begin{equation}
\label{eq:double-integral-mean-value}
\lim_{T\to\infty} \frac{1}{(\vol [-T, T]^n)^2} \int_{[-T, T]^n}
\int_{[-T, T]^n} f(x-y)\, dy dx = M(f).
\end{equation}
\end{lemma}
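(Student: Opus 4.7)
The plan is to reduce the statement to a computation via Bochner's theorem. Since $f$ is continuous and of positive type, there is a finite Borel measure $\nu$ on $\R^n$ with
\[
f(x) = \int_{\R^n} e^{iu\cdot x}\, d\nu(u),
\]
and the identification $M(f) = \nu(\{0\})$ is already noted in the paper. The strategy is to substitute this spectral representation into the double integral on the left-hand side, swap the order of integration, and evaluate the resulting inner integral explicitly.

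After swapping (which Fubini permits since $|e^{iu\cdot(x-y)}| = 1$ and $\nu$ is finite, so the integrand is integrable against the product measure on $[-T,T]^n \times [-T,T]^n \times \R^n$), the inner double integral separates as
\[
\int_{[-T, T]^n} \int_{[-T, T]^n} e^{iu\cdot(x - y)}\, dy\, dx = \biggl|\int_{[-T, T]^n} e^{iu\cdot x}\, dx\biggr|^2,
\]
which, after dividing by $(\vol [-T,T]^n)^2 = (2T)^{2n}$, equals
\[
g_T(u) = \prod_{j=1}^n \biggl(\frac{\sin(u_j T)}{u_j T}\biggr)^2
\]
with the convention that the $j$-th factor is $1$ when $u_j = 0$. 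Thus the whole left-hand side becomes $\int_{\R^n} g_T(u)\, d\nu(u)$.

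The conclusion is then an application of the dominated convergence theorem. One has $0 \le g_T(u) \le 1$ for every $u$ and every $T$, and the constant $1$ is $\nu$-integrable because $\nu$ is finite. For the pointwise limit, $g_T(0) = 1$ for all $T$, while for any $u \ne 0$ at least one coordinate $u_j$ is nonzero and $(\sin(u_j T)/(u_j T))^2 \to 0$, the remaining factors being bounded by $1$; hence $g_T(u) \to \chi_{\{0\}}(u)$ pointwise. Dominated convergence therefore yields
\[
\lim_{T\to\infty}\int_{\R^n} g_T(u)\, d\nu(u) = \int_{\R^n} \chi_{\{0\}}(u)\, d\nu(u) = \nu(\{0\}) = M(f),
\]
which is exactly~\eqref{eq:double-integral-mean-value}. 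There is no real obstacle: the only point requiring care is the Fubini exchange (handled by the boundedness of the exponential and the finiteness of $\nu$), and the mild book-keeping needed to check that $g_T \to \chi_{\{0\}}$ holds at partially-zero coordinates.
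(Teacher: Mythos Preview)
Your proof is correct. It differs from the paper's argument in an instructive way: the paper lifts $f$ to the kernel $g(x,y)=f(x-y)$ on $\R^{2n}$, identifies its Bochner measure $\mu$ as the pushforward of $\nu$ under $u\mapsto(u,-u)$, and then simply quotes the already-established identity $M(g)=\mu(\{(0,0)\})$ to read off the answer. You instead work directly: substitute Bochner's representation for $f$, swap the integrals by Fubini, compute the resulting Dirichlet-type kernel as a product of $\mathrm{sinc}^2$ factors, and pass to the limit by dominated convergence. The paper's route is shorter because it recycles the general mean-value formula, but it requires checking that $g$ is of positive type on $\R^{2n}$ and identifying its representing measure; your route is more elementary and self-contained, at the cost of redoing a special case of that formula by hand.
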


\begin{proof}
The function $g\colon\R^n \times \R^n \to \C$ such
that~$g(x, y) = f(x - y)$ is continuous and of positive type. Indeed,
let~$\nu$ be the measure given by Bochner's theorem such
that~\eqref{eq:bochners-theorem} holds and consider the Borel
measure~$\mu$ on~$\R^n \times \R^n$ such that
\[
\mu(X) = \nu(\{\,u \in \R^n : (u, -u) \in X\,\})
\]
for all measurable~$X \subseteq \R^n \times \R^n$.  Then~$\mu$ is a
finite measure and
\[
g(x, y) = f(x-y) = \int_{\R^n} e^{i u \cdot (x-y)}\, d\nu(u)
= \int_{\R^n \times \R^n} e^{i (u \cdot x + v \cdot y)}\, d\mu(u, v),
\]
so~$\mu$ is the measure representing~$g$. But then the left-hand
side of~\eqref{eq:double-integral-mean-value} is~$M(g) = \mu(\{(0,
0)\}) = \nu(\{0\}) = M(f)$.
\end{proof}

Now note that
\[
\tr \rey(F) = \int_{V_L} F(x, x)\, dx = (\vol W'_L) f(0) = \vol W'_L.
\]
Since~$r$ is fixed,
\[
\lim_{L \to \infty} \frac{\vol W'_L}{\vol V_L} = 1.
\]
So using the lemma above we get
\[
\begin{split}
  \lim_{L \to \infty} (\vol V_L)^{-1} \langle J, A_L\rangle &=
  \lim_{L \to \infty} \frac{1}{\vol V_L} \int_{V_L} \int_{V_L} A_L(x,
  y)\, dy dx\\
  &=\lim_{L \to \infty} \frac{1}{(\vol V_L)(\vol W'_L)} \int_{W'_L}
  \int_{W'_L} f(x-y)\, dy dx\\
  &=\lim_{L \to \infty} \frac{\vol W'_L}{\vol V_L} \frac{1}{(\vol
    W'_L)^2} \int_{W'_L} \int_{W'_L} f(x-y)\, dy dx\\
  &=M(f),
\end{split}
\]
finishing the proof of~(A2). Here, the second identity follows from
the definition of~$A_L$ and the self-adjointness of the Reynolds
operator.

\begin{proof}[Proof of Theorem~\ref{thm:rn-exactness}]
Follows from (A1) and (A2), proved above.
\end{proof}

%%%%%%%%%%%%%%%%%%%%%%%%%%%%%%%%%%%%%%%%%%%%%%%%%%%%%%%%%%%%%%%%%%%%%%

\section{The Boolean-quadratic cone and polytope}
\label{sec:binary-quadratic}

As was said in~\S\ref{sec:introduction}, one can use valid
inequalities for~$\Ccal(V)$ to strengthen the upper bound provided
by~$\vartheta(G, \psd(V))$. This is one of our goals: to obtain better
upper bounds in some particular cases of interest, like the
unit-distance graph on Euclidean space or distance graphs on the
sphere.

From a practical standpoint, and for reasons that will become clear
soon, instead of using valid inequalities for the completely positive
cone, it is more convenient to use valid inequalities for the
Boolean-quadratic cone. Given a nonempty finite set~$V$, the
\defi{Boolean-quadratic cone} on~$V$ is
\[
\bqc(V) = \cone\{\, f \otimes f^* : f\colon V\to\{0,1\}\,\};
\]
notice that~$\bqc(V) \subseteq \Ccal(V)$. The dual cone of~$\bqc(V)$
is
\begin{multline*}
\bqc^*(V) = \{\, Z\colon V \times V \to \R : \text{$Z$ is symmetric}\\
  \text{and $\langle Z, A\rangle \geq 0$ for all~$A \in \bqc(V)$}\,\}.
\end{multline*}

Now let~$V$ be a compact topological space and~$\omega$ be a finite
Borel measure on~$V$ and consider the cone
 \begin{multline*}
\bqc(V) = \cl\{\, A \in L^2(V \times V) : \text{$A$ is continuous}\\
  \text{and $A[U] \in \bqc(U)$ for all finite~$U
  \subseteq V$}\,\},
\end{multline*}
with the closure taken in the $L^2$-norm topology. In view of
Theorem~\ref{thm:finite-subkernel-condition}, if~$V$ is a compact
Hausdorff space and~$\omega$ is positive on open sets, then~$\bqc(V)
\subseteq \Ccal(V)$.

Let~$V$ be a compact Hausdorff space and~$\omega$ be a finite Borel
measure on~$V$. If~$G = (V, E)$ is a locally independent graph, then
since $\bqc(V) \subseteq \Ccal(V)$ we have
\[
\vartheta(G, \bqc(V)) \leq \vartheta(G, \Ccal(V)).
\]
If~$V$ is finite and~$\omega$ is the counting measure, then recalling
the proof of the inequality
$\vartheta(G, \Ccal(V)) \geq \alpha_\omega(G)$ given
in~\S\ref{sec:conic} we immediately get 
\begin{equation}
\label{eq:bqc-upper}
\vartheta(G, \bqc(V)) \geq \alpha_\omega(G).
\end{equation}
If~$V$ is infinite, it is not clear that~\eqref{eq:bqc-upper} holds;
at least the proof of Theorem~\ref{thm:cp-upper-bound} does not go
through anymore: if~$f\colon V \to \R$ is the continuous
function approximating the characteristic function of the independent
set, then in general it is not true that
$\|f\|^{-2} f \otimes f^* \in \bqc(V)$. If~$G$ and~$\omega$ satisfy
the hypotheses of Theorem~\ref{thm:cp-exactness}, however,
then~\eqref{eq:bqc-upper} holds and we have:

\begin{theorem}
\label{thm:bqc-exactness}
Let~$G = (V, E)$ be a locally independent graph where~$V$ is a compact
Hausdorff space, $\Gamma \subseteq \aut(G)$ be a compact group that acts
continuously and transitively on~$V$, and~$\omega$ be a multiple of
the pushforward of the Haar measure on~$\Gamma$. If~$\Gamma$ is
metrizable via a bi-invariant density metric for the Haar measure,
then~$\vartheta(G, \bqc(V)) = \alpha_\omega(G)$.
\end{theorem}

The proof requires the use of the Reynolds operator on~$V$, namely of
Lemma~\ref{lem:reynolds-continuous}. For this we need a
$\Gamma$-invariant metric on~$V$, whose existence is implied by the
metrizability of~$\Gamma$ via a bi-invariant metric, as shown by the
following lemma.

\begin{lemma}
\label{lem:invariant-V-metric}
Let~$V$ be a compact Hausdorff space and~$\Gamma$ be a compact group
that acts continuously and transitively on~$V$. If~$\Gamma$ is
metrizable via a bi-invariant metric, then~$V$ is metrizable via a
$\Gamma$-invariant metric.
\end{lemma}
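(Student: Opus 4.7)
The plan is to construct the metric on $V$ by descending a bi-invariant metric $d_\Gamma$ on $\Gamma$ through the orbit map. Fix a base point $x_0 \in V$ and let $H = \{\sigma \in \Gamma : \sigma x_0 = x_0\}$. Continuity of the action makes $H$ a closed, hence compact, subgroup. Define
\[
  d_V(\sigma x_0, \tau x_0) = \inf_{h \in H} d_\Gamma(\sigma, \tau h).
\]
Using bi-invariance of $d_\Gamma$ one checks this only depends on the cosets $\sigma H$ and $\tau H$: if $\sigma' = \sigma h_1$ and $\tau' = \tau h_2$ with $h_1, h_2 \in H$, then right-multiplication by $h_1^{-1}$ reduces $d_\Gamma(\sigma h_1, \tau h_2 h)$ to $d_\Gamma(\sigma, \tau h_2 h h_1^{-1})$, and $h_2 h h_1^{-1}$ ranges over all of $H$. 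So $d_V$ is well defined on $V \times V$.

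Next I would verify the metric axioms. Symmetry follows from $d_\Gamma(\sigma, \tau h) = d_\Gamma(\tau, \sigma h^{-1})$ (right invariance), definiteness uses that $H$ is compact so that an infimum-achieving sequence has a convergent subsequence forcing $\sigma \in \tau H$, and the triangle inequality follows from bi-invariance together with the fact that $H$ is a group (inserting an intermediate $\rho x_0$ and using that $h_1 h_2$ ranges over $H$ as $h_2$ does). $\Gamma$-invariance of $d_V$ comes directly from left invariance of $d_\Gamma$: for any $\gamma \in \Gamma$,
\[
  d_V(\gamma \sigma x_0, \gamma \tau x_0) = \inf_{h \in H} d_\Gamma(\gamma \sigma, \gamma \tau h) = \inf_{h \in H} d_\Gamma(\sigma, \tau h) = d_V(\sigma x_0, \tau x_0).
\]

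The step I expect to require the most care is showing that $d_V$ induces the given topology on $V$. Here I would use that the orbit map $p \colon \Gamma \to V$, $p(\sigma) = \sigma x_0$, is continuous and surjective from a compact space to a Hausdorff space, hence is closed and so is a quotient map. For one direction, given an open $U \subseteq V$ containing $x = \sigma x_0$, the preimage $p^{-1}(U)$ is an open neighborhood of the compact set $\sigma H$ in $\Gamma$, so a standard compactness argument produces $\epsilon > 0$ with the $\epsilon$-neighborhood of $\sigma H$ in $d_\Gamma$ contained in $p^{-1}(U)$; this says exactly that $B_{d_V}(x, \epsilon) \subseteq U$. For the other direction, the preimage $p^{-1}(B_{d_V}(x,\epsilon)) = \bigcup_{h \in H} B_{d_\Gamma}(\sigma h, \epsilon)$ is open in $\Gamma$, so because $p$ is a quotient map the ball $B_{d_V}(x,\epsilon)$ is open in $V$. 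Combining these shows the two topologies agree, completing the construction.
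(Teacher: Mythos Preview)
Your proposal is correct and defines exactly the same metric as the paper: the paper sets $d_V(x,y)=\inf\{d_\Gamma(1,\sigma):\sigma x=y\}$, which by right invariance equals your $\inf_{h\in H}d_\Gamma(\sigma,\tau h)$ when $x=\sigma x_0$, $y=\tau x_0$. The only real difference is in the topology check: the paper shows that finite unions of closed $d_V$-balls form a base of closed sets (using that $p_x$ is a closed map and a covering argument for $p_x^{-1}(X)$), whereas you show directly that open $d_V$-balls form a base of open sets via the quotient-map property of $p$ and a compactness argument around the fiber $\sigma H$; your route is a bit more streamlined but the two are equivalent in spirit.
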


\begin{proof}
For~$x \in V$, consider the map $p_x\colon \Gamma \to V$ such that
$p_x(\sigma) = \sigma x$; the continuous action of~$\Gamma$ implies
that~$p_x$ is continuous for every~$x \in V$. Since~$\Gamma$ is
compact and Hausdorff and $V$ is Hausdorff, $p_x$ is a closed and
proper map: images of closed sets are closed and preimages of compact
sets are compact.

Let~$d_\Gamma$ be a bi-invariant metric that induces the topology
on~$\Gamma$ and for~$\sigma \in \Gamma$ and~$\delta \geq 0$ let
\[
\cB_\Gamma(\sigma, \delta) = \{\, \tau \in \Gamma : d_\Gamma(\sigma, \tau)
\leq \delta\,\}
\]
be the closed ball in~$\Gamma$ with center~$\sigma$ and
radius~$\delta$.  For~$x$, $y \in V$, let
\[
  d_V(x, y) = \inf\{\, \delta : y \in p_x(\cB_\Gamma(1, \delta))\,\}
  = \inf\{\, d_\Gamma(1, \sigma) : \sigma \in \Gamma,\ \sigma x = y\,\}.
\]
It is easy to show that~$d_V$ is a $\Gamma$-invariant metric; we show
now that it induces the topology on~$V$.

To this end, for~$x \in V$ consider the closed ball with center~$x$
and radius~$\delta \geq 0$, namely
\[
\begin{split}
  \cB_V(x, \delta) &= \{\, y \in V : d_V(x, y) \leq \delta\,\}\\
  &= \{\, \sigma x : \text{$\sigma \in \Gamma$ and $d_\Gamma(1,
    \sigma) \leq \delta$}\,\}\\
  &= p_x(\cB_\Gamma(1, \delta)).
\end{split}
\]
Notice that this ball is closed since~$\cB_\Gamma(1, \delta)$ is
closed and~$p_x$ is a closed map. We show now that the collection of
finite unions of such balls is a base of closed sets of the topology
on~$V$, and it will follow that the metric~$d_V$ induces the topology
on~$V$.

Let~$X \subseteq V$ be a closed set and take~$x \notin X$. Note
$p_x^{-1}(X)$ and $p_x^{-1}(\{x\})$ are compact and disjoint, so
\[
\delta = d_\Gamma(p_x^{-1}(X), p_x^{-1}(\{x\})) > 0.
\]
Since~$p_x^{-1}(X)$ is compact, it can be covered by finitely many
closed balls of radius~$\delta / 2$, say
$\cB_\Gamma(\sigma_i, \delta / 2)$ with $\sigma_i \in p_x^{-1}(X)$
for~$i = 1$, \dots,~$N$; moreover, by the definition of~$\delta$, we
have that~$p_x^{-1}(\{x\})$ is disjoint from each such ball. But then
\[
X \subseteq p_x(p_x^{-1}(X)) \subseteq \bigcup_{i=1}^N
p_x(\cB_\Gamma(\sigma_i, \delta/2)) = \bigcup_{i=1}^N p_{\sigma_i
  x}(\cB_\Gamma(1, \delta/2)) = \bigcup_{i=1}^N \cB_V(\sigma_i x,
\delta/2)
\]
and~$x \notin \bigcup_{i=1}^N \cB_V(\sigma_i x, \delta/2)$. We have
shown that, given any closed set~$X \subseteq V$ and any~$x \notin X$,
there is a finite union of $d_V$-balls that contains~$X$ but not~$x$,
that is, finite unions of $d_V$-balls form a base of closed sets of
the topology on~$V$.
\end{proof}

\begin{proof}[Proof of Theorem~\ref{thm:bqc-exactness}]
Since~$\bqc(V) \subseteq \Ccal(V)$, from
Theorem~\ref{thm:cp-exactness} it suffices to show
that~\eqref{eq:bqc-upper} holds. So let~$I \subseteq V$ be a
measurable independent set with~$\omega(I) > 0$ (such a set exists
since~$G$ is locally independent and~$\omega$ is positive on open
sets) and consider the kernel
$A = \omega(I)^{-1} \rey(\chi_I \otimes \chi_I^*)$. Using
Lemma~\ref{lem:invariant-V-metric} we know that~$V$ is metrizable via
a $\Gamma$-invariant metric, and then using 
Lemma~\ref{lem:reynolds-continuous} we see that~$A$ is continuous;
it is also immediate that~$\tr A = 1$ and~$A(x, y) = 0$ if~$x$,
$y \in V$ are adjacent. Let us then show that~$A \in \bqc(V)$.

Indeed, given a finite~$U \subseteq V$, note that for
any~$Z \in \bqc^*(U)$, if~$\mu$ is the Haar measure on~$\Gamma$, then
\[
  \sum_{x,y \in U} Z(x, y) A(x, y) = \omega(I)^{-1} \int_\Gamma
  \sum_{x,y \in U} Z(x, y) \chi_I(\sigma x) \chi_I(\sigma y) \,
  d\mu(\sigma) \geq 0,
\]
whence~$A[U] \in \bqc(U)$. So~$A$ is a
feasible solution of $\vartheta(G, \bqc(V))$
with~$\langle J, A\rangle = \omega(I)$,
establishing~\eqref{eq:bqc-upper}.
\end{proof}

A corresponding result holds for the bound for distance graphs
on~$\R^n$, presented in~\S\ref{sec:euclid}, by considering the cone
\begin{multline*}
  \bqc(\R^n) = \cl\{\, f \in L^\infty(\R^n) : \text{$f$ is real
    valued and continuous}\\ \text{and
    $\bigl(f(x- y)\bigr)_{x,y \in U} \in \bqc(U)$ for all
    finite~$U \subseteq V$}\,\},
\end{multline*}
with the closure taken in the~$L^\infty$ norm. Note that $\bqc(\R^n)
\subseteq \Ccal(\R^n)$.

\begin{theorem}
\label{eq:rn-bqc-exactness}
If~$D \subseteq (0, \infty)$ is closed, then
\[
  \vartheta(G(\R^n, D), \bqc(\R^n)) = \ualpha(G(\R^n, D)).
\]
\end{theorem}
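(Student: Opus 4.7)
The plan is to adapt the proof of Theorem~\ref{thm:rn-exactness} (the CP case), replacing the use of Theorem~\ref{thm:cp-exactness} with Theorem~\ref{thm:bqc-exactness} and producing an \emph{explicit} feasible solution on each torus so as to avoid subtleties about whether a general element of $\bqc(V_L)$ enjoys a finite-subkernel BQC property. The upper bound $\vartheta(G(\R^n, D), \bqc(\R^n)) \leq \ualpha(G(\R^n, D))$ is immediate from the inclusion $\bqc(\R^n) \subseteq \Ccal(\R^n)$ combined with Theorem~\ref{thm:rn-exactness}; only the reverse inequality requires work, and for it no analog of assertion~(A2) is needed.

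For the lower bound we reduce as in~\S\ref{sec:euclid} to the case where $D$ is nonempty and bounded: if $D$ is unbounded, then $\ualpha(G(\R^n, D)) = 0$ by Furstenberg--Katznelson--Weiss, and the BQC bound is nonnegative and dominated by the vanishing PSD bound. Set $r = \sup D$. For each $L > 2r$, Lemma~\ref{lem:compactification-li} ensures that $G_L = G(V_L, D)$ is locally independent on the compact abelian group $V_L = \R^n / L\Z^n$, which is metrized by the bi-invariant density metric~\eqref{eq:torus-metric} and acts on itself by translation, so Theorem~\ref{thm:bqc-exactness} gives $\vartheta(G_L, \bqc(V_L)) = \alpha_{\vol}(G_L)$, and Lemma~\ref{lem:rn-limit-ind} then yields $\limsup_{L\to\infty}\vartheta(G_L, \bqc(V_L))/\vol V_L = \ualpha(G(\R^n, D))$.

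The heart of the proof is an analog of assertion~(A1) in the proof of Theorem~\ref{thm:rn-exactness}, applied to the specific feasible solution from the proof of Theorem~\ref{thm:bqc-exactness}. Fix $\epsilon > 0$, choose a measurable independent set $I \subseteq V_L$ with $\vol(I) \geq \alpha_{\vol}(G_L) - \epsilon$, and set $A = \vol(I)^{-1}\rey(\chi_I \otimes \chi_I^*)$. Because $V_L$ acts on itself by translation, $A$ is translation invariant, so $A(x,y) = g(x-y)$ for a continuous periodic function $g\colon V_L \to \R$ with $g(0) = \vol(V_L)^{-1}$; the periodic lifting $f(w) = \vol(V_L)\,g(w)$ to $\R^n$ satisfies, exactly as in~(A1), that $f$ is continuous, $f(0) = 1$, $f(w) = 0$ whenever $\|w\| \in D$ (the inequality $L > 2r$ makes the torus distance on $V_L$ agree with $\|w\|$, so $0$ and $w$ are adjacent in $G_L$), and $M(f) = \vol(V_L)^{-1}\vol(I)$.

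The one new verification is that $f \in \bqc(\R^n)$. Let $\pi\colon \R^n \to V_L$ be the canonical projection; translation invariance of $A$ gives $f(x-y) = \vol(V_L)\,A(\pi(x), \pi(y))$, whence for any finite $U \subseteq \R^n$,
\[
\bigl(f(x-y)\bigr)_{x,y \in U} = \vol(I)^{-1} \int_{V_L} v_\sigma \otimes v_\sigma^*\, d\sigma,
\]
where $v_\sigma \colon U \to \{0,1\}$ is defined by $v_\sigma(x) = \chi_I(\pi(x) + \sigma)$. Each integrand is a generator of $\bqc(U)$, and since $\bqc(U)$ is a closed convex cone in a finite-dimensional space the integral lies in $\bqc(U)$. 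Thus $f$ is feasible for $\vartheta(G(\R^n, D), \bqc(\R^n))$ with $M(f) \geq (\alpha_{\vol}(G_L) - \epsilon)/\vol V_L$; letting first $\epsilon \to 0$ and then $L \to \infty$ delivers $\vartheta(G(\R^n, D), \bqc(\R^n)) \geq \ualpha(G(\R^n, D))$. The main obstacle is precisely this last verification: lifting a general feasible solution of $\vartheta(G_L, \bqc(V_L))$ would not obviously land in $\bqc(\R^n)$ because $\bqc(V_L)$ is defined as an $L^2$-closure, but working with the explicit Reynolds-averaged characteristic function makes the required BQC property on finite subsets transparent.
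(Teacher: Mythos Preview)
Your proof is correct and follows essentially the same approach as the paper: both establish the lower bound by taking a periodic independent set, forming its normalized autocorrelation, and verifying membership in $\bqc(\R^n)$ by expressing each finite matrix $\bigl(f(x-y)\bigr)_{x,y\in U}$ as an integral of rank-one $\{0,1\}$-generators (the paper phrases this dually, testing against $Z \in \bqc^*(U)$, which is equivalent). The paper works directly on~$\R^n$ with a periodic set and computes $M(f)$ via Fourier coefficients, whereas you route through the torus; but the feasible solution you produce is the same function, and your invocation of Theorem~\ref{thm:bqc-exactness} is in the end only motivational, since you never lift a \emph{general} feasible solution from $\vartheta(G_L,\bqc(V_L))$ but rather construct the explicit Reynolds-averaged one directly.
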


\begin{proof}
Recall from~\S\ref{sec:rn-cp-formulation} that we may assume~$D$ is
bounded. In view of Theorem~\ref{thm:rn-exactness}, it then suffices
to show that $\vartheta(G(\R^n, D), \bqc(\R^n))\geq\ualpha(G(\R^n,
D))$.

Let~$I \subseteq \R^n$ be a measurable and periodic independent set
with~$\ud(I) > 0$ (which exists since~$D$ is bounded) and consider the
function~$f\colon\R^n \to \R$ given by
\[
f(x) = \ud(I)^{-1} \lim_{T\to\infty} \frac{1}{\vol [-T,T]^n}
\int_{[-T,T]^n} \chi_I(z) \chi_I(x+z)\, dz
\]
(notice the limit above exists since~$I$ is periodic). This function
is continuous and satisfies~$f(0) = 1$ and~$f(x) = 0$
if~$\|x\| \in D$, since if~$\|x\| \in D$ then for all~$z$ we cannot
have both~$z$ and~$x+z \in I$. Moreover,~$f \in \bqc(\R^n)$:
if~$U \subseteq \R^n$ is finite and~$Z \in \bqc^*(U)$, then
\[
\begin{split}
&\sum_{x,y \in U} Z(x, y) f(x-y)\\
&\qquad=\ud(I)^{-1} \lim_{T\to\infty} \frac{1}{\vol [-T,T]^n}
\int_{[-T,T]^n} \sum_{x,y \in U} Z(x, y) \chi_I(z) \chi_I(x-y+z)\,
dz\\
&\qquad=\ud(I)^{-1} \lim_{T\to\infty} \frac{1}{\vol [-T,T]^n}
\int_{[-T,T]^n} \sum_{x,y \in U} Z(x, y) \chi_I(x+z) \chi_I(y+z)\,
dz\\
&\qquad\geq 0,
\end{split}
\]
whence~$f$ is a feasible solution of
$\vartheta(G(\R^n, D), \bqc(\R^n))$.  We also have~$M(f) =
\ud(I)$. Indeed, the characteristic function~$\chi_I$ of~$I$ is
periodic, say with periodicity lattice~$\Lambda$. For~$x \in \R^n$,
consider the function~$(\chi_I)_x$ such that
$(\chi_I)_x(z) = \chi_I(x+z)$. Then it is easy to check that the
Fourier coefficient of~$(\chi_I)_x$ at~$u$
equals~$e^{i u \cdot x} \widehat{\chi}_I(u)$, and thus Parseval's
identity gives us
\[
f(x) = \ud(I)^{-1} ((\chi_I)_x, \chi_I) = \ud(I)^{-1} \sum_{u \in
  2\pi\Lambda^*} |\widehat{\chi}_I(u)|^2 e^{i u \cdot x}.
\]
From this it is clear that $M(f) = \widehat{f}(0) = \ud(I)^{-1}
|\widehat{\chi}_I(0)|^2 = \ud(I)$, since~$\widehat{\chi}_I(0) =
\ud(I)$.

To finish, note that~$I$ is any measurable and periodic independent
set, so using Lemma~\ref{lem:rn-limit-ind} the theorem follows.
\end{proof}

Theorem~\ref{thm:bqc-exactness} tells us that any number of
constraints of the form
\[
  \sum_{x, y \in U} Z(x, y) A(x, y) \geq 0,
\]
for finite~$U \subseteq \R^n$ and~$Z \in \bqc^*(U)$, can be added to
$\vartheta(G, \psd(V))$, and that the resulting problem still provides
an upper bound for the independence number. Moreover, if all such
constraints are added, then we obtain the independence
number. Theorem~\ref{eq:rn-bqc-exactness} says the same for the
independence density of~$G(\R^n, D)$.

The main advantage of using $\bqc(U)$ instead of $\Ccal(U)$ is that
the Boolean-quadratic cone in finite dimension is a polyhedral cone, so
for finite~$U$ one is able to compute all (or at least some of) the
facets of $\bqc(U)$, though the amount of work gets prohibitively
large already for~$|U| = 7$~\cite[\S30.6]{DezaL1997}. The better upper
bounds described in~\S\S\ref{sec:sn-bounds} and~\ref{sec:rn-bounds}
were obtained by the use of constraints based on such facets.

%=====================================================================

\subsection{Subgraph constraints}
\label{sec:subgraph-constraints}

Constraints from subgraphs of $G(\R^n, \{1\})$ played a central role
in the computation of the best upper bounds for the independence
density of the unit-distance graph~\cite{BachocPT2015, KeletiMOR2016,
  OliveiraV2010}.

Such \defi{subgraph constraints} are as follows. Let~$G = (V, E)$ be a
locally independent graph and~$\omega$ be a Borel measure on~$V$
and assume~$G$ and~$\omega$ satisfy the hypotheses of
Theorem~\ref{thm:cp-exactness}. Let~$U \subseteq V$ be finite and for
every~$x_0 \in V$ consider the inequality
\begin{equation}
\label{eq:subgraph-constraint}
\sum_{y \in U} A(x_0, y) \leq \alpha(G[U]) A(x_0, x_0),
\end{equation}
where~$A \in L^2(V \times V)$ is continuous and~$G[U]$ is the subgraph
of~$G$ induced by~$U$.

After adding any number of such constraints to $\vartheta(G, \psd(V))$
we still get an upper bound for~$\alpha_\omega(G)$. Indeed,
if~$I \subseteq V$ is a measurable independent set of positive
measure, then $A = \omega(I)^{-1} \rey(\chi_I \otimes \chi_I^*)$ is
continuous, positive, and such that~$\tr A = 1$, $A(x, y) = 0$ if~$x$,
$y \in V$ are adjacent, and~$\langle J, A\rangle = \omega(I)$ (recall
the proof of Theorem~\ref{thm:bqc-exactness}). Moreover, since
$A(x, x) = \omega(V)^{-1}$ for all~$x \in V$, and since for
every~$\sigma \in \Gamma \subseteq \aut(G)$ the set~$\sigma^{-1} I$ is
independent, we get
\[
\begin{split}
\sum_{y \in U} A(x_0, y) &= \sum_{y \in U} \omega(I)^{-1} \int_\Gamma
\chi_I(\sigma x_0) \chi_I(\sigma y)\, d\mu(\sigma)\\
&=\omega(I)^{-1} \int_\Gamma \chi_I(\sigma x_0) \sum_{y \in U}
\chi_I(\sigma y)\, d\mu(\sigma)\\
&=\omega(I)^{-1}\int_\Gamma \chi_I(\sigma x_0) |U \cap \sigma^{-1}I|\,
d\mu(\sigma)\\
&\leq \frac{\alpha(G[U])}{\omega(V)}=\alpha(G[U]) A(x_0, x_0).
\end{split}
\]

Notice these constraints do not come directly from~$\Ccal(V)$
or~$\bqc(V)$, since they rely on the edge set of the
graph. Theorem~\ref{thm:cp-exactness} says that they must be somehow
implied by the constraints coming from~$\Ccal(V)$ together with the
other constraints of problem $\vartheta(G, \Ccal(V))$, but the way in
which this implication is carried out is not necessarily simple: it
could be that only by adding many constraints from the
completely positive cone for sets other than~$U$ one would get the
implication.

The situation is clearer when one considers instead the
Boolean-quadratic cone. In this case, a subgraph constraint for a given
finite~$U \subseteq V$ and a given~$x_0 \in V$ is implied by a single
constraint from~$\bqc(U \cup \{x_0\})$ together with the
constraints~$A(x, y) = 0$ for adjacent~$x$ and~$y$.

To see this, assume for the sake of simplicity that~$x_0 \notin U$ and
write~$U' = U \cup \{x_0\}$ (if~$x_0 \in U$, a simple modification of
the argument below works). Let~$C\colon U' \times U' \to \R$ be the
matrix such that
\[
C(x, y) =
\begin{cases}
  \alpha(G[U])&\text{if $x = y = x_0$};\\
  -1/2&\text{if $x = x_0$ or~$y = x_0$};\\
  0&\text{otherwise.}
\end{cases}
\]
Then the subgraph constraint~\eqref{eq:subgraph-constraint} is
\[
\sum_{x,y \in U'} C(x, y) A(x, y) \geq 0.
\]

We now show that there are matrices~$Z \in \bqc^*(U')$
and~$B\colon U' \times U' \to \R$ such that~$B(x, y) = 0$ if~$x$,
$y \in U$ are not adjacent satisfying~$C = Z + B$, and it will follow
that, if~$A$ is feasible for $\vartheta(G, \psd(V))$ and
$\sum_{x,y \in U'} Z(x,y) A(x,y) \geq 0$, then
\[
\sum_{x,y \in U'} C(x, y) A(x, y)
= \sum_{x,y \in U'} Z(x, y) A(x, y) + \sum_{x,y \in U'} B(x, y) A(x, y)
\geq 0,
\]
whence~$A$ satisfies the subgraph constraint.

For~$Z$, consider the matrix
\begin{equation}
\label{eq:matrix-Z}
Z(x, y) = \begin{cases}
  \alpha(G[U])&\text{if $x = y = x_0$;}\\
  -1/2&\text{if $x = x_0$ or~$y = x_0$;}\\
  1/2&\text{if $(x, y) \in E$;}\\
  0&\text{otherwise},
\end{cases}
\end{equation}
and for~$B$ take the matrix with~$-1/2$ on entries corresponding to
edges of~$G[U]$ and~0 everywhere else. Then~$C = Z + B$, and it
remains to show that~$Z \in \bqc^*(U')$. To this end, take~$f\colon U'
\to \{0,1\}$. If~$f(x_0) = 0$, then clearly $\langle Z, f \otimes
f^*\rangle \geq 0$. So suppose $f(x_0) = 1$ and write~$S = \{\, x \in
U : f(x) = 1\,\}$. Then
\[
\langle Z, f \otimes f^*\rangle = \alpha(G[U]) - |S| + |E(G[S])|.
\]
Now let~$X \subseteq S$ be a maximal independent set
in~$G[S]$. Then~$|X| \leq \alpha(G[U])$. Since~$X$ is maximal,
every~$y \in S \setminus X$ is adjacent to some~$x \in X$,
so~$|S\setminus X| \leq |E(G[S])|$, and
\[
\alpha(G[U]) - |S| + |E(G[S])| = \alpha(G[U]) - |X| - |S\setminus X| +
|E(G[S])| \geq 0,
\]
showing that~$Z \in \bqc^*(U')$.

Finally, subgraph constraints can also be used for distance graphs
on~$\R^n$: given a set~$D \subseteq (0, \infty)$ of forbidden
distances, one can add to $\vartheta(G(\R^n, D), \psd(\R^n))$ any
number of constraints of the form
\[
\sum_{y \in U} f(x_0 - y) \leq \alpha(G(\R^n, D)[U]) f(0),
\]
where~$U \subseteq \R^n$ is finite and~$x_0 \in \R^n$ is fixed.  Such
constraints have been used by Oliveira and
Vallentin~\cite{OliveiraV2010} to get improved upper bounds for the
independence density of the unit-distance graph on~$\R^n$ in several
dimensions; the sets~$U$ used were always vertex sets of regular
simplices in~$\R^n$. Keleti, Matolcsi, Oliveira, and
Ruzsa~\cite{KeletiMOR2016} used the points of the Moser spindle to get
improved bounds for the independence density of~$G(\R^2, \{1\})$;
Bachoc, Passuello, and Thiery~\cite{BachocPT2015} used several
different graphs to get better bounds for the independence density of
$G(\R^n, \{1\})$ for~$n = 4$, \dots,~$24$ and a better asymptotic
bound.

%~~~~~~~~~~~~~~~~~~~~~~~~~~~~~~~~~~~~~~~~~~~~~~~~~~~~~~~~~~~~~~~~~~~~~

\subsubsection{A new class of graphical facets of the Boolean-quadratic
  cone}

The matrix~$Z$ defined in~\eqref{eq:matrix-Z} is sometimes an extreme
ray of $\bqc^*(U')$, that is, $\langle Z, A\rangle \geq 0$ induces a
facet of $\bqc(U')$. In fact, matrices like~$Z$ comprise a whole class
of facets of the Boolean-quadratic cone that generalizes the class of
clique inequalities introduced by Padberg~\cite{Padberg1989}.

Let~$G = (V, E)$ be a finite graph with at least two vertices. We say
that~$G$ is \defi{$\alpha$-critical} if $\alpha(G - e) > \alpha(G)$
for all~$e \in E$; $\alpha$-critical graphs have been extensively
studied in the context of combinatorial
optimization~\cite[\S68.5]{Schrijver2003}.

Assume~$\emptyset \notin V$ and
write~$W = V \cup \{\emptyset\}$. Consider the matrix
$Q_G\colon W \times W \to \R$ defined as
\[
Q_G(x, y) = \begin{cases}
  \alpha(G)&\text{if $x = y = \emptyset$;}\\
  -1/2&\text{if $x = \emptyset$ or~$y = \emptyset$;}\\
  1/2&\text{if $(x, y) \in E$;}\\
  0&\text{otherwise}.
\end{cases}
\]

\begin{theorem}
  Let~$G = (V, E)$ be a finite graph with at least two vertices, and
  assume~$\emptyset \notin V$.  The inequality
  $\langle Q_G, A\rangle \geq 0$ induces a facet of $\bqc(W)$,
  where~$W = V \cup \{\emptyset\}$, if and only if~$G$ is connected
  and $\alpha$-critical.
\end{theorem}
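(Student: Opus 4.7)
The plan is to show that the linear orthogonal complement, in $\Lsym^2(W \times W)$, of the set of tight vectors $\{\,f\otimes f^* : \langle Q_G, f\otimes f^*\rangle = 0\,\}$ equals $\R\cdot Q_G$ precisely when $G$ is connected and $\alpha$-critical. A direct computation shows that $\langle Q_G, f\otimes f^*\rangle = |E(G[S])|$ when $f(\emptyset) = 0$ and $\alpha(G) - |S| + |E(G[S])|$ when $f(\emptyset) = 1$, where $S := \mathrm{supp}(f) \cap V$. Tight $f$'s thus come in two families: (i) $f(\emptyset) = 0$ with $S$ independent in $G$, and (ii) $f(\emptyset) = 1$ with $S = X \cup Y$ where $X$ is a maximum independent set of $G$, $Y := S\setminus X$ is independent, and every $y \in Y$ has exactly one neighbor in $X$ --- this structural description is the equality case of the chain of inequalities already carried out in the paragraph preceding the statement.

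Feeding the (i)-vectors for $|S| = 1$ and $|S| = 2$ into $\langle Z,\cdot\rangle$ forces $Z(x,x) = 0$ for $x \in V$ and $Z(x,y) = 0$ for every non-adjacent pair $\{x,y\} \subseteq V$, so $Z$ is parametrized by $a := Z(\emptyset,\emptyset)$, $b_x := Z(\emptyset, x)$ for $x \in V$, and $c_e := Z(e)$ for $e \in E$. Subtracting the (ii)-condition for $S = X$ from that for a general tight $S = X \cup Y$ collapses the full system into two families, which I label (A) and (B): (A) asserts $a + 2\sum_{x \in X} b_x = 0$ for every maximum independent set $X$; (B) asserts $c_e + b_y = 0$ whenever the edge $e = \{x_0, y\}$ admits a maximum independent set $X \ni x_0$ in which $y$'s only neighbor is $x_0$.

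For sufficiency, assume $G$ is connected and $\alpha$-critical. Fix any edge $e = \{x, y\}$; $\alpha$-criticality yields an independent set $I$ of $G - e$ with $|I| = \alpha(G) + 1$, and any such $I$ must contain both $x$ and $y$ (else $I$ would still be independent in $G$ with too many vertices). Then $X := I \setminus \{y\}$ and $X' := I \setminus \{x\}$ are maximum independent sets of $G$ which, together with (B), give $c_e = -b_y$ and $c_e = -b_x$, so $b_x = b_y$ along every edge. Connectedness propagates this to a single constant $b$, after which (A) forces $a = -2\alpha(G)\,b$ and (B) forces $c_e = -b$ on every edge. Hence $Z = -2b\cdot Q_G$, the orthogonal complement is one-dimensional, and the inequality is facet-defining.

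For necessity I argue contrapositively. If some edge $e = \{x, y\}$ is not $\alpha$-critical, then no independent set of $G - e$ has size exceeding $\alpha(G)$, and the sufficiency argument run in reverse shows that $e$ cannot occur in any (B)-configuration; since (A) is also silent about $c_e$, the coordinate $c_e$ is unconstrained, and adding a nonzero multiple of the symmetric matrix supported on $\{(x,y),(y,x)\}$ to $Q_G$ produces a $Z$ in the orthogonal complement not proportional to $Q_G$. If instead $G$ is disconnected, say $G = G_1 \sqcup G_2$, then every maximum independent set splits as $X = X_1 \cup X_2$ with $X_i$ maximum in $G_i$ and every (B)-configuration lies inside a single component; choosing two distinct scalars $b^{(1)} \ne b^{(2)}$ and setting $b_x = b^{(i)}$ on $V_i$, $c_e = -b^{(i)}$ on $E(G_i)$, and $a = -2\alpha(G_1) b^{(1)} - 2\alpha(G_2) b^{(2)}$ satisfies (A) and (B) but is not a scalar multiple of $Q_G$. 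The main technical obstacle is the combinatorial reduction of the general (ii)-conditions to the compact families (A) and (B); once this reduction is in hand, the hypotheses of $\alpha$-criticality and connectedness enter transparently to propagate equality across the graph.
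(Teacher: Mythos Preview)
Your proof is correct and, for the sufficiency direction, essentially identical to the paper's: both use $\alpha$-criticality on an edge $\{x,y\}$ to produce a maximum independent set of $G-\{x,y\}$ containing both endpoints, extract from it the relations $Z(\emptyset,x)=-Z(x,y)=Z(\emptyset,y)$, and then let connectedness propagate the constant across~$V$.

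The necessity direction is organized differently. The paper argues by decomposing $Q_G$ as a sum of two valid inequalities: if $G$ is disconnected it writes $Q_G = Q_{G_1'} + P$ with $P\geq 0$, and if some edge $e$ is not $\alpha$-critical it writes $Q_G = Q_{G-e} + P$; in either case $Q_G$ is not an extreme ray of $\bqc^*(W)$. You instead stay on the primal side and exhibit a second matrix in the orthogonal complement of the tight vectors. Your argument requires the claim that the full family of (ii)-conditions is \emph{equivalent} to (A) together with the single-vertex (B)-cases --- which is true, since for a tight $S=X\cup Y$ each $y\in Y$ together with $X$ is itself a valid (B)-configuration, so the general (ii)-condition is $(A)+\sum_{y\in Y}(B)_y$ --- but you only sketch this reduction. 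The paper's decomposition sidesteps this bookkeeping entirely, at the cost of being less symmetric with the sufficiency argument.
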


\begin{proof}
The argument given in the previous section shows
that~$\langle Q_G, A\rangle \geq 0$ is valid for~$\bqc(W)$; let us
then establish the necessary and sufficient conditions for it to be
facet defining.

As a subset of the space of symmetric matrices indexed
by~$W \times W$, the cone $\bqc(W)$ is full dimensional. Indeed, it
suffices to notice that the $1+|W|(|W|+1)/2$
matrices~$\chi_U \otimes \chi_U^*$ for~$U \subseteq W$
with~$|U| \leq 2$ are affinely independent.

We first show necessity. If~$G = G_1 + G_2$, where~$G_1$, $G_2$ have
disjoint vertex sets and~$G_1$ is a connected component of~$G$, then
$Q_G = Q_{G'_1} + P$, where~$G'_1 = (V, E(G_1))$ and
$P\colon W \times W \to \R$ is such that
$P(\emptyset, \emptyset) = \alpha(G_2)$ and $P(x, y) = 1/2$
if~$(x, y) \in E(G_2)$. Now $\langle Q_{G'_1}, A\rangle \geq 0$ is
valid for $\bqc(W)$ and, since~$P \geq 0$, so
is~$\langle P, A\rangle \geq 0$. Since
$\alpha(G) = \alpha(G_1) + \alpha(G_2)$ and since~$\bqc(W)$ is full
dimensional, we see that $\langle Q_G, A\rangle \geq 0$ does not
induce a facet.

Similarly, if $\alpha(G - e) = \alpha(G)$ for some~$e = (x, y) \in E$,
then $Q_G = Q_{G - e} + P$, where~$P(x, y) = P(y, x) = 1/2$, and we
see that~$\langle Q_G, A\rangle \geq 0$ does not induce a facet.

To see sufficiency, assume~$G$ is connected and $\alpha$-critical.
Now suppose~$Z\colon W \times W \to \R$ is such
that~$\langle Z, A\rangle \geq 0$ induces a facet of $\bqc(W)$ and
\[
  \{\, A \in \bqc(W) : \langle Q_G, A\rangle = 0\,\}
  \subseteq \{\, A \in \bqc(W) : \langle Z, A\rangle = 0\,\}.
\]
To show that $\langle Q_G, A\rangle \geq 0$ induces a facet it
suffices to show that~$Z$ is a nonnegative multiple of~$Q_G$.

To this end, notice first that if~$x \in V$, then $\langle Q_G,
\chi_{\{x\}} \otimes \chi_{\{x\}}^*\rangle = 0$, so
\[
  Z(x, x) = \langle Z, \chi_{\{x\}} \otimes \chi_{\{x\}}^*\rangle = 0.
\]
Next, let~$x$, $y \in V$ and assume~$(x, y) \notin E$. Then
$\langle Q_G, \chi_{\{x,y\}} \otimes \chi_{\{x,y\}}^* \rangle = 0$,
whence
\[
  Z(x, y) = Z(y, x) = \langle Z, \chi_{\{x,y\}} \otimes
  \chi_{\{x,y\}}^* \rangle = 0.
\]

Note that, for all~$U \subseteq V$, if~$S = U \cup \{\emptyset\}$,
then
\[
  \langle Q_G, \chi_S \otimes \chi_S^*\rangle = \alpha(G) - |U| +
  |E(G[U])|.
\]
Take now~$(x, y) \in E$. Let~$I \subseteq V$ be a maximum independent
set in~$G - (x, y)$; then~$|I| = \alpha(G) + 1$ and hence we must
have~$x$, $y \in I$. Write~$S = I \cup \{\emptyset\}$, so
\[
  \langle Q_G, \chi_S \otimes \chi_S^*\rangle = \alpha(G)
   - (\alpha(G) + 1) + 1 = 0
\]
 and similarly
\[
  \langle Q_G, \chi_{S-x} \otimes \chi_{S-x}^*\rangle = 0,
\]
whence
$\langle Z, \chi_S \otimes \chi_S^*\rangle = \langle Z, \chi_{S-x}
\otimes \chi_{S-x}^*\rangle = 0$. Now, since~$Z(x, y) = 0$ if
$(x, y) \notin E$,
\[
\begin{split}
  0 &= \langle Z, \chi_S \otimes \chi_S^*\rangle\\
    &= \langle Z, \chi_{S-x} \otimes \chi_{S-x}^*\rangle + 2
    Z(\emptyset, x) + 2 Z(x, y)\\
    &= 2 Z(\emptyset, x) + 2 Z(x, y).
\end{split}
\]
Since~$x$ and~$y$ are interchangeable in the above argument, we see
immediately that~$Z(\emptyset, x) = -Z(x, y) = Z(\emptyset,
y)$. Now~$G$ is connected, and so it follows immediately
that there is a number~$a$ such that $Z(\emptyset, x) = -a$ for all~$x
\in V$ and~$Z(x, y) = a$ for all~$(x, y) \in E$.

We are almost done. If~$(x, y) \in E$, then $\langle Z, \chi_{\{x,y\}}
\otimes \chi_{\{x,y\}}^*\rangle \geq 0$, so~$a \geq 0$. If~$I$
is a maximum independent set in~$G$ and~$S = I \cup \{\emptyset\}$,
then $\langle Q_G, \chi_S \otimes \chi_S^*\rangle = 0$ and
\[
  0 = \langle Z, \chi_S \otimes \chi_S^*\rangle = Z(\emptyset,
  \emptyset) - 2a|I|,
\]
whence $Z(\emptyset, \emptyset) = 2a\alpha(G)$ and~$Z = 2aQ_G$, as we
wanted.
\end{proof}

%=====================================================================

\subsection{An alternative normalization and polytope constraints}
\label{sec:bqp-constraints}

The constraint ``$\tr A = 1$'' in~\eqref{eq:theta-problem} is there to
prevent the problem from being unbounded: it is a
\textit{normalization constraint}. There is another kind of
normalization constraint that can be used to replace the trace
constraint; by doing so we obtain an equivalent problem and also gain
the ability to add to our problem constraints from the
\defi{Boolean-quadratic polytope}, which given a nonempty finite
set~$V$ is defined as
\[
\bqp(V) = \conv\{\, f \otimes f^* : f\colon V\to\{0,1\}\,\}.
\]
Such constraints are also implied by constraints from the
Boolean-quadratic cone, but in practice, given our limited
computational power, they are useful. For instance, the
inclusion-exclusion inequalities used by Keleti, Matolcsi, Oliveira,
and Ruzsa~\cite{KeletiMOR2016} to get better upper bounds for
$G(\R^2, \{1\})$ come from facets of~$\bqp(V)$, as we will soon see.

Let~$G = (V, E)$ be a topological graph where~$V$ is a compact
Hausdorff space, $\omega$ be a finite Borel measure on~$V$,
and~$\Kcal(V) \subseteq \psd(V)$ be a convex cone. Since~$\Kcal(V)$ is
a subset of the cone of positive kernels, Mercer's theorem implies
that any continuous kernel in~$\Kcal(V)$ is trace class and that the
trace is the integral over the diagonal. The alternative version
of~\eqref{eq:theta-problem} is:
\begin{equation}
\label{eq:alt-theta}
\optprob{\text{maximize}&\tr A\\
  &A(x, y) = 0\quad\text{if~$(x, y) \in E$},\\
  &\begin{pmatrix}
     1&\ \tr A\\
     \tr A&\ \langle J, A\rangle
   \end{pmatrix}\text{ is positive semidefinite,}\\
  &\text{$A$ is continuous and~$A \in \Kcal(V)$.}
}
\end{equation}

If~$A$ is a feasible solution of the above problem, then $A' = (\tr
A)^{-1} A$ is feasible for $\vartheta(G, \Kcal(V))$. Moreover, the
positive-semidefiniteness of the~$2 \times 2$ matrix
in~\eqref{eq:alt-theta} implies that~$(\tr A)^2 \leq \langle J,
A\rangle$, whence
\[
\langle J, A'\rangle = (\tr A)^{-1} \langle J, A\rangle \geq \tr A,
\]
so $\vartheta(G, \Kcal(V))$ is~$\geq$ the optimal value
of~\eqref{eq:alt-theta}.  The reverse inequality is also true: if~$A$
is a feasible solution of~\eqref{eq:theta-problem}, then one easily
checks that~$A' = \langle J, A\rangle A$ is a feasible solution
of~\eqref{eq:alt-theta} and that~$\tr A' = \langle J, A\rangle$. So
problems~\eqref{eq:theta-problem} and~\eqref{eq:alt-theta} are
actually equivalent.

Fix a finite set~$U \subseteq V$ and let~$Z\colon U \times U \to \R$
be a symmetric matrix and~$\beta$ be a real number such that $\langle
Z, A\rangle \geq \beta$ is a valid inequality for $\bqp(U)$, that is,
$\langle Z, A\rangle \geq \beta$ for all $A \in \bqp(U)$.

If~$G$ and~$\omega$ satisfy the hypotheses of
Theorem~\ref{thm:cp-exactness}, then any number of constraints
\begin{equation}
  \label{eq:bqp-ineq}
  \sum_{x,y \in U} Z(x, y) A(x, y) \geq \beta
\end{equation}
can be added to~\eqref{eq:alt-theta} with~$\Kcal(V) = \psd(V)$ and we
still get an upper bound for~$\alpha_\omega(G)$. Indeed, if~$I$ is a
measurable independent set of positive measure, then
$A = \rey(\chi_I \otimes \chi_I^*)$ is easily checked to be a feasible
solution of~\eqref{eq:alt-theta} with~$\Kcal(V) = \psd(V)$ that
moreover satisfies~\eqref{eq:bqp-ineq}, and~$\tr A = \omega(I)$. The
alternative normalization is essential for this approach to work: if
we try to add constraint~\eqref{eq:bqp-ineq}
to~\eqref{eq:theta-problem}, then if~$\beta \neq 0$ we get a
\textsl{nonlinear} constraint because of the different normalization,
making it more difficult to deal with the resulting problem in
practice.

The same ideas can be applied to
problem~\eqref{eq:rn-cp-problem}. First, given a closed set~$D
\subseteq (0, \infty)$ of forbidden distances, we consider an
alternative normalization that gives rise to an equivalent problem:
\[%begin{equation}
%\abel{eq:alt-rn}
\optprob{
  \text{maximize}&f(0)\\
  &f(x) = 0\quad\text{if~$\|x\| \in D$},\\
  &\begin{pmatrix}
     1&\ f(0)\\
     f(0)&\ M(f)
   \end{pmatrix}\text{ is positive semidefinite,}\\
  &\multicolumn{2}{l}{\text{$f\colon\R^n \to \R$ is continuous and~$f
      \in \Kcal(\R^n)$.}}
}
\]%end{equation}
Then, we observe that we can add to this problem, with~$\Kcal(\R^n) =
\psd(\R^n)$, any number of constraints of the form
\begin{equation}
\label{eq:rn-bqp-constraint}
\sum_{x, y \in U} Z(x, y) f(x - y) \geq \beta
\end{equation}
for finite~$U \subseteq \R^n$ and~$Z$, $\beta$ such that
$\langle Z, A\rangle \geq \beta$ is valid for $\bqp(U)$ and still
prove that the optimal value provides an upper bound for the
independence density of~$G(\R^n, D)$.

Given points~$x_1$, \dots,~$x_N \in \R^n$, the inclusion-exclusion
inequality used by Keleti, Matolcsi, Oliveira, and Ruzsa is
\[
  \sum_{1 \leq i < j \leq N} f(x_i - x_j) - N f(0) \geq -1.
\]
This constraint is just~\eqref{eq:rn-bqp-constraint} with~$Z$ such
that $Z(x_i, x_i) = -1$ for all~$i$ and~$Z(x_i, x_j) = 1/2$ for
all~$i \neq j$. It can be easily checked that
$\langle Z, A\rangle \geq -1$ is a valid inequality for
$\bqp(\{x_1, \ldots, x_N\})$; one can even verify that it gives a
facet of the polytope, simply by finding enough affinely independent
points in the polytope for which the inequality is tight.

Constraints from~$\bqp(U)$ for a finite~$U \subseteq \R^n$ are implied
by constraints from $\bqc(U \cup \{\emptyset\})$ together with the
other constraints from~\eqref{eq:theta-problem}
or~\eqref{eq:rn-cp-problem}. It is still useful to consider
constraints from~$\bqp(U)$ mainly since~$U \cup \{\emptyset\}$ is a
larger set than~$U$, and therefore computing the facets
of~$\bqc(U \cup \{\emptyset\})$ can be much harder than computing the
facets of~$\bqc(U)$, as is the case already when
$|U|\nobreak=\nobreak 6$. For instance, Deza and
Laurent~\cite[\S30.6]{DezaL1997} survey some numbers for the cut
polytope, which is equivalent to the Boolean-quadratic polytope under
a linear transformation. For~$6$ points, the total number of facets
is~$116{,}764$, distributed among~$11$ equivalence classes. The
approach we use to find violated constraints cannot, however, exploit
the full symmetry of the polytope, so we end up using a list of~$428$
facets. For~$7$ points, the total number of facets
is~$217{,}093{,}472$, distributed among~$147$ classes. Taking into
account the smaller symmetry group we use, the total list of facets
needed for our procedure would have more than ten thousand entries.

%%%%%%%%%%%%%%%%%%%%%%%%%%%%%%%%%%%%%%%%%%%%%%%%%%%%%%%%%%%%%%%%%%%%%%

\section{Better upper bounds for the independence number of graphs on
  the sphere}
\label{sec:sn-bounds}

By adding $\bqp(U)$-constraints to
$\vartheta(G(S^{n-1}, \{\pi/2\}), \psd(S^{n-1}))$ using the approach
described in~\S\ref{sec:bqp-constraints}, one is able to improve on
the best upper bounds for
$\alpha_\omega(G(S^{n-1}, \{\pi/2\})) =
m_0(S^{n-1})$. Table~\ref{tab:sn-bounds} shows bounds thus obtained
for the \defi{independence ratio}, namely
\[
  \alpha_\omega(G(S^{n-1}, \{\pi/2\})) / \omega_n,
\]
for~$n = 3$, \dots, 8. The rest of this section is devoted to an
explanation of how these bounds were computed. The bounds have also
been checked to be correct; the verification procedure is explained in
detail in a document available with the arXiv version of this
paper. The programs used for verification can also be found with the
arXiv version.

\newcolumntype{C}{>{$}c<{$}}
\begin{table}
\begin{tabular}{CCCC}
    n & \text{\sl Upper bound} & \text{\sl Lower bound}
    &\text{\sl\# extra constraints}\\[3pt]
    3 & 0.30153 & 0.2929 & 11\\
    4 & 0.21676 & 0.1817 & 2\\
    5 & 0.16765 & 0.1161 & 1\\
    6 & 0.13382 & 0.0756 & 3\\
    7 & 0.11739 & 0.0498 & 2\\
    8 & 0.09981 & 0.0331 & 2\\
\end{tabular}

\bigskip

\caption{New upper bounds for the independence ratio of
  $G(S^{n-1}, \{\pi/2\})$. Next to each bound is the number of
  $\bqp(U)$-constraints used to obtain it. The lower bounds come from
  two opposite spherical caps. The bound for~$n = 3$ improves on a
  previous bound of~$0.308$ by Zhao (personal communication);
  the bounds for~$n \geq 4$ improve on Witsenhausen's
  bound~\cite{Witsenhausen1974} of~$1/n$.}
\label{tab:sn-bounds}
\end{table}

%\begin{table}
%\begin{center}
 % \begin{tabular}{ccc}
  %  $n$&Upper bound&\# extra constraints\\[3pt]
 %   3&0.301526567&11\\
%    4&0.216757599&2\\
%    5&0.167646013&1\\
%    6&0.133817480&3\\
%    7&0.117382470&2\\
%    8&0.099809379&2\\
%  \end{tabular}
%\end{center}
%\bigskip

%\caption{Upper bounds for the independence ratio of $G(S^{n-1},
%  \{\pi/2\})$. Next to each bound is the number of
% $\bqp(U)$-constraints used to obtain it.}
%\label{tab:sn-bounds}
%\end{table}

%=====================================================================

\subsection{Invariant kernels on the sphere}

Let $\orto(n)$ be the orthogonal group on~$\R^n$, that is, the group
of~$n \times n$ orthogonal matrices. The orthogonal group acts on a
kernel $A\colon S^{n-1} \times S^{n-1} \to \R$ by
\[
  (T \cdot A)(x, y) = A(T^{-1}x, T^{-1}y),
\]
where $T \in \orto(n)$; we say that~$A$ is \defi{invariant}
if~$T\cdot A = A$ for all~$T \in \orto(n)$. An invariant kernel is
thus a real-valued function with domain~$[-1,1]$, since
if~$x \cdot y = x' \cdot y'$, then~$A(x', y') = A(x, y)$.

Let~$D \subseteq (0, \pi]$ be a set of forbidden distances. If the
cone $\Kcal(S^{n-1})$ is invariant under the action of the orthogonal
group, then one can add to the problem
$\vartheta(G(S^{n-1}, D), \Kcal(S^{n-1}))$ the restriction that~$A$
has to be invariant without changing the optimal value of the
resulting problem. Indeed, if~$A$ is a feasible solution, then so
is~$T \cdot A$ for all~$T \in \orto(n)$, and hence its
\defi{symmetrization}
\[
  \overline{A}(x, y) = \int_{\orto(n)} A(T^{-1}x, T^{-1}y)\, d\mu(T),
\]
where~$\mu$ is the Haar measure on~$\orto(n)$, is also feasible and
has the same objective value as~$A$.

The advantage of requiring~$A$ to be invariant is that invariant and
positive kernels can be easily parameterized. Indeed, let~$P_k^n$
denote the Jacobi polynomial of degree~$k$ and
parameters~$(\alpha, \alpha)$, where~$\alpha = (n - 3) / 2$,
normalized so $P_k^n(1) = 1$ (for background on Jacobi polynomials,
see the book by Szegö~\cite{Szego1975}). A theorem of
Schoenberg~\cite{Schoenberg1942} says that
$A\colon S^{n-1} \times S^{n-1} \to \R$ is continuous, invariant, and
positive if and only if there are nonnegative numbers~$a(0)$, $a(1)$,
\dots\ such that $\sum_{k=0}^\infty a(k) < \infty$ and
\begin{equation}
\label{eq:inv-kernel}
  A(x, y) = \sum_{k=0}^\infty a(k) P_k^n(x \cdot y)
\end{equation}
for all~$x$, $y \in S^{n-1}$; in particular, the sum above converges
absolutely and uniformly on~$S^{n-1} \times S^{n-1}$.

%=====================================================================

\subsection{Primal and dual formulations}

When a continuous, invariant, and positive kernel~$A$ is represented
as in~\eqref{eq:inv-kernel}, constraint~\eqref{eq:bqp-ineq} becomes
\[
  \beta \leq \sum_{x,y \in U} Z(x, y) A(x, y) = \sum_{k=0}^\infty
  a(k) \sum_{x,y \in U} Z(x, y) P_k^n(x \cdot y) = \sum_{k=0}^\infty
  a(k) r(k),
\]
where $r\colon \N \to \R$ is the function such that
\[
  r(k) = \sum_{x, y \in U} Z(x, y) P_k^n(x \cdot y).
\]
Let~$\Rcal$ be a finite collection of $\bqp(U)$-constraints represented
as pairs~$(r, \beta)$, where~$r$ is given by the above expression for
a valid inequality $\langle Z, A\rangle \geq \beta$ for $\bqp(U)$ for
some finite~$U \subseteq S^{n-1}$.

If a continuous, invariant, and positive kernel~$A$ is given by
expression~\eqref{eq:inv-kernel},
then~$\langle J, A\rangle = \omega_n^2 a(0)$. Moreover, all diagonal
entries of~$A$ are the same, and hence
\[
  \tr A = \omega_n \sum_{k=0}^\infty a(k).
\]
Using the alternative normalization of~\S\ref{sec:bqp-constraints},
problem $\vartheta(G(S^{n-1}, \{\theta\}), \psd(S^{n-1}))$,
strengthened with the $\bqp(U)$-constraints in~$\Rcal$, can be
equivalently written as
\begin{equation}
\label{eq:sphere-primal}
  \optprob{\text{maximize}&\sum_{k=0}^\infty a(k)\\[5pt]
    &\sum_{k=0}^\infty a(k) P_k^n(\cos\theta) = 0,\\[5pt]
    &\sum_{k=0}^\infty a(k) r(k) \geq \beta\quad\text{for $(r, \beta)
      \in \Rcal$},\\[5pt]
    &\begin{pmatrix}
      1&\ \omega_n\sum_{k=0}^\infty a(k)\\
      \omega_n\sum_{k=0}^\infty a(k)&\omega_n^2 a(0)
    \end{pmatrix}\text{ is positive semidefinite,}\\[8pt]
    &\text{$a(k) \geq 0$ for all~$k \geq 0$.}
  }
\end{equation}
Notice that the objective function was scaled so the optimal
value is a bound for the independence ratio
$\alpha_\omega(G(S^{n-1}, \{\theta\})) / \omega_n$.

A dual for this problem is the following optimization problem on
variables~$\lambda$, $y(r, \beta)$ for~$(r, \beta) \in \Rcal$,
and~$z_1$, $z_2$, $z_3$:
\begin{equation}
\label{eq:sphere-dual}
  \optprob{\text{minimize}&z_1 + \sum_{(r, \beta) \in \Rcal} y(r,
    \beta) \beta\\[5pt]
    &\lambda + \sum_{(r, \beta) \in \Rcal} y(r, \beta) r(0) +
    z_2\omega_n + z_3\omega_n^2 \geq 1,\\[5pt]
    &\lambda P_k^n(\cos\theta) + \sum_{(r, \beta) \in \Rcal} y(r,
    \beta) r(k) + z_2\omega_n \geq 1,\quad\text{for $k \geq
      1$},\\[5pt]
    &\begin{pmatrix}
      z_1&-\frac{1}{2} z_2\\
      -\frac{1}{2}z_2&-z_3
    \end{pmatrix}\text{ is positive semidefinite,}\\[8pt]
    &\text{$y \leq 0$.}
  }
\end{equation}
In practice, this is the problem that we solve to obtain an upper
bound; there are two main reasons for this. The first one comes from
weak duality: the objective value of any feasible solution of this
problem is an upper bound for the independence ratio. Indeed,
let~$\lambda$, $y$, $z_1$, $z_2$, $z_3$ be a feasible solution
of~\eqref{eq:sphere-dual} and~$a$ be a feasible solution
of~\eqref{eq:sphere-primal}. Then
\[
\begin{split}
  z_1 + \sum_{(r, \beta) \in \Rcal} y(r, \beta) \beta &\geq z_1 +
  \sum_{(r, \beta) \in \Rcal} y(r, \beta) \sum_{k=0}^\infty a(k) r(k)\\
  &= z_1 + \sum_{k=0}^\infty a(k) \sum_{(r, \beta) \in \Rcal} y(r,
  \beta) r(k)\\
  &\geq z_1 + a(0)(-z_3\omega_n^2) + \sum_{k=0}^\infty a(k) (1 - \lambda
  P_k^n(\cos\theta) - z_2 \omega_n)\\
  &=z_1 - z_3\omega_n^2 a(0) + (1-z_2\omega_n) \sum_{k=0}^\infty a(k)
  - \lambda\sum_{k=0}^\infty a(k) P_k^n(\cos\theta)\\
  &=z_1 - z_3\omega_n^2 a(0) - z_2\omega_n \sum_{k=0}^\infty a(k) +
  \sum_{k=0}^\infty a(k)\\
  &\geq   \sum_{k=0}^\infty a(k),
\end{split}
\]  
as we wanted, where for the last inequality we use the
positive-semidefiniteness of the~$2 \times 2$ matrices
in~\eqref{eq:sphere-primal} and~\eqref{eq:sphere-dual}.

The second reason is that the dual is a semidefinite program with
finitely many variables, though infinitely many constraints, including
one constraint for each~$k \geq 0$. In practice, we choose~$d > 0$ and
disregard all constraints for~$k > d$. Then we solve a finite
semidefinite program, and later on we prove that a suitable
modification of the solution found is indeed feasible for the infinite
problem, as we will see now.

%=====================================================================

\subsection{Finding feasible dual solutions and checking them}
\label{sec:sphere-separation}

To find good feasible solutions of~\eqref{eq:sphere-dual}, we start by
taking~$\Rcal = \emptyset$. Then we turn our problem into a finite
one: we choose~$d > 0$ and disregard all constraints for~$k > d$. We
have then a finite semidefinite program, which we solve
using standard semidefinite programming solvers. The idea is that,
if~$d$ is large enough, then the solution found will be close enough
to being feasible, and so by slightly changing~$z_1$, $z_2$, and~$z_3$
we will be able to find a feasible solution.

By solving the finite problem we obtain at the same time an optimal
solution of the corresponding finite primal problem, in which~$a(k) =
0$ if~$k > d$ (notice this is likely not an optimal solution of the
original primal problem). We use this primal solution to perform a
\textit{separation round}, that is, to look for violated polytope
constraints that we can add to the problem. One way to do this is as
follows.

Say~$a$ is the primal solution and let
\[
  A(x, y) = \sum_{k=0}^\infty a(k) P_k^n(x \cdot y).
\]
Fix an integer~$N \geq 2$, write~$[N] = \{1, \ldots, N\}$, and
let~$Z \in \R^{N \times N}$, $\beta \in \R$ be such that
$\langle Z, X \rangle \geq \beta$ is valid for~$\bqp([N])$.  Then we
try to find points $x_1$,~\dots,~$x_N \in S^{n-1}$ that maximize the
violation
\begin{equation}
\label{eq:sn-violation}
\beta - \sum_{i,j=1}^N Z(i, j) A(x_i, x_j)
\end{equation}
of the polytope inequality. If we find points such that the violation
is positive, then we have a violated constraint which can be added
to~$\Rcal$; the whole procedure can then be repeated: the dual problem
is solved again and a new separation round is performed.

To find violated constraints we need to know valid inequalities, or
better yet facets, of~$\bqp([N])$. Up to~$N = 6$ it is possible to
work with a full list of facets; for~$N = 7$ only with a partial
list. To find points~$x_1$, \dots,~$x_N \in S^{n-1}$
maximizing~\eqref{eq:sn-violation}, we represent the points on the
sphere by stereographic projection on the~$x_n = -1$ plane and use
some method for unconstrained optimization that converges to a local
optimum.

After a few optimization/separation rounds, one starts to notice only
minor improvements to the bound. Then it is time to check how far from
feasible the dual solution is and to fix it in order to get a truly
feasible solution and therefore an upper bound. A detailed description
of the verification procedure, together with a program to check the
dual solutions used for the results in this section, can be found
together with the arXiv version of this paper.

%%%%%%%%%%%%%%%%%%%%%%%%%%%%%%%%%%%%%%%%%%%%%%%%%%%%%%%%%%%%%%%%%%%%%%

\section{Better upper bounds for the independence density of
  unit-distance graphs}
\label{sec:rn-bounds}

Just like in the case of graphs on the sphere, we can add
$\bqp(U)$-constraints to $\vartheta(G(\R^n, \{1\}), \psd(\R^n))$ and
so obtain improved upper bounds for $\ualpha(G(\R^n , \{1\}))$
for~$n = 3$, \dots,~8. These improved upper bounds then provide new
lower bounds for the \defi{measurable chromatic number}
$\chim(G(\R^n, \{1\}))$ of the unit-distance graph, which is the
minimum number of measurable independent sets needed to
partition~$\R^n$, for~$n = 4$, \dots, 8. Indeed, since
\[
  \ualpha(G(\R^n, \{1\})) \chim(G(\R^n, \{1\})) \geq 1,
\]
if $\ualpha(G(\R^n, \{1\})) \leq u$, then
$\chim(G(\R^n, \{1\})) \geq \lceil 1/u \rceil$.

Table~\ref{tab:rn-bounds} shows these new bounds compared to the
previously best ones. To obtain the bounds for~$n = 4$, \dots, 8,
subgraph constraints (see~\S\ref{sec:subgraph-constraints}) have also
been used. In the remainder of this section we will see how these
bounds have been computed; they have also been checked to be correct,
and the verification procedure is explained in detail in a document
available with the arXiv version of this paper. The programs used for
the verification can also be found with the arXiv version.

\begin{table}
  \begin{center}
    \begin{tabular}{cccrrl}
      &\multicolumn{2}{c}{\sl Upper bound for
        $\ualpha$}&\multicolumn{2}{c}{\sl Lower bound for $\chim$}\\
      $n$&\sl Previous&\sl New&\multicolumn{1}{c}{\sl Previous}
                  &\multicolumn{1}{c}{\sl New}&\sl Graphs used\\[3pt]
      3&0.1645090&0.1532996&7&7&none\\
      4&0.1000620&0.0985701&10&11&600-cell\\
      5&0.0677778&0.0624485&15&17&600-cell\\
      6&0.0478444&0.0450325&21&23&600-cell\\
      7&0.0276502&0.0260782&37&39&$E_8$ kissing\\
      8&0.0195941&0.0190945&52&53&$E_8$ and 8-simplex\\
    \end{tabular}
  \end{center}
  \bigskip
  
  \caption{The bounds for~$n = 3$ are due to Oliveira and
    Vallentin~\cite{OliveiraV2010}; all other bounds are due to
    Bachoc, Passuello, and Thiery~\cite{BachocPT2015}. The graphs used
    for the subgraph constraints are indicated in the last column;
    they are the same ones used by Bachoc, Passuello, and Thiery
    (ibid., Table~2), except for the 8-simplex, which is the regular
    simplex of side-length~1 in~$\R^8$.}
  \label{tab:rn-bounds}
\end{table}

%=====================================================================

\subsection{Radial functions}
\label{sec:radial}

The orthogonal group~$\orto(n)$ acts on a
function~$f\colon \R^n \to \C$ by
\[
(T \cdot f)(x) = f(T^{-1}x),
\]
where~$T \in \orto(n)$; we say that~$f$ is \defi{radial} if it is
invariant under this action, that is, if~$T \cdot f = f$ for all~$T
\in \orto(n)$. A radial function~$f$ is thus a function of one real
variable, since if~$\|x\| = \|y\|$, then~$f(x) = f(y)$.

Let~$D \subseteq (0, \infty)$ be a set of forbidden distances.  If the
cone~$\Kcal(\R^n) \subseteq L^\infty(\R^n)$ is invariant under the
action of the orthogonal group, then one can add to the problem
$\vartheta(G(\R^n, D), \Kcal(\R^n))$ the restriction that~$f$ has to
be radial without changing the optimal value of the resulting
problem. Indeed, if~$f$ is a feasible solution, then so is~$T \cdot f$
for all~$T \in \orto(n)$, and hence its \defi{radialization}
\[
\overline{f}(x) = \int_{\orto(n)} f(T^{-1} x)\, d\mu(T) =
\frac{1}{\omega(S^{n-1})} \int_{S^{n-1}} f(\|x\|\xi)\, d\omega(\xi),
\]
where~$\mu$ is the Haar measure on~$\orto(n)$, is also feasible and
has the same objective value as~$f$.

The advantage of requiring~$f$ to be radial is that radial functions
of positive type can be easily parameterized. Indeed, if~$f \in
\psd(\R^n)$ is continuous, then Bochner's theorem says that there is a
finite Borel measure~$\nu$ on~$\R^n$ such that
\[
f(x) = \int_{\R^n} e^{i u \cdot x}\, d\nu(u).
\]
But then we obtain the following expression, due to
Schoenberg~\cite{Schoenberg1938}, for the radialization of~$f$:
\begin{equation}
  \label{eq:f-radial}
  \begin{split}
    \overline{f}(x) &= \frac{1}{\omega(S^{n-1})}\int_{S^{n-1}}
    \int_{\R^n} e^{i u \cdot \|x\| \xi}\, d\nu(u)
    d\omega(\xi)\\
    &=\int_{\R^n} \frac{1}{\omega(S^{n-1})} \int_{S^{n-1}} e^{i u
      \cdot \|x\| \xi}\,
    d\omega(\xi) d\nu(u)\\
    &= \int_0^\infty \Omega_n(t \|x\|)\, d\alpha(t),
  \end{split}
\end{equation}
where
\begin{equation}
\label{eq:omega-n}
\Omega_n(\|u\|) = \frac{1}{\omega(S^{n-1})} \int_{S^{n-1}} e^{i u
  \cdot \xi}\, d\omega(\xi)
\end{equation}
for~$u \in \R^n$ and~$\alpha$ is the Borel measure on~$[0, \infty)$ such
that
\[
  \alpha(X) = \nu(\{\, \lambda \xi : \text{$\lambda \in X$ and
    $\xi \in S^{n-1}$}\,\})
\]
for every measurable set~$X$. The function~$\Omega_n$ has a simple
expression in terms of Bessel functions, namely
\begin{equation}
\label{eq:omega-bessel}
\Omega_n(t) = \Gamma\Bigl(\frac{n}{2}\Bigr)
\Bigl(\frac{2}{t}\Bigr)^{(n-2) / 2} J_{(n-2)/2}(t)
\end{equation}
for~$t > 0$ and~$\Omega_n(0) = 1$, where~$J_\alpha$ denotes the Bessel
function of first kind of order~$\alpha$ (for background, see the book
by Watson~\cite{Watson1922}).

%=====================================================================

\subsection{Primal and dual formulations}
\label{sec:rn-dual}

When a continuous radial function~$f$ of positive type is represented
as in~\eqref{eq:f-radial}, 
constraint~\eqref{eq:rn-bqp-constraint} becomes
\[
  \beta \leq \sum_{x,y \in U} Z(x, y) f(x-y) = \int_0^\infty \sum_{x,y
    \in U} Z(x, y) \Omega_n(t \|x-y\|)\, d\alpha(t) = \int_0^\infty
  r(t)\, d\alpha(t),
\]
where~$r\colon [0, \infty) \to \R$ is the continuous function such
that
\[
r(t) = \sum_{x,y \in U}  Z(x, y) \Omega_n(t\|x-y\|).
\]
As shown in~\S\ref{sec:subgraph-constraints}, a subgraph constraint is
implied by one $\bqp(U)$-constraint together with the other
constraints of $\vartheta(G(\R^n, \{1\}), \psd(\R^n))$, so in the
discussion below we treat them as $\bqp(U)$-constraints.

Let~$\Rcal$ be a finite collection of $\bqp(U)$-constraints
represented as pairs $(r, \beta)$, where~$r$ is given by the above
expression for a valid inequality $\langle Z, A\rangle \geq \beta$
for $\bqp(U)$ for some finite~$U \subseteq \R^n$.  Using the
alternative normalization of~\S\ref{sec:bqp-constraints}, problem
$\vartheta(G(\R^n, \{1\}), \psd(\R^n))$, strengthened with the
$\bqp(U)$-constraints in~$\Rcal$, can be equivalently written as
\begin{equation}
\label{eq:unit-distance-rn}
\optprob{\text{maximize}&\alpha([0, \infty))\\[5pt]
    &\int_0^\infty \Omega_n(t)\, d\alpha(t) = 0,\\[5pt]
    &\int_0^\infty r(t)\, d\alpha(t) \geq \beta\quad\text{for $(r,
      \beta) \in \Rcal$},\\[5pt]
  &\begin{pmatrix}
     1&\ \alpha([0, \infty))\\
     \alpha([0, \infty))&\ \alpha(\{0\})
   \end{pmatrix}\text{ is positive semidefinite,}\\[8pt]
  &\text{$\alpha$ is a finite Borel measure on~$[0, \infty)$.}
}
\end{equation}

A dual for this problem is the following optimization problem on
variables~$\lambda$, $y(r, \beta)$ for~$(r, \beta) \in \Rcal$,
and~$z_1$, $z_2$, $z_3$:
\begin{equation}
\label{eq:unit-distance-rn-dual}
\optprob{\text{minimize}&z_1 + \sum_{(r, \beta) \in \Rcal} y(r, \beta)
  \beta\\[5pt] 
    &\lambda + \sum_{(r, \beta) \in \Rcal} y(r, \beta) r(0) + z_2 +
  z_3 \geq 1,\\[5pt]
    &\lambda \Omega_n(t) + \sum_{(r, \beta) \in \Rcal} y(r, \beta)
  r(t) + z_2 \geq 1\quad\text{for~$t > 0$,}\\[5pt]
  &\begin{pmatrix}
     z_1&-\frac{1}{2} z_2\\
     -\frac{1}{2}z_2&-z_3
   \end{pmatrix}\text{ is positive semidefinite,}\\[8pt]
  &\text{$y \leq 0$.}
}
\end{equation}
Again, this is the problem that we solve to obtain an upper bound, and
the two reasons for this are the same as before. The first one comes
from weak duality: the objective value of any feasible solution of
this problem is an upper bound for the independence density. Indeed,
let~$\lambda$, $y$, $z_1$, $z_2$, $z_3$ be a feasible solution
of~\eqref{eq:unit-distance-rn-dual} and~$\alpha$ be a feasible
solution of~\eqref{eq:unit-distance-rn}. Then
\[
\begin{split}
  z_1 + \sum_{(r, \beta) \in \Rcal} y(r, \beta) \beta &\geq z_1 +
  \sum_{(r, \beta) \in \Rcal} y(r, \beta) \int_0^\infty r(t)\,
  d\alpha(t)\\
  &= z_1 + \int_0^\infty \sum_{(r, \beta) \in \Rcal} y(r, \beta)
  r(t)\,
  d\alpha(t)\\
  &\geq z_1 + \alpha(\{0\})(-z_3) + \int_0^\infty 1 - \lambda
  \Omega_n(t) - z_2\, d\alpha(t)\\
  &=z_1 - z_3 \alpha(\{0\}) + (1-z_2) \alpha([0, \infty))
  - \lambda\int_0^\infty \Omega_n(t)\, d\alpha(t)\\
  &=z_1 - z_3 \alpha(\{0\}) -z_2 \alpha([0, \infty)) +
  \alpha([0,\infty))\\
  &\geq \alpha([0, \infty)),
\end{split}
\]
as we wanted.

The second reason is that the dual is a semidefinite program with
finitely many variables, though infinitely many constraints, including
one constraint for each~$t > 0$. In practice, we discretize the set of
constraints and solve a finite semidefinite program, later on proving
that a suitable modification of the solution found is indeed feasible
for the infinite problem, as we discuss now.

%=====================================================================

\subsection{Finding feasible dual solutions and checking them}
\label{sec:rn-finding-solutions}

To find good feasible solutions of~\eqref{eq:unit-distance-rn-dual},
we start by taking~$\Rcal = \emptyset$. Then we discretize the
constraint set: we choose a finite
sample~$\Scal \subseteq (0, \infty)$ and instead of all constraints
for~$t > 0$ we only consider constraints for~$t \in \Scal$. Then we
have a semidefinite program, which we solve using standard
semidefinite programming solvers. The idea is that, if the
sample~$\Scal$ is fine enough, then the solution found will be close
enough to being feasible, and so by slightly increasing~$z_1$
and~$z_2$ we will be able to find a feasible solution.

By solving the discretized dual problem we obtain at the same time an
optimal solution of the discretized primal problem, in which~$\alpha$
is a sum of Dirac~$\delta$ measures supported on~$\Scal \cup \{0\}$
(notice this is likely not an optimal solution of the original primal
problem, but of the discretized one). We use this primal solution to
perform a \textit{separation round}, that is, to look for violated
$\bqp(U)$-constraints that we can add to the problem. One way to do
this is as follows.

Say that~$\alpha$ is the primal solution and let
\[
f(x) = \int_0^\infty \Omega_n(t\|x\|)\, d\alpha(t).
\]
Fix an integer~$N \geq 2$, write~$[N] = \{1, \ldots, N\}$, and
let~$Z \in \R^{N \times N}$, $\beta \in \R$ be such that
$\langle Z, A \rangle \geq \beta$ is valid for~$\bqp([N])$.  Then we
try to find points~$x_1$, \dots,~$x_N \in \R^n$ that maximize the
violation
\begin{equation}
\label{eq:rn-violation}
\beta - \sum_{i,j=1}^N Z(i, j) f(x_i - x_j) 
\end{equation}
of the $\bqp(U)$-constraint. If we find points such that the violation
is positive, then we have a violated constraint which can be added
to~$\Rcal$; the whole procedure can then be repeated: the dual problem
is solved again and a new separation round is performed. To find
violated constraints we work with a list of facets of~$\bqp([N])$, as
in~\S\ref{sec:sphere-separation}. To find points~$x_1$,
\dots,~$x_N \in \R^n$ maximizing~\eqref{eq:rn-violation} we simply use
some method for unconstrained optimization.

After a few optimization/separation rounds, one starts to notice only
minor improvements to the bound. Then it is time to check how far from
feasible the dual solution is and to fix it in order to get a truly
feasible solution and therefore an upper bound. The verification
procedure for the dual solution has already been outlined by Keleti,
Matolcsi, Oliveira, and Ruzsa~\cite{KeletiMOR2016} and will be omitted
here; the dual solutions that give the bounds in
Table~\ref{tab:rn-bounds} and a program to verify them can be found
together with the arXiv version of this paper.

%%%%%%%%%%%%%%%%%%%%%%%%%%%%%%%%%%%%%%%%%%%%%%%%%%%%%%%%%%%%%%%%%%%%%%

\section{Sets avoiding many distances in~\texorpdfstring{$\R^n$}{Rn} and the
  computability of the independence density}%
\label{sec:multiple-avoid}

Reassuring though Theorem~\ref{thm:cp-exactness} may be, the
computational results of~\S\S\ref{sec:sn-bounds}
and~\ref{sec:rn-bounds} do not use it, or rather use only the easy
direction of the statement. In this section we will see how the full
power of Theorem~\ref{thm:cp-exactness} can be used to recover results
about densities of sets avoiding several distances in Euclidean
space.

Furstenberg, Katznelson, and Weiss~\cite{FurstenbergKW1990} showed
that, if~$n \geq 2$, then any subset of~$\R^n$ with positive upper
density realizes all arbitrarily large distances. More precisely,
if~$I \subseteq \R^n$ has positive upper density, then there
is~$d_0 > 0$ such that for all~$d > d_0$ there are~$x$, $y \in I$
with~$\|x-y\|=d$. This fails for~$n = 1$: the set $\bigcup_{k \in \Z}
(2k, 2k+1)$ has density~$1/2$ but does not realize any odd distance.

Falconer~\cite{Falconer1986} proved the following related theorem:
if~$(d_m)$ is a sequence of positive numbers that converges to~0, then
for all~$n \geq 2$
\[
  \lim_{m\to\infty} \ualpha(G(\R^n, \{d_1, \ldots, d_m\})) = 0.
\]
This theorem also fails when~$n = 1$, as can be seen from an
adaptation of the previous example.

Bukh~\cite{Bukh2008} proved a theorem that implies both theorems
above; namely, he showed that, as the ratios~$d_2 / d_1$,
\dots,~$d_m / d_{m-1}$ between the distances~$d_1$, \dots,~$d_m$ go to
infinity, so does $\ualpha(G(\R^n, \{d_1, \ldots, d_m\}))$ go to
$\ualpha(G(\R^n, \{1\}))^m$, provided~$n \geq 2$. More precisely,
for every~$n \geq 2$ and every~$m \geq 2$,
\begin{equation}
\label{eq:bukh}
  \lim_{q \to \infty} \sup\{\, \ualpha(G(\R^n, \{d_1, \ldots, d_m\}))
  : d_k / d_{k-1} > q\,\} = \ualpha(G(\R^n, \{1\}))^m.
\end{equation}

Oliveira and Vallentin~\cite{OliveiraV2010} showed that the limit
above decreases exponentially fast as~$m$ increases. They showed that
\[
  \lim_{q \to \infty} \sup\{\, \vartheta(G(\R^n, \{d_1, \ldots,
  d_m\}), \psd(\R^n)) : d_k / d_{k-1} > q\,\} \leq 2^{-m},
\]
using in the proof only a few properties of the Bessel function. In
this section, we will see how Bukh's result~\eqref{eq:bukh} can be
obtained in a similar fashion using
Theorem~\ref{thm:cp-exactness}. This illustrates how the
completely positive formulation provides a good enough
characterization of the independence density to allow us to prove such
precise asymptotic results.

Bukh derives his asymptotic result from an algorithm to compute the
independence density to any desired precision. As a by-product of the
approach of this section we also obtain such an algorithm based on
solving a sequence of stronger and stronger convex optimization
problems.

Finally, similar decay results can be proved for distance graphs on
other metric spaces, such as the sphere or the real or complex
projective space~\cite{OliveiraV2013}. The methods of this section can
in principle be applied to any metric space, as long as the harmonic
analysis can be tackled successfully.

%=====================================================================

\subsection{Thick constraints}

The better bounds for the independence density described
in~\S\ref{sec:rn-bounds} were obtained by adding to the initial
problem $\vartheta(G(\R^n,\{1\}), \psd(\R^n))$ a few
$\bqp(U)$-constraints for finite sets~$U$. Our approach in this
section is similar: we wish to add more and more constraints to the
initial problem in a way that is guaranteed to give us closer and
closer approximations of the independence density. The constraints
used in~\S\ref{sec:rn-bounds} are easy to deal with in computations,
but it is not clear (and we do not know) whether by adding a finite
number of them to the initial problem we can get arbitrarily close to
the independence density. A slight modification of these constraints,
however, displays this property, even though such modified constraints
are much harder to deal with in practice.

For a finite set~$U \subseteq \R^n$ write
\[
  m(U) = \min\{\,\|x-y\| : x, y \in U,\ x \neq y\,\}
\]
for the minimum distance between pairs of distinct points in~$U$. The
following lemma provides an alternative characterization of
$\Ccal(\R^n)$.

\begin{lemma}
\label{lem:alternative-Crn-char}
A continuous and real-valued function~$f \in L^\infty(\R^n)$ belongs
to $\Ccal(\R^n)$ if and only if
\begin{equation}
  \label{eq:thick-constraint}
  \sum_{x,y\in U} Z(x, y) \int_{B(x, \delta)} \int_{B(y, \delta)} 
  f(x'-y')\, dy'dx' \geq 0
\end{equation}
for all finite~$U \subseteq \R^n$,~$Z \in \Ccal^*(U)$,
and~$0 < \delta \leq m(U)/2$.
\end{lemma}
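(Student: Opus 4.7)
I would prove the two implications of the lemma separately, with the ``only if'' direction being the one that requires actual work.

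For the ``if'' direction I would use the characterization of $\Ccal(\R^n)$ implicit in its definition: a continuous real-valued $f \in L^\infty(\R^n)$ belongs to $\Ccal(\R^n)$ exactly when $\bigl(f(u-v)\bigr)_{u,v\in U} \in \Ccal(U)$ for every finite $U \subseteq \R^n$ (since the finite cones $\Ccal(U)$ are closed and this condition is preserved under $L^\infty$ limits). Given such a $U$ with $|U|\geq 2$ and $Z \in \Ccal^*(U)$, I apply \eqref{eq:thick-constraint} to the triple $(U, Z, \delta)$ with $0 < \delta \leq m(U)/2$ and divide through by $v(\delta)^2$, where $v(\delta) = \vol B(0,\delta)$. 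By continuity of $f$, each normalized double integral $v(\delta)^{-2}\int_{B(x,\delta)}\int_{B(y,\delta)} f(x'-y')\, dy' dx'$ tends to $f(x-y)$ as $\delta \to 0$ (on the diagonal $x=y$ as well, since $x'-y' \to 0$ uniformly). Passing to the limit in the finite sum yields $\sum_{x,y\in U} Z(x,y) f(x-y) \geq 0$, as required.

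For the ``only if'' direction I would first observe that for each fixed admissible triple $(U, Z, \delta)$ the functional in \eqref{eq:thick-constraint} is continuous in $f$ for the $L^\infty$ norm (the integration domain has finite measure), so the set of $f$ satisfying \eqref{eq:thick-constraint} for all admissible triples is a closed cone. It therefore suffices to prove \eqref{eq:thick-constraint} for $f$ in the generating family of $\Ccal(\R^n)$: continuous real-valued $f \in L^\infty(\R^n)$ with $\bigl(f(u-v)\bigr)_{u,v\in U'} \in \Ccal(U')$ for every finite $U' \subseteq \R^n$. Fix such an $f$ together with $U = \{x_1,\ldots,x_N\}$, $Z \in \Ccal^*(U)$, and $0 < \delta \leq m(U)/2$. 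The key geometric point is that the bound on $\delta$ forces the open balls $B(x_i,\delta)$ to be pairwise disjoint. For each $i$ I would choose a sequence of Riemann partitions of $B(x_i,\delta)$ into cells of diameter vanishing with $k$, with sample points $B_i^{(k)} \subseteq B(x_i,\delta)$ and weights $w_i^{(k)}(u)$ equal to the volume of the cell containing $u$. Set $U_k = \bigcup_i B_i^{(k)}$ (a genuinely disjoint union by disjointness of the balls) and
\[
\tilde Z_k(u,v) = Z(x_i,x_j)\, w_i^{(k)}(u)\, w_j^{(k)}(v) \quad \text{for } u \in B_i^{(k)},\ v \in B_j^{(k)}.
\]
A direct computation, grouping $\sum_{u,v\in U_k} \tilde Z_k(u,v) g(u) g(v)$ as a quadratic form in the nonnegative quantities $G(i) = \sum_{u \in B_i^{(k)}} w_i^{(k)}(u) g(u)$, shows $\tilde Z_k \in \Ccal^*(U_k)$. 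Coupling with the matrix $\bigl(f(u-v)\bigr)_{u,v\in U_k} \in \Ccal(U_k)$ yields
\[
0 \leq \sum_{i,j} Z(x_i,x_j) \sum_{u \in B_i^{(k)},\, v \in B_j^{(k)}} w_i^{(k)}(u)\, w_j^{(k)}(v)\, f(u-v),
\]
and the inner sums are Riemann approximations to $\int_{B(x_i,\delta)} \int_{B(x_j,\delta)} f(u-v)\, dv du$. Uniform continuity of $f$ on the relevant compact products of closed balls yields \eqref{eq:thick-constraint} in the limit $k \to \infty$.

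The only real obstacle is the construction of the copositive extension $\tilde Z_k$ and the clean pullback of $Z$ through the assignment $u \in B_i^{(k)} \mapsto x_i$; this is precisely where ball disjointness, and hence the hypothesis $\delta \leq m(U)/2$, becomes indispensable to make the assignment unambiguous. Everything else reduces to taking Riemann limits of continuous integrands.
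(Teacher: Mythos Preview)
Your proposal is correct and follows essentially the same approach as the paper: both directions hinge on the same two ideas, namely passing from thick to thin constraints via continuity of~$f$ as $\delta \to 0$, and from thin to thick via Riemann-sum discretization of the balls together with a copositive ``blow-up'' $\tilde Z_k$ of~$Z$ (what the paper calls ``duplication of rows and columns''). Your version is slightly more explicit than the paper's---you spell out the copositivity verification for~$\tilde Z_k$ and the reduction to the generating family via $L^\infty$-continuity of the thick functional---but the underlying argument is the same.
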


Compare this lemma to the definition of~$\Ccal(\R^n)$
from~\S\ref{sec:rn-cp-formulation}. A
constraint~\eqref{eq:thick-constraint} is obtained from
\[
  \sum_{x, y \in U} Z(x, y) f(x-y) \geq 0
\]
by considering an open ball of radius~$\delta$ around each point
in~$U$; since~$\delta \leq m(U)/2$, balls around different points do
not intersect. So we are ``thickening'' each point in~$U$.

\begin{proof}
Let~$f \in L^\infty(\R^n)$ be a continuous and real-valued function
and suppose there is a finite~$U \subseteq \R^n$ and~$Z \in
\Ccal^*(U)$ such that
\[
  \sum_{x, y \in U} Z(x, y) f(x - y) < 0.
\]
Since~$f$ is continuous, for every~$\epsilon > 0$ there
is~$\delta > 0$ such that for all~$x$, $y \in U$ we
have~$|f(x - y) - f(x' - y')| < \epsilon$ for
all~$x' \in B(x, \delta)$ and~$y' \in B(y, \delta)$. So for all~$x$,
$y \in U$ one has
\[
\begin{split}
  &\biggl|f(x - y) - (\vol B(0, \delta))^{-2} \int_{B(x, \delta)}
  \int_{B(y, \delta)} f(x' - y')\, dy'dx'\biggr|\\
  &\qquad\leq (\vol B(0, \delta))^{-2} \int_{B(x, \delta)} \int_{B(y,
    \delta)} |f(x - y) - f(x'
  - y')|\, dy' dx'\\
  &\qquad<\epsilon.
\end{split}
\]
It follows that, by taking~$\epsilon$ small enough, the left-hand side
of~\eqref{eq:thick-constraint} for the corresponding~$\delta$ will be
negative.

For the other direction, we approximate integrals of~$f$ by finite
sums. If~$f$ is such that the left-hand side
of~\eqref{eq:thick-constraint} is negative, then take for~$U'$ the set
consisting of a fine sample of points inside each~$B(x, \delta)$
for~$x \in U$. In this way one approximates by summation the double
integrals in~\eqref{eq:thick-constraint}, showing that
\[
  \sum_{x, y \in U'} Z'(x, y) f(x - y) < 0,
\]
where~$Z'\colon U' \times U' \to \R$ is the copositive matrix derived
from~$Z$ by duplication of rows and columns.
\end{proof}

Recall from~\S\ref{sec:radial} that a continuous radial
function~$f \in L^\infty(\R^n)$ of positive type can be represented by
a finite Borel measure~$\alpha$ on~$[0, \infty)$ via
\[
  f(x) = \int_0^\infty \Omega_n(t \|x\|)\, d\alpha(t).
\]
Using this expression, a constraint like~\eqref{eq:thick-constraint}
becomes
\[
  \int_0^\infty r(t)\, d\alpha(t),
\]
where~$r\colon [0, \infty) \to \R$ is the function such that
\begin{equation}
  \label{eq:r-def}
  r(t) = \sum_{x,y \in U} Z(x, y) \int_{B(x, \delta)} \int_{B(y,
    \delta)} \Omega_n(t \|x'-y'\|)\, dy' dx';
\end{equation}
note~$r$ is continuous. The following lemma establishes two key
properties of such a function~$r$.

\begin{lemma}
\label{lem:r-properties}
If~$r$ is given as in~\eqref{eq:r-def}, then~$r$ vanishes at
infinity. If moreover~$n \geq 2$ and~$\tr Z \neq 0$,
then~$r(t) \geq 0$ for all large enough~$t$.
\end{lemma}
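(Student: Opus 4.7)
The plan is to exploit the integral representation~\eqref{eq:omega-n} of $\Omega_n$ to factor the double integral in~\eqref{eq:r-def}. Writing $\phi = \chi_{B(0,\delta)}$ and using
\[
  \Omega_n(t\|x'-y'\|) = \omega(S^{n-1})^{-1} \int_{S^{n-1}} e^{it(x'-y')\cdot \xi}\, d\omega(\xi),
\]
Fubini and the change of variables $u = x' - x$, $v = y' - y$ yield
\[
  \int_{B(x,\delta)} \int_{B(y,\delta)} \Omega_n(t\|x'-y'\|)\, dy' dx' = g(t)^2\, \Omega_n(t\|x-y\|),
\]
where $g(t) = \int_{B(0,\delta)} e^{itu\cdot \xi_0}\, du$ for any fixed unit vector $\xi_0$; since $\phi$ is radial and symmetric, $g(t)$ is independent of the choice of $\xi_0$ and is real valued. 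Plugging this into~\eqref{eq:r-def} gives the key factorization
\[
  r(t) = g(t)^2\, \widetilde{r}(t), \qquad \widetilde{r}(t) := \sum_{x,y \in U} Z(x,y)\, \Omega_n(t\|x-y\|).
\]

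Both claims now reduce to standard asymptotics. For the first claim, $g$ is the Fourier transform of an $L^1$ function evaluated on a ray, so Riemann--Lebesgue (or the explicit formula $g(t) = C t^{-n/2} J_{n/2}(t\delta)$) gives $g(t) \to 0$ as $t \to \infty$. Since $|\Omega_n| \leq 1$ everywhere, $\widetilde{r}$ is bounded, hence $r(t) = g(t)^2\,\widetilde{r}(t) \to 0$. For the second claim, split the diagonal and off-diagonal parts:
\[
  \widetilde{r}(t) = \tr Z + \sum_{\substack{x,y \in U \\ x \neq y}} Z(x,y)\, \Omega_n(t\|x-y\|).
\]
Testing copositivity of $Z$ against $\chi_{\{x\}} \otimes \chi_{\{x\}}^*$ gives $Z(x,x) \geq 0$, so $\tr Z \geq 0$, and the hypothesis $\tr Z \neq 0$ forces $\tr Z > 0$. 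For $n \geq 2$, formula~\eqref{eq:omega-bessel} combined with the asymptotic $J_\nu(s) = O(s^{-1/2})$ yields $\Omega_n(s) \to 0$ as $s \to \infty$, so each off-diagonal term vanishes in the limit and $\widetilde{r}(t) \to \tr Z > 0$. Since $g(t)^2 \geq 0$, we conclude $r(t) \geq 0$ for all sufficiently large $t$.

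The only genuinely nontrivial step is the factorization of the double integral as $g(t)^2\,\Omega_n(t\|x-y\|)$; everything afterwards is Bessel asymptotics. The hypothesis $n \geq 2$ is indispensable for the second claim: in dimension one, $\Omega_1(s) = \cos s$ does not decay, so the off-diagonal sum can oscillate and prevent $\widetilde{r}$ from stabilizing at $\tr Z$.
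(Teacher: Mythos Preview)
Your proof is correct and follows essentially the same strategy as the paper. The key identity in both is the factorization
\[
  \int_{B(x,\delta)} \int_{B(y,\delta)} \Omega_n(t\|x'-y'\|)\, dy'\, dx' \;=\; g(t)^2\, \Omega_n(t\|x-y\|),
\]
which the paper writes as $\Omega_n(\|u\|\|z\|)\,\widehat{\chi}_{B\times B}(u,-u)$ (their equation for the off-diagonal decay); your $g(t)^2$ is exactly $\widehat{\chi}_{B\times B}(u,-u)$ for $\|u\|=t$. You derive this factorization more directly, by expanding $\Omega_n$ via its spherical integral and pulling the radial factor $g(t)$ out of the $S^{n-1}$ average, whereas the paper passes through the Fourier transform on $\R^{2n}$ and averages over $\orto(n)$; the two computations are equivalent. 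One minor difference in organization: the paper proves the vanishing of $r$ from Riemann--Lebesgue applied to $\widehat{\chi}_{B\times(z+B)}$ \emph{before} isolating the factorization, whereas you use the factorization for both claims. Your route is slightly cleaner and makes the structure $r = g^2\widetilde{r}$ visible from the start.
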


\begin{proof}
Let~$B$ be an open ball centered at the origin and fix~$z \in
\R^n$.
Let~$\mu$ be the Haar measure on the orthogonal group
$\orto(n) \subseteq \R^{n \times n}$, normalized so the total
measure is~1. Averaging over $\orto(n)$ the Fourier transform (on the
space~$\R^{2n}$) of the characteristic function
$\chi_{B \times (z+B)}$ of $B \times (z + B)$ we get
\[
\begin{split}
  &\int_{\orto(n)} \widehat{\chi}_{B \times (z+B)}(Tu, -Tu)\,
  d\mu(T)\\
  &\qquad=\int_{\orto(n)} \int_{\R^n} \int_{\R^n} \chi_B(x)
  \chi_{z+B}(y) e^{-i (Tu \cdot x - Tu \cdot y)}\, dy dx d\mu(T)\\
  &\qquad=
  \int_{\R^n} \int_{\R^n} \chi_B(x) \chi_{z+B}(y) \int_{\orto(n)} e^{-i
    Tu \cdot (x - y)}\, d\mu(T) dy dx\\
  &\qquad= \int_B \int_{z+B} \Omega_n(\|u\|\|x-y\|)\, dy dx,
\end{split}
\]
which provides us with an expression for the double integrals
appearing in~\eqref{eq:r-def} in terms of the Fourier transform of
$\chi_{B \times (z + B)}$; the lemma will follow from this relation.

First, it is immediate from this relation that~$r$ vanishes at
infinity. Indeed, the Riemann-Lebesgue
lemma~\cite[Theorem~IX.7]{ReedS1975} says that the Fourier transform
of the characteristic function vanishes at infinity (that is,
as~$\|u\| \to \infty$) and so, since~$Z$ is a fixed matrix, we must
have that~$r$ vanishes at infinity.

To see that~$r$ is nonnegative at infinity is only slightly more
complicated. Note
\[
\widehat{\chi}_{B\times(z+B)}(u, -u) = e^{iu\cdot z}
\widehat{\chi}_{B \times B}(u, -u).
\]
Since~$B$ is centered at the origin,
$\widehat{\chi}_{B \times B}(Tu, -Tu) = \widehat{\chi}_{B \times B}(u,
-u)$ for all $T \in \orto(n)$, so averaging gives us
\begin{equation}
\label{eq:off-diagonal-decay}
\begin{split}
  \int_B \int_{z+B} \Omega_n(\|u\|\|x-y\|)\, dy dx&= \int_{\orto(n)}
  e^{i Tu\cdot z} \widehat{\chi}_{B \times B}(Tu, -Tu)\, d\mu(T)\\
  &= \int_{\orto(n)}
  e^{i Tu\cdot z} \widehat{\chi}_{B \times B}(u, -u)\, d\mu(T)\\
  &=\Omega_n(\|u\|\|z\|) \widehat{\chi}_{B \times B}(u, -u).
\end{split}
\end{equation}
Recall that~$\Omega_n(0) = 1$. Since~$n \geq 2$, the
function~$\Omega_n$ vanishes at infinity\footnote{This follows e.g.\
  from the asymptotic formula for the Bessel
  function~\cite[equation~(1), \S7.21]{Watson1922} and is false
  for~$n = 1$.}. Then, since~$\tr Z \neq 0$, and hence~$\tr Z > 0$
as~$Z$ is copositive, using~\eqref{eq:off-diagonal-decay} it follows
that for all large~$t$ the diagonal summands in~\eqref{eq:r-def}
together dominate the off-diagonal ones.

Now $\widehat{\chi}_{B \times B}(u, -u) \geq 0$ as follows
from the definition of the Fourier transform. So since~$\tr Z > 0$, it
follows that for all large enough~$t$ we have~$r(t) \geq 0$.
\end{proof}

Say now~$\Rcal$ is any finite collection of functions~$r$ each one
defined in terms of a thick constraint as in~\eqref{eq:r-def}, and
let~$d_1$, \dots,~$d_m$ be~$m$ distinct positive numbers. Consider the
optimization problem
\begin{equation}
  \label{eq:rn-m-dist-primal}
  \optprob{
    \text{maximize}&\alpha(\{0\})\\[5pt]
    &\alpha([0, \infty)) = 1,\\[5pt]
    &\int_0^\infty \Omega_n(d_i t)\, d\alpha(t) = 0&\text{for~$i =
      1$, \dots,~$m$},\\[5pt]
    &\int_0^\infty r(t)\, d\alpha(t) \geq 0&\text{for~$r \in \Rcal$},\\[5pt]
    &\multicolumn{2}{l}{\text{$\alpha$ is a Borel measure on~$[0,
        \infty)$.}}
  }
\end{equation}

This problem is comparable to~\eqref{eq:unit-distance-rn}, but instead
of using the alternative normalization of~\S\ref{sec:bqp-constraints},
the standard normalization is used, and instead of considering only
distance~$1$ as a forbidden distance, distances~$d_1$, \dots,~$d_m$
are forbidden; this way we get an infinite-dimensional linear program
instead of a semidefinite program. By construction, the optimal value
of~\eqref{eq:rn-m-dist-primal} is an upper bound for
$\ualpha(G(\R^n, \{d_1, \ldots, d_m\}))$.

A dual problem for~\eqref{eq:rn-m-dist-primal} is the following
(cf.~problem~\eqref{eq:unit-distance-rn-dual}):
\begin{equation}
\label{eq:rn-m-dist-dual}
\optprob{\text{minimize}&\lambda\\
  &\lambda + \sum_{i=1}^m z_i + \sum_{r \in \Rcal} y(r) r(0) \geq 1,\\[5pt]
  &\lambda + \sum_{i=1}^m z_i\Omega_n(d_i t) + \sum_{r \in \Rcal} y(r)
  r(t) \geq 0&\text{for all~$t > 0$,}\\[5pt]
  &y \leq 0.
}
\end{equation}
(Recall~$\Omega_n(0) = 1$, hence the coefficient of~$z_i$ in the first
constraint is~1.) Weak duality holds
between~\eqref{eq:rn-m-dist-primal} and~\eqref{eq:rn-m-dist-dual}:
if~$\lambda$, $z$, and~$y$ is any feasible solution of the dual
problem and~$\alpha$ is any feasible solution of the primal problem,
then $\alpha(\{0\}) \leq \lambda$; the proof of this fact is analogous
to the proof of the weak duality relation between
problems~\eqref{eq:unit-distance-rn}
and~\eqref{eq:unit-distance-rn-dual}, given in~\S\ref{sec:rn-dual}. So
any feasible solution~$\lambda$, $z$, and~$y$ of the dual provides an
upper bound for the independence density, namely
\[
  \ualpha(G(\R^n, \{d_1, \ldots, d_m\})) \leq \lambda.
\]

%=====================================================================

\subsection{A sequence of primal problems}
\label{sec:primal-sequence}

For each finite nonempty set~$U$, the set
\[
  \Tcal^*(U) = \{\, Z \in \Ccal^*(U) : \|Z\|_1 \leq 1\,\},
\]
the \textit{tip} of~$\Ccal^*(U)$, is a compact convex set, and every
copositive matrix is a multiple of a matrix in the tip.\footnote{Here
  we take the $L^1$ norm for the matrix~$Z$ simply for convenience;
  except for the developments of~\S\ref{sec:computability}, any norm
  will do.} There is then a countable dense
subset~$\Tcal^*_{\aleph_0}(U)$ of~$\Tcal^*(U)$, and we may assume that
all~$Z \in \Tcal^*_{\aleph_0}(U)$ are such that~$\tr Z > 0$
and~$\langle J, Z\rangle > 0$.

If~$U \subseteq \R^n$ is finite, then the set of constraints of the
form~\eqref{eq:thick-constraint} with~$Z \in \Tcal^*_{\aleph_0}(U)$
and~$\delta = m(U)/(2k)$ for integer~$k \geq 1$ is countable. If we
consider all finite subsets~$U$ of~$\Q^n$ and all corresponding
constraints, then the set of all constraints thus obtained is also
countable. The corresponding functions~\eqref{eq:r-def} can be
enumerated as~$r_1$, $r_2$,~\dots. We use this enumeration to define a
sequence of optimization problems, the $N$th one being
\begin{equation}
  \label{eq:rn-Nth-primal}
  \optprob{
    \text{maximize}&\alpha(\{0\})\\[5pt]
    &\alpha([0, \infty)) = 1,\\[5pt]
    &\int_0^\infty \Omega_n(t)\, d\alpha(t) = 0,\\[5pt]
    &\int_0^\infty r_k(t)\, d\alpha(t) \geq 0&\text{for~$1 \leq k \leq
      N$},\\[5pt]
    &\multicolumn{2}{l}{\text{$\alpha$ is a Borel measure on~$[0,
        \infty)$.}}
  }
\end{equation}
Note this is just problem~\eqref{eq:rn-m-dist-primal} with
$\Rcal = \{ r_1, \ldots, r_N\}$, $m = 1$, and~$d_1 = 1$.
Let~$\vartheta_N$ denote both the $N$th optimization problem above and
its optimal value, and denote by~$\vartheta_\infty$ the optimization
problem in which constraints for all~$k \geq 1$ are added, as well as
the optimal value of this problem. We know
that~$\vartheta_N \geq \ualpha(G(\R^n, \{1\}))$ for all~$N \geq 1$.
By the construction of the~$r_k$ functions, using
Lemma~\ref{lem:alternative-Crn-char} and
Theorem~\ref{thm:rn-exactness}, we also know
that $\vartheta_\infty = \ualpha(G(\R^n, \{1\}))$.

\begin{theorem}
\label{thm:rn-theta-limit}
If~$n \geq 2$, then~$\lim_{N \to \infty} \vartheta_N =
\vartheta_\infty$.
\end{theorem}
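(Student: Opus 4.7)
Monotonicity of the constraints gives $\vartheta_\infty \leq \vartheta_{N+1} \leq \vartheta_N$, so $L := \lim_{N\to\infty} \vartheta_N$ exists and $L \geq \vartheta_\infty$. The task is to show $L \leq \vartheta_\infty$, and I would do so by producing a feasible solution of $\vartheta_\infty$ with objective value at least $L$ via a weak-$*$ compactness argument on the one-point compactification $[0,\infty]$ of $[0,\infty)$.

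For each $N$, pick a feasible measure $\alpha_N$ for $\vartheta_N$ with $\alpha_N(\{0\}) \geq \vartheta_N - 1/N$; each $\alpha_N$ is a Borel probability measure on $[0,\infty)$. Extend each $\alpha_N$ to a probability measure on $[0,\infty]$ and apply Banach--Alaoglu (equivalently Prokhorov, since $[0,\infty]$ is a compact metric space) to extract a subsequence, still denoted $(\alpha_N)$, converging weakly-$*$ to a probability measure $\mu$ on $[0,\infty]$. Set $c = \mu(\{\infty\})$. Since $\{0\}$ is closed in $[0,\infty]$, the Portmanteau theorem yields
\[
\mu(\{0\}) \geq \limsup_{N\to\infty} \alpha_N(\{0\}) \geq L.
\]

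The crucial use of the hypothesis $n \geq 2$ is Lemma~\ref{lem:r-properties}, which guarantees that both $\Omega_n$ and every $r_k$ vanish at infinity. Consequently they extend continuously to $[0,\infty]$ (with value $0$ at $\infty$), so the linear functionals $\alpha \mapsto \int \Omega_n\, d\alpha$ and $\alpha \mapsto \int r_k\, d\alpha$ are weak-$*$ continuous. Passing to the limit in the constraints of $\vartheta_N$ (for each fixed $k$, once $N \geq k$) yields $\int_{[0,\infty]} \Omega_n\, d\mu = 0$ and $\int_{[0,\infty]} r_k\, d\mu \geq 0$, and since the integrands vanish at $\infty$, the same integrals hold over $[0,\infty)$ against $\mu|_{[0,\infty)}$. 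If $c<1$, define $\beta = \mu|_{[0,\infty)}/(1-c)$; this is a probability measure on $[0,\infty)$ satisfying every constraint of $\vartheta_\infty$, and
\[
\beta(\{0\}) = \frac{\mu(\{0\})}{1-c} \geq \mu(\{0\}) \geq L,
\]
whence $\vartheta_\infty \geq L$. The edge case $c=1$ forces $\mu(\{0\})=0$, so $L \leq 0$; since $\vartheta_\infty = \ualpha(G(\R^n,\{1\})) \geq 0$, the inequality $\vartheta_\infty \geq L$ still holds.

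The main technical concern is the possible escape of mass to infinity in the weak-$*$ limit. Lemma~\ref{lem:r-properties} is precisely what makes this escape harmless: it ensures the constraints survive the compactification, and the rescaling by $1/(1-c) \geq 1$ can only increase the objective. The need for $n \geq 2$ (to force $\Omega_n$ to decay) is consistent with the failure of the analogous statement in one dimension, so this is the unavoidable spot where the hypothesis enters.
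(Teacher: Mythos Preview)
Your proof is correct and follows essentially the same weak-$*$ compactness argument as the paper. The only difference is cosmetic: you compactify $[0,\infty)$ to $[0,\infty]$ and track the mass $c=\mu(\{\infty\})$, whereas the paper works directly in $C_0([0,\infty))^* = M([0,\infty))$ and shows the limit measure has total mass at most~$1$; both versions then renormalize and rely on Lemma~\ref{lem:r-properties} (and the decay of $\Omega_n$ for $n\geq 2$) in exactly the same way.
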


\begin{proof}
Since~$\vartheta_N \geq \vartheta_{N+1}$ and~$\vartheta_N \geq
\vartheta_\infty$ for all~$N \geq 1$, the limit exists and is at
least~$\vartheta_\infty$; we show now the reverse inequality.

So let~$(\alpha_N)$ be a sequence of measures such that~$\alpha_N$ is
a feasible solution of~$\vartheta_N$ and~$\alpha_N(\{0\}) \geq L$ for
all~$N \geq 1$ and some~$L > 0$. Each~$\alpha_N$ is a finite Radon
measure (since~$[0, \infty)$ is a complete separable metric space),
being therefore an element of the space $M([0, \infty))$ of signed
Radon measures of bounded total variation. By the Riesz Representation
Theorem~\cite[Theorem~7.17]{Folland1999}, the space $M([0, \infty))$
is the dual space of~$C_0([0, \infty))$, which is the space of
continuous functions vanishing at infinity equipped with the supremum
norm.

For~$f \in C_0([0, \infty))$ and~$\mu \in M([0, \infty))$, write
\[
  [f, \mu] = \int_0^\infty f(t)\, d\mu(t).
\]
If~$\|f\|_\infty \leq 1$, then~$|[f, \alpha_N]| \leq 1$ since
$\alpha_N([0, \infty)) = 1$. So all~$\alpha_N$ belong to the closed
unit ball
\[
  \{\, \mu \in M([0, \infty)) : \text{$|[f, \mu]| \leq 1$ for all~$f
    \in C_0([0, \infty))$ with~$\|f\|_\infty \leq 1$}\,\},
\]
which by Alaoglu's theorem~\cite[Theorem~5.18]{Folland1999} is compact
in the weak-$*$ topology on~$M([0, \infty))$.

So~$(\alpha_N)$ has a weak-$*$-convergent subsequence\footnote{In
  principle, we know that~$(\alpha_N)$ has a weak-$*$-convergent
  \textsl{subnet}, which is not necessarily a sequence. However,
  since~$C_0([0, \infty))$ with the supremum norm is separable,
  the closed unit ball in~$M([0, \infty))$ is second
  countable~\cite[p.~171, Exercise~50]{Folland1999}, and hence the
  sequence~$(\alpha_N)$ has a weak-$*$-convergent subsequence.}; let
us assume that the sequence itself converges to a
measure~$\alpha \in M([0, \infty))$. Here is what we want to prove:
\begin{enumerate}
\item[(i)] $\alpha(\{0\}) \geq \lim_{N \to \infty} \alpha_N(\{0\})$;
  
\item[(ii)] $\alpha([0, \infty)) \leq 1$;

\item[(iii)] $\alpha([0, \infty))^{-1} \alpha$ is a feasible solution
  of~$\vartheta_\infty$.
\end{enumerate}
From these three claims the reverse inequality, and hence the theorem,
follows.

% argument below: to see that lim alpha(U_k) = alpha(C), use
% e.g. Theorem 1.8 from Folland (continuity from above). Note that
% alpha has bounded total variation, so the theorem can be applied to
% the negative and positive parts independently.

To see~(i), note first that~$\alpha$ must be nonnegative. For
suppose~$\alpha(X) < 0$ for some set~$X$. Since~$\alpha$ is Radon, it
is inner regular on $\sigma$-finite
sets~\cite[Proposition~7.5]{Folland1999}, so there is a compact set~$C
\subseteq X$ such that~$\alpha(C) < 0$. For~$k \geq 1$, let~$U_k$ be
the set of all points at distance less than~$1/k$ from~$C$; note that~$U_k$
is open and that~$C$ is the intersection of~$U_k$ for~$k \geq 1$.

For every~$k \geq 1$, Urysohn's lemma says that there is a continuous
function $f_k\colon [0, \infty) \to [0,1]$ that is~1 on~$C$ and~0
outside of~$U_k$, and since~$U_k$ is bounded this function vanishes at
infinity. Now~$\alpha(C) = \lim_{k\to\infty} \alpha(U_k)$, so if~$k$
is large enough we have
\[
  0 > [f_k, \alpha] = \lim_{N \to \infty} [f_k, \alpha_N],
\]
and for some~$N$ we must have~$[f_k, \alpha_N] < 0$, a contradiction
since~$f \geq 0$ and~$\alpha_N$ is nonnegative.

Next, for every~$\epsilon > 0$
let~$f_\epsilon\colon [0, \infty) \to [0, 1]$ be a continuous function
such that~$f_\epsilon(0) = 1$ and~$f_\epsilon(t) = 0$
for~$t \geq \epsilon$. Note that
\[
  \alpha(\{0\}) = \lim_{\epsilon \downarrow 0} \alpha([0, \epsilon)).
\]
Now
\[
  \alpha([0, \epsilon)) \geq [f_\epsilon, \alpha] = \lim_{N \to
    \infty} [f_\epsilon, \alpha_N] \geq \lim_{N \to \infty}
  \alpha_N(\{0\}),
\]
proving~(i).

For~(ii), if~$\alpha([0, \infty)) > 1$, then there is~$U$ such
that~$\alpha([0, U)) > 1$. Let~$f\colon [0, \infty) \to [0, 1]$ be a
continuous function such that~$f(t) = 1$ for~$t \in [0, U)$
and~$f(t) = 0$ for~$t \geq U + 1$. Then
\[
  1 < \alpha([0, U)) \leq [f, \alpha] = \lim_{N \to \infty} [f,
  \alpha_N],
\]
and for some~$N$ we have $\alpha_N([0, U+1)) \geq [f, \alpha_N] > 1$,
a contradiction since~$\alpha_N$ is feasible for~$\vartheta_N$.

Finally, for~(iii), recall that~$\Omega_n$ vanishes at infinity
for~$n \geq 2$. Then
\[
  \int_0^\infty \Omega_n(t)\, d\alpha(t) = [\Omega_n, \alpha] =
  \lim_{N \to \infty} [\Omega_n, \alpha_N] = 0.
\]
From Lemma~\ref{lem:r-properties} we know that~$r_k$ vanishes at
infinity for all~$k$, so similarly we have~$[r_k, \alpha] \geq 0$
for all~$k \geq 1$, finishing the proof of~(iii) and that of the
theorem.
\end{proof}

%=====================================================================

\subsection{A sequence of dual problems}
\label{sec:dual-sequence}

Following~\eqref{eq:rn-m-dist-dual}, here is a dual problem
for~$\vartheta_N$:
\begin{equation}
\label{eq:thetaN-dual}
\optprob{\text{minimize}&\lambda\\
  &\lambda + z + \sum_{k=1}^N y_k r_k(0) \geq 1,\\
  &\lambda + z \Omega_n(t) + \sum_{k=1}^N y_k r_k(t) \geq 0&\text{for
    all~$t > 0$,}\\
  &y \leq 0.
}
\end{equation}

Weak duality holds between this problem and~$\vartheta_N$, but in this
case we know even more, namely that there is no duality gap between
primal and dual problems:

\begin{theorem}
\label{thm:rn-no-gap}
If~$n \geq 2$, then the optimal value of~\eqref{eq:thetaN-dual}
is~$\vartheta_N$.
\end{theorem}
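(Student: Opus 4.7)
The plan is to prove strong duality by recasting $\vartheta_N$ as a linear program over measures on a compact Hausdorff space and then invoking a separating-hyperplane argument. The hypothesis $n \geq 2$ enters through Lemma~\ref{lem:r-properties} and the companion fact (used in its proof) that $\Omega_n$ itself vanishes at infinity when $n \geq 2$.

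First I would compactify. Replace $[0,\infty)$ by its one-point compactification $X = [0,\infty]$, and extend both $\Omega_n$ and each $r_k$ continuously to~$X$ by assigning them the value~$0$ at~$\infty$. Decomposing $\alpha = c\delta_0 + \beta$ with $c = \alpha(\{0\})$, and letting $\beta$ range over all nonnegative Radon measures on~$X$ (not just those supported on $(0,\infty)$), produces the relaxed LP
\begin{equation*}
  \optprob{
    \text{maximize}&c\\[3pt]
    &c + \beta(X) = 1,\\[3pt]
    &c + \int_X \Omega_n\, d\beta = 0,\\[3pt]
    &c\, r_k(0) + \int_X r_k\, d\beta \geq 0&\text{for $k = 1,\ldots,N$,}\\[3pt]
    &\text{$c \geq 0$ and $\beta$ is a nonnegative Radon measure on $X$.}
  }
\end{equation*}
This relaxation has the same optimum as $\vartheta_N$: given a feasible $(c,\beta)$, one first absorbs $\beta(\{0\})$ into the coefficient of $\delta_0$ (which only improves the objective $c$) and then shifts the mass $\beta(\{\infty\})$ to a finite $t_0 \in (0,\infty)$ with $\Omega_n(t_0) = 0$ and $r_k(t_0) \geq 0$ for all $k$; such $t_0$ exists because, for $n \geq 2$, $\Omega_n$ oscillates with arbitrarily large zeros while $r_k(t) \geq 0$ for all sufficiently large~$t$ by Lemma~\ref{lem:r-properties}.

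Second, I would establish strong duality for the compactified LP via a closed-cone/separating-hyperplane argument. The convex cone
\[
  K = \{ (c,\, c + \beta(X),\, c + \int_X \Omega_n\, d\beta,\, c\, r_1(0) + \int_X r_1\, d\beta,\, \ldots,\, c\, r_N(0) + \int_X r_N\, d\beta) : c \geq 0,\, \beta \geq 0 \} \subseteq \R^{N+3}
\]
is closed: since $X$ is compact, $M(X) = C(X)^*$, so bounded subsets of $M(X)$ are weak-$*$ compact by Alaoglu, and the continuity of $1, \Omega_n, r_1, \ldots, r_N \in C(X)$ lets one extract a weak-$*$ limit along any convergent sequence in $K$. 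Adjoining the polyhedral cone $\{0\}^3 \times (-\infty, 0]^N$ (to encode the inequality constraints) yields another closed convex cone $\tilde K$, since the sum of a closed convex cone with a polyhedral cone is closed in finite dimensions, and $V := \vartheta_N = \sup\{c : (c, 1, 0, 0, \ldots, 0) \in \tilde K\}$. For each $\epsilon > 0$ the point $(V+\epsilon, 1, 0, \ldots, 0)$ lies outside $\tilde K$, so by the separating-hyperplane theorem there is a nonzero linear functional $\ell(c, m, e, d_1, \ldots, d_N) = \gamma c + \lambda m + z e + \sum_k y_k d_k$ that is nonpositive on $\tilde K$ and positive at the separated point. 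Nonpositivity on the generators translates to $\gamma + \lambda + z + \sum_k y_k r_k(0) \leq 0$, $\lambda + z \Omega_n(t) + \sum_k y_k r_k(t) \leq 0$ for all $t \in X$, and $y_k \geq 0$ for all $k$. The case $\gamma = 0$ is excluded: then the separation condition is $\lambda > 0$, contradicting the second inequality at $t = \infty$. Normalizing $\gamma = 1$ and setting $\lambda' = -\lambda$, $z' = -z$, $y'_k = -y_k \leq 0$ then yields a feasible solution of~\eqref{eq:thetaN-dual} with objective $\lambda' < V + \epsilon$; letting $\epsilon \downarrow 0$ shows the dual infimum is at most $V$, which combined with weak duality gives equality.

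The main obstacle is the compactification step. Standard strong-duality results for LPs over measures require the underlying space to be compact, but $\vartheta_N$ is naturally posed on the noncompact $[0,\infty)$, where escape of mass could break closedness of the image cone. The crucial input that unblocks this is precisely the hypothesis $n \geq 2$, via Lemma~\ref{lem:r-properties}: both $\Omega_n$ and the $r_k$ vanish at infinity (so they become continuous on $X$), while $\Omega_n$'s oscillation and the eventual nonnegativity of the $r_k$ let one relocate any mass escaping to infinity back to a finite point. For $n = 1$ neither property holds.
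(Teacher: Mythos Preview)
Your approach---compactify to $X=[0,\infty]$, verify that the compactified primal has the same value, then run a closed-image-cone/separating-hyperplane argument---is sound and genuinely different from the paper's. The paper instead discretizes: it restricts to a finite sample $\Scal \subset [0,L]$, uses Lemma~\ref{lem:rn-bounded-problem} to bound the feasible region and Lemma~\ref{lem:Omega-r-deriv} to control the discretization error, invokes finite LP duality, and then repairs the solution to feasibility for the infinite problem. Your argument is cleaner and bypasses both auxiliary lemmas; the paper's more constructive route, on the other hand, is reused later to establish Turing-machine computability in~\S\ref{sec:computability}.

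One step needs correction. The assertion ``the sum of a closed convex cone with a polyhedral cone is closed in finite dimensions'' is false in general: with $K_1 = \{(x,y,z) : x,y \geq 0,\ xy \geq z^2\}$ and the ray $K_2 = \{(0,-t,0) : t \geq 0\}$ one has $(1/n,0,1) = (1/n,n,1) + (0,-n,0) \in K_1 + K_2$ for every $n$, yet the limit $(0,0,1)$ is not in the sum. What does hold in your setting is the recession-cone condition $K \cap \bigl(\{0\}^3 \times [0,\infty)^N\bigr) = \{0\}$, since any point of $K$ with vanishing first two coordinates forces $c = 0$ and $\beta(X) = 0$; this condition is what guarantees closedness of $\tilde K$. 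Equivalently, argue directly: along a convergent sequence in $\tilde K$ the first two coordinates pin down $c_n$ and $\beta_n(X)$, so the measures stay bounded, a weak-$*$ limit exists, and the polyhedral parts then converge automatically. With this fix the proof goes through. (You should also remark that $\gamma < 0$ is ruled out by the same $t=\infty$ constraint: it gives $\lambda \leq 0$, whence $\gamma(V+\epsilon) > -\lambda \geq 0$.)
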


In~\S\ref{sec:rn-finding-solutions} we saw how
problem~\eqref{eq:unit-distance-rn-dual}, which is similar
to~\eqref{eq:thetaN-dual}, is solved: we disregard all constraints
for~$t > L$ for some~$L > 0$, take a finite sample~$\Scal$ of points
in~$[0, L]$, and consider only constraints for~$t \in \Scal$. We then
have a finite linear program, which can be solved by
computer. Most likely, an optimal solution of this problem will be
(slightly) infeasible for the original, infinite problem. However, the
hope is that, if~$L$ is large enough and the sample~$\Scal$ is fine
enough, then the solution obtained from the discretized problem can be
fixed to become a feasible solution of the original problem.

The proof of the above theorem follows the same strategy, but while
in~\S\ref{sec:rn-finding-solutions} we did not have to argue that this
solution strategy \textsl{always} works (since we were only interested
in having it work for the cases considered), here we have to. For that
we need two lemmas, the first one to help us find the number~$L$.

\begin{lemma}
\label{lem:rn-bounded-problem}
If~$n \geq 2$ and if~$t_0 > 0$ is such that~$\Omega_n(t_0) < 0$
and~$r_k(t_0) \geq 0$ for~$k = 1$, \dots,~$N$, then the polyhedron
in~$\R^{N+2}$ consisting of vectors~$(\lambda, z, y_1, \ldots, y_N)$
satisfying
\begin{equation}
  \label{eq:bounded-rn-region}
  \begin{array}{l}
    -1 \leq \lambda \leq 2,\\
    y_k \leq 0\quad\text{for~$k = 1$, \dots,~$N$},\\
    \lambda + z + \sum_{k=1}^N y_k r_k(0) \geq 1,\\
    \lambda + z\Omega_n(t_0) + \sum_{k=1}^N y_k r_k(t_0) \geq 0
  \end{array}
\end{equation}
is bounded.
\end{lemma}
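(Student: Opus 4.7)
The proof is a direct chain of inequality estimates, one coordinate at a time, made possible by the sign structure built into the four constraints together with the choices that ensure $\tr Z>0$ and $\langle J,Z\rangle>0$ in the enumeration of~\S\ref{sec:primal-sequence}. The plan is the following.

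First observe that $r_k(0)=(\vol B(0,\delta))^2\langle J,Z_k\rangle>0$ for every $k$, since $\Omega_n(0)=1$ and the connection matrix $Z_k$ associated to $r_k$ was chosen with $\langle J,Z_k\rangle>0$. In particular $\rho:=\min_{1\le k\le N} r_k(0)>0$, and also each $r_k(t_0)\ge 0$ is at our disposal by the hypothesis.

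Next I would bound $z$. The fourth inequality in~\eqref{eq:bounded-rn-region} reads $\lambda+z\Omega_n(t_0)+\sum_{k=1}^N y_k r_k(t_0)\ge 0$; since $y_k\le 0$ and $r_k(t_0)\ge 0$, the sum is nonpositive, so $\lambda+z\Omega_n(t_0)\ge 0$. Using $\lambda\le 2$ and $\Omega_n(t_0)<0$ this yields $z\le 2/|\Omega_n(t_0)|$. For a lower bound on $z$, the third inequality gives $z\ge 1-\lambda-\sum_{k=1}^N y_k r_k(0)$, and since $y_kr_k(0)\le 0$ we get $z\ge 1-\lambda\ge -1$. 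Thus $z$ lies in a bounded interval determined solely by $\Omega_n(t_0)$.

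Finally, to bound the $y_k$, I would rewrite the third inequality as $\sum_{k=1}^N(-y_k)\,r_k(0)\le \lambda+z-1$. Each summand on the left is nonnegative (since $-y_k\ge 0$ and $r_k(0)>0$), so
\[
0\le -y_k\le \frac{\lambda+z-1}{r_k(0)}\le \frac{2+2/|\Omega_n(t_0)|-1}{\rho}
\]
for every $k$, which gives a uniform bound on all the $y_k$'s. Together with the explicit interval for $\lambda$ and the two-sided bound on $z$, this shows that the polyhedron sits inside an explicitly described box, hence is bounded.

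The only mild subtlety, and the sole place where the hypotheses $n\ge 2$ and the particular enumeration are used, is the positivity $r_k(0)>0$ needed in the last step; this is why the paper insisted on choosing representatives with $\langle J,Z\rangle>0$ in~\S\ref{sec:primal-sequence}. No analytic estimate on $\Omega_n$ or on the Bessel-like decay of $r_k$ is needed beyond the two inequalities $\Omega_n(t_0)<0$ and $r_k(t_0)\ge 0$ that are assumed; the existence of such a $t_0$ (which will be invoked elsewhere) is itself a consequence of Lemma~\ref{lem:r-properties} and the fact that $\Omega_n$ takes arbitrarily large negative values for $n\ge 2$.
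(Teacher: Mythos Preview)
Your proof is correct and is in fact more elementary than the paper's own argument. The paper proves boundedness by showing that the cone in~$\R^{N+2}$ generated by the row vectors of the inequality system~\eqref{eq:bounded-rn-region} is all of~$\R^{N+2}$, invoking Farkas's Lemma; you instead bound the coordinates directly, one at a time, which has the pleasant side effect of producing an explicit box containing the polyhedron (useful in spirit for the computability discussion in~\S\ref{sec:computability}). Both arguments rest on the same two facts, namely $r_k(0)>0$ and the sign pattern $\Omega_n(t_0)<0$, $r_k(t_0)\ge 0$.

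Two small corrections to your closing remarks. First, the positivity $r_k(0)>0$ comes entirely from the choice $\langle J,Z_k\rangle>0$ made in the enumeration of~\S\ref{sec:primal-sequence} and has nothing to do with~$n\ge 2$; the hypothesis $n\ge 2$ enters only in guaranteeing the \emph{existence} of a suitable~$t_0$, which the lemma itself takes as given. Second, it is not true that~$\Omega_n$ ``takes arbitrarily large negative values'': one has $|\Omega_n(t)|\le 1$ for all~$t$. What is true, and what suffices, is that~$\Omega_n$ is negative at arbitrarily large arguments (it has infinitely many sign changes), so that a~$t_0$ beyond the threshold of Lemma~\ref{lem:r-properties} with $\Omega_n(t_0)<0$ can always be found.
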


Note that such a~$t_0$ as in the statement above exists, as follows
from Lemma~\ref{lem:r-properties} since~$\Omega_n$ has zeros of
arbitrarily large magnitude\footnote{This is true for the Bessel
  function~\cite[Chapter~XV]{Watson1922}.}.

\begin{proof}
Let~$\Kcal \subseteq \R^{N+2}$ be the cone generated by the~$N+4$
vectors
\[
  \begin{array}{l}
    l_1 = (1, 0, \ldots, 0),\\
    l_2 = (-1, 0, \ldots, 0),\\
    e_1 = (0, 0, -1, \ldots, 0), e_2 = (0, 0, 0, -1, \ldots, 0),
    \ldots, e_N = (0, 0, 0, \ldots, -1),\\
    s_1 = (1, 1, r_1(0), \ldots, r_N(0)),\\
    s_2 = (1, \Omega_n(t_0), r_1(t_0), \ldots, r_N(t_0)).
  \end{array}
\]
The polyhedron given by the inequalities~\eqref{eq:bounded-rn-region}
is bounded if and only if~$\Kcal = \R^{N+2}$; let us show that this is
the case.\footnote{This follows from Farkas's Lemma. The vectors above
  form the rows of the constraint matrix of the finite
  linear-inequality system~\eqref{eq:bounded-rn-region}.}

By construction we have~$r_k(0) > 0$ (recall that the copositive
matrix~$Z$ used in the definition of~$r_k$ is such
that~$\langle J, Z\rangle > 0$; see~\S\ref{sec:primal-sequence}); add
nonnegative multiples of~$l_2$, $e_1$, \dots,~$e_N$ to~$s_1$ to
get~$w_1 = (0, 1, 0, \ldots, 0) \in \Kcal$. Since~$r_k(t_0) \geq 0$,
add nonnegative multiples of~$l_2$, $e_1$, \dots,~$e_N$ to~$s_2$ and
rescale the result to see that~$-w_1 \in \Kcal$.

Finally, for each~$k = 1$, \dots,~$N$, add to~$s_1$ nonnegative
multiples of~$l_2$, $-w_1$, and~$e_i$ for~$i \neq k$ and rescale the
result to see that~$-e_k \in \Kcal$, finishing the proof
that~$\Kcal = \R^{N+2}$.  
\end{proof}

The second lemma provides some crude bounds on the derivative of the
functions~$\Omega_n$ and~$r_k$, and will be used to help us decide how
fine the sample~$\Scal$ has to be.

\begin{lemma}
\label{lem:Omega-r-deriv}
If~$n \geq 2$, then for all~$t \geq 0$ we have
$|\Omega'_n(t)| \leq \Gamma(n / 2)$. If~$r$ is given as
in~\eqref{eq:r-def}, then
\[
  |r'(t)| \leq \sum_{x,y \in U} |Z(x, y)| (\|x-y\|+2\delta)(\vol
  B(0,\delta))^2 \Gamma(n / 2).
\]
\end{lemma}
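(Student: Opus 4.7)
The plan is to differentiate both $\Omega_n$ and $r$ directly under the integral signs that define them. For $\Omega_n$, by rotational invariance I can evaluate \eqref{eq:omega-n} at $u = te_1$, writing $\Omega_n(t) = \omega(S^{n-1})^{-1}\int_{S^{n-1}} \cos(t\xi_1)\, d\omega(\xi)$ (the exponential collapses to its real part by the antipodal symmetry of the sphere). Differentiating under the integral sign (legitimate since the $t$-derivative of the integrand is dominated by $|\xi_1|\leq 1$) gives
\[
\Omega'_n(t) = -\frac{1}{\omega(S^{n-1})}\int_{S^{n-1}} \xi_1 \sin(t\xi_1)\, d\omega(\xi),
\]
whence $|\Omega'_n(t)| \leq \omega(S^{n-1})^{-1}\int_{S^{n-1}} |\xi_1|\, d\omega(\xi)$.

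The right-hand side is a standard beta integral. Parameterizing by $\xi_1 = s$, whose push-forward has density proportional to $(1-s^2)^{(n-3)/2}$ on $[-1,1]$, and substituting $u = 1-s^2$ reduces the computation to a single beta function; using $\omega(S^{n-1}) = 2\pi^{n/2}/\Gamma(n/2)$ and $\Gamma((n+1)/2) = ((n-1)/2)\Gamma((n-1)/2)$, the integral evaluates to $\Gamma(n/2)/(\sqrt{\pi}\,\Gamma((n+1)/2))$. Since $\sqrt{\pi}\,\Gamma((n+1)/2) \geq 1$ for every $n \geq 2$ (for $n = 2$ this is $\pi/2 > 1$; for $n \geq 3$ we have $\Gamma((n+1)/2) \geq \Gamma(2) = 1$), the first bound $|\Omega'_n(t)| \leq \Gamma(n/2)$ follows.

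For the bound on $|r'(t)|$, differentiating \eqref{eq:r-def} under the double integral sign (the integrand is jointly continuous and its $t$-derivative is uniformly bounded on the compact product of balls) yields
\[
r'(t) = \sum_{x,y\in U} Z(x, y) \int_{B(x,\delta)}\int_{B(y,\delta)} \|x'-y'\|\, \Omega'_n(t\|x'-y'\|)\, dy'\,dx'.
\]
Applying the triangle inequality, the bound from the first part, and the elementary estimate $\|x'-y'\| \leq \|x-y\| + 2\delta$ valid for $x' \in B(x,\delta)$ and $y' \in B(y,\delta)$ immediately yields the stated inequality, since the product of the two balls has volume $(\vol B(0,\delta))^2$.

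The only real obstacle is the verification of the gamma/beta identity underlying the bound $\omega(S^{n-1})^{-1}\int_{S^{n-1}}|\xi_1|\,d\omega \leq \Gamma(n/2)$; once that is in hand, the remainder of the argument is routine calculus and the triangle inequality.
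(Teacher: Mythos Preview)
Your proof is correct. The treatment of $r'(t)$ is essentially identical to the paper's. For the bound on $|\Omega_n'(t)|$, however, your argument differs from the paper's and is in fact more elementary: the paper uses the Bessel-function representation~\eqref{eq:omega-bessel} and the differentiation identity $(t^{-\alpha}J_\alpha)' = -t^{-\alpha}J_{\alpha+1}$ to get $\Omega_n'(t) = -\Gamma(n/2)(2/t)^{(n-2)/2}J_{n/2}(t)$, then splits into the ranges $t\leq 2$ and $t\geq 2$, invoking the classical bound $|J_\alpha(t)|\leq 1$ for the latter and the alternative identity $\Omega_n'(t) = -(t/n)\Omega_{n+2}(t)$ together with $|\Omega_{n+2}|\leq 1$ for the former. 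Your route via direct differentiation of the spherical integral avoids Bessel identities altogether and even yields the sharper uniform bound $|\Omega_n'(t)|\leq \Gamma(n/2)/\bigl(\sqrt{\pi}\,\Gamma((n+1)/2)\bigr)$, at the cost of a beta-integral computation. Both reach the stated inequality; yours is self-contained, while the paper's makes the connection to $\Omega_{n+2}$ explicit.
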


\begin{proof}
It follows directly from the series expansion of the Bessel function
of order~$\alpha$ that
\[
  \frac{d t^{-\alpha} J_\alpha(t)}{dt} = -t^{-\alpha} J_{\alpha+1}(t),
\]
and so from~\eqref{eq:omega-bessel} we get
\[
  \Omega'_n(t) = -\Gamma\Bigl(\frac{n}{2}\Bigr)
  \Bigl(\frac{2}{t}\Bigr)^{(n-2)/2} J_{n/2}(t).
\]
Compare this with the expression for~$\Omega_{n+2}$ to get
\[
  \Omega'_n(t) = -(t/n) \Omega_{n+2}(t).
\]

Now~$|J_\alpha(t)| \leq 1$ for all~$\alpha \geq 0$
and~$t \geq 0$~\cite[equation~(10), \S13.42]{Watson1922}. Combine this
with the first expression for~$\Omega'_n$ to see that for~$t \geq 2$
we have $|\Omega'_n(t)| \leq \Gamma(n/2)$. From the
definition~\eqref{eq:omega-n} of~$\Omega_n$, it follows
that~$|\Omega_n(t)| \leq 1$ for all~$t$, hence from the second
expression for~$\Omega'_n$ it is clear
that~$|\Omega'_n(t)| \leq 2 / n$ for~$t \leq 2$. For~$n \geq 2$ we
have $\Gamma(n/2) \geq 2 / n$, and
so~$|\Omega'_n(t)| \leq \Gamma(n/2)$.
  
For the estimate on~$r'$, take~$x$, $y \in U$. Then
\[
\begin{split}
  &\biggl|\frac{d}{dt} \int_{B(x, \delta)} \int_{B(y, \delta)}
  \Omega_n(t\|x'-y'\|)\, dy' dx'\biggr|\\
  &\qquad=\biggl|\int_{B(x, \delta)} \int_{B(y, \delta)}
  \frac{d\Omega_n(t\|x'-y'\|)}{dt}\, dy' dx'\biggr|\\
  &\qquad\leq \int_{B(x, \delta)} \int_{B(y, \delta)} \|x'-y'\|
  |\Omega_n'(t\|x'-y'\|)|\, dy' dx'\\
  &\qquad\leq (\|x-y\|+2\delta)(\vol B(0, \delta))^2 \Gamma(n/2),
\end{split}
\]
and the estimate for~$r'$ follows.
\end{proof}

We now have everything needed to prove that there is no duality gap.

\begin{proof}[Proof of Theorem~\ref{thm:rn-no-gap}]
Fix~$\epsilon > 0$ and let~$t_0$ be such that~$\Omega_n(t_0) < 0$ and
$r_k(t_0) \geq 0$ for all~$k = 1$,
\dots,~$N$. Lemma~\ref{lem:rn-bounded-problem} says that the
polyhedron described by the inequalities~\eqref{eq:bounded-rn-region}
is bounded; let~$M$ be an upper bound on the Euclidean norm of any
vector in this polyhedron. Since~$\Omega_n$ vanishes at infinity and
so does~$r_k$ for all~$k$ (cf.~Lemma~\ref{lem:r-properties}), there
is~$L \geq t_0$ such that
\begin{equation}
\label{eq:constraint-norm}
\|(\Omega_n(t), r_1(t), \ldots, r_N(t))\| \leq \epsilon / M\qquad\text{for
  all~$t \geq L$}.
\end{equation}

Lemma~\ref{lem:Omega-r-deriv} implies that there is a constant~$D$
such that
\begin{equation}
\label{eq:deriv-norm}
  \|(\Omega'_n(t), r'_1(t), \ldots, r'_k(t))\| \leq D\qquad\text{for
    all~$t \geq 0$.}
\end{equation}
Let~$\Scal \subseteq [0, L]$ be a finite set of points
with the property that given~$t \in [0, L]$ there is~$s \in \Scal$
with~$|t-s| \leq \epsilon / (M D)$ and make sure that both~$t_0$
and~$L$ are in~$\Scal$.

Now consider the optimization problem
\begin{equation}
\label{eq:thetaN-truncated-dual}
\optprob{\text{minimize}&\lambda\\
  &\lambda + z + \sum_{k=1}^N y_k r_k(0) \geq 1,\\
  &\lambda + z \Omega_n(t) + \sum_{k=1}^N y_k r_k(t) \geq 0&\text{for
    all~$t \in \Scal$,}\\
  &-1 \leq \lambda \leq 2,\\
  &y \leq 0,
}
\end{equation}
which is a finite linear program. Let~$\lambda$, $z$,
and~$y$ be an optimal solution of this problem and write
\[
  g(t) = z\Omega_n(t) + \sum_{k=1}^N y_k r_k(t).
\]

Since $t_0 \in \Scal$, we know from Lemma~\ref{lem:rn-bounded-problem}
that $\|(z, y_1, \ldots, y_N)\| \leq M$. Using the Cauchy-Schwarz
inequality together with~\eqref{eq:constraint-norm} we see that, for
all~$t \geq L$,
\begin{equation}
\label{eq:g-big-estimate}
  |g(t)| \leq M (\epsilon / M) = \epsilon.
\end{equation}
Given~$t \in [0, L]$, there is~$s \in \Scal$ such that~$|t-s| \leq
\epsilon / (M D)$. Then using the mean-value theorem, the
Cauchy-Schwarz inequality, and~\eqref{eq:deriv-norm} we get
\begin{equation}
\label{eq:g-small-estimate}
  |g(t) - g(s)| \leq |t-s| M D \leq \epsilon.
\end{equation}
Since~$\lambda + g(s) \geq 0$, we then have that~$\lambda + g(t) \geq
-\epsilon$.

The estimates~\eqref{eq:g-big-estimate}
and~\eqref{eq:g-small-estimate} together show
that~$\lambda + \epsilon$, $z$, and~$y$ is a feasible solution
of~\eqref{eq:thetaN-dual}. We now find a solution of~$\vartheta_N$,
defined in~\eqref{eq:rn-Nth-primal}, of value close to it.

To do so, notice that if~$\epsilon$ is small enough,
then~\eqref{eq:g-big-estimate} implies in particular
that $\lambda > -1$, or else~$\lambda + g(L) < 0$, a
contradiction. Since our solution is optimal, we must also
have~$\lambda < 2$ (notice~$\lambda = 1$, $z = 0$, and~$y = 0$ is a
feasible solution of our problem).

Now problem~\eqref{eq:thetaN-truncated-dual} is a finite
linear program, and we can apply the strong duality
theorem. Its dual looks very much like problem~$\vartheta_N$, except
that the measure~$\alpha$ is now a discrete measure supported
on~$\Scal \cup \{0\}$ and there are two extra variables corresponding
to the constraints~$\lambda \geq -1$ and~$\lambda \leq 2$. Since our
optimal solution of~\eqref{eq:thetaN-truncated-dual} is such
that~$-1 < \lambda < 2$, complementary slackness implies that these
two extra variables of the dual of~\eqref{eq:thetaN-truncated-dual}
will be~0 in an optimal solution. So if~$\alpha$ is an optimal
solution of the dual of~\eqref{eq:thetaN-truncated-dual}, then it is
also a feasible (though likely not optimal) solution of~$\vartheta_N$.

We have then a solution of~$\vartheta_N$ of value~$\lambda$ and a
feasible solution of~\eqref{eq:thetaN-dual} of
value~$\lambda + \epsilon$. Making~$\epsilon$ approach~0 we obtain the
theorem.
\end{proof}

%=====================================================================

\subsection{Asymptotics for many distances}

The theorem below implies the~`$\leq$' direction of Bukh's
result~\eqref{eq:bukh}. The reverse inequality is much simpler to
prove; the reader is referred to Bukh's paper~\cite{Bukh2008}.

\begin{theorem}
If~$n \geq 2$ and~$m \geq 2$, then for every~$\epsilon > 0$ there
is~$q$ such that if~$d_1$, \dots,~$d_m$ are positive numbers such
that~$d_i / d_{i-1} > q$ for~$i = 2$, \dots,~$m$, then
\[
  \ualpha(G(\R^n, \{d_1, \ldots, d_m\})) \leq (\ualpha(G(\R^n, \{1\}))
  + \epsilon)^m + \epsilon(m - 1).
\]
\end{theorem}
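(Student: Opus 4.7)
The plan is to prove the theorem by induction on $m$, combining a near-optimal dual solution for the $(m-1)$-distance subproblem with a dual solution for the single largest distance $d_m$ via a multiplicative construction. The base case $m = 1$ is immediate. For the inductive step, I would first apply the induction hypothesis together with an analogue of Theorem~\ref{thm:rn-no-gap} for $(m-1)$ distances (whose proof follows the same discretize-then-fix scheme as Section~9.3) to obtain a dual-feasible function
\[
  \Phi_{m-1}(t) = \Lambda_{m-1} + \sum_{i<m} Z_i \Omega_n(d_i t) + \sum_k Y_k r_k(t)
\]
with $Y_k \leq 0$ and $\Lambda_{m-1} \leq (\ualpha(G(\R^n,\{1\}))+\epsilon)^{m-1}+\epsilon(m-2)$. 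By Theorems~\ref{thm:rn-theta-limit} and~\ref{thm:rn-no-gap}, I would also take a dual-feasible $\phi(t) = \lambda_0 + z_0\Omega_n(t) + \sum_j y_{0,j}\tilde r_j(t)$ for distance~1 with $\lambda_0 \leq \ualpha(G(\R^n,\{1\})) + \epsilon$; its rescaling $\phi_m(t) := \phi(d_m t)$ is then dual-feasible for distance $d_m$, the rescaled thick-constraint functions themselves being valid thick constraints at scale $d_m$.

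The key construction is the product $\Psi(t) := \Phi_{m-1}(t)\,\phi_m(t)$, which is nonnegative on $(0,\infty)$ and satisfies $\Psi(0)\geq 1$. For any feasible primal measure $\alpha$ of the $m$-distance problem (with $\alpha([0,\infty))=1$, $\int \Omega_n(d_i t)\,d\alpha = 0$ for each $i$, and satisfying the thick constraints appearing in $\Phi_{m-1}$ and $\phi_m$), one has $\alpha(\{0\}) \leq \int \Psi\, d\alpha$. Writing $Q_{m-1} := \Phi_{m-1}-\Lambda_{m-1}$ and $Q_m := \phi_m-\lambda_0$ and expanding
\[
  \Psi = \Lambda_{m-1}\lambda_0 + \Lambda_{m-1}Q_m + \lambda_0 Q_{m-1} + Q_{m-1}Q_m,
\]
both $\int Q_{m-1}\,d\alpha$ and $\int Q_m\,d\alpha$ are $\leq 0$: the $\Omega_n(d_i t)$ parts vanish by the primal equality constraints, while each thick-constraint part is a nonpositive coefficient times a nonnegative integral. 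Hence $\int \Psi\,d\alpha \leq \Lambda_{m-1}\lambda_0 + \int Q_{m-1}Q_m\,d\alpha$, reducing the problem to bounding the cross term.

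The hard part is showing $\int Q_{m-1}Q_m\,d\alpha \leq \epsilon$ once $d_m/d_{m-1}$ is large enough, and it is the main obstacle: the product $Q_{m-1}Q_m$ does not fit the linear dual form and cannot be absorbed into a reshuffled dual solution. Using $Q_{m-1}(0) \geq 1-\Lambda_{m-1} > 0$ together with $\int Q_m\,d\alpha \leq 0$ reduces the task to bounding $\int [Q_{m-1}(t)-Q_{m-1}(0)]\,Q_m(t)\,d\alpha(t)$. I would split the integration at a threshold $T$: for $t\leq T$, Lemma~\ref{lem:Omega-r-deriv} (together with the observation that the thick constraints in $\Phi_{m-1}$ may be taken to use sets of diameter of order $d_{m-1}$, adapted to the $(m-1)$-distance scales) gives $|Q_{m-1}(t)-Q_{m-1}(0)| \leq C\,d_{m-1}\,t$, controlling this piece by $C\,d_{m-1}T\cdot\sup|Q_m|$; for $t>T$, the decay of $\Omega_n$ at infinity for $n\geq 2$ and the analogous decay from Lemma~\ref{lem:r-properties} yield $|Q_m(t)|\leq \eta(d_m T)$ with $\eta\to 0$, bounding this piece by $\sup|Q_{m-1}|\cdot\eta(d_m T)$. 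Choosing $T$ with $d_{m-1}T\to 0$ and $d_m T\to\infty$ simultaneously is possible precisely because $d_m/d_{m-1}>q$ can be made arbitrarily large, so each piece is at most $\epsilon/2$. Combining with the induction hypothesis yields $\ualpha(G(\R^n,\{d_1,\ldots,d_m\})) \leq \Lambda_{m-1}\lambda_0 + \epsilon \leq (\ualpha(G(\R^n,\{1\}))+\epsilon)^m + \epsilon(m-1)$, using $\ualpha(G(\R^n,\{1\}))+\epsilon \leq 1$.
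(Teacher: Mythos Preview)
Your multiplicative approach is genuinely different from the paper's, and the obstacle lies exactly where you call it the ``hard part'': controlling the cross term $\int Q_{m-1}Q_m\,d\alpha$ uniformly in the distances.

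The estimate $|Q_{m-1}(t)-Q_{m-1}(0)|\leq C\,d_{m-1}\,t$ requires the derivative of $Q_{m-1}$ to be bounded by $C\,d_{m-1}$, and your sup-norm bounds on $Q_{m-1}$ and $Q_m$ likewise need uniform constants. But the induction hypothesis only bounds the \emph{value} $\Lambda_{m-1}$, and the multi-distance no-gap theorem you invoke delivers only the \emph{existence} of some near-optimal dual $\Phi_{m-1}$, with no control over its coefficients $Z_i,Y_k$ or over the diameters of the thick-constraint point sets it uses. Your parenthetical that those sets ``may be taken to use sets of diameter of order $d_{m-1}$'' is precisely the missing step: an abstract strong-duality argument gives no such scale structure, and even after rescaling so that $d_{m-1}=1$, the derivative and sup-norm of the resulting dual function could still blow up as the ratios $d_i/d_{m-1}\to 0$. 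To close this you would have to strengthen the induction to carry an \emph{explicit} dual solution with tracked derivative and sup bounds---which amounts to redoing the paper's construction inside your framework, at which point the product and the cross-term analysis become unnecessary detours.

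The paper avoids the cross term entirely by taking a \emph{linear} combination rather than a product. It fixes once and for all a single near-optimal dual solution $\lambda+g(t)$ for distance~$1$ (so $\lambda\leq\ualpha(G(\R^n,\{1\}))+\epsilon$ and $\lambda\leq 1$), chooses $L$ with $|g(t)|\leq\epsilon$ for $t\geq L$ and $\eta$ with $g(t)\geq 1-\lambda-\epsilon$ for $t\leq\eta$, and sets $q=L/\eta$. For $m=2$, scaling so $d_2=1$, the new dual function is simply
\[
\lambda^2+\epsilon+g(t)+\lambda\,g(d_1 t),
\]
which is checked to be feasible for the two-distance dual~\eqref{eq:rn-m-dist-dual} by splitting into the regions $t\in(0,L]$ (where $d_1 t<\eta$, so $g(d_1 t)\geq 1-\lambda-\epsilon$) and $t>L$ (where $|g(t)|\leq\epsilon$). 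The only function whose quantitative behaviour enters is the fixed $g$; the general $m$ follows by iterating with the same $g$ rescaled to each $d_i$. In effect the paper drops your cross term and compensates with a single additive $\epsilon$---which works because the resulting expression is already in dual form and can be verified directly, region by region.
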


\begin{proof}
All ideas required for the proof can be more clearly presented when
only two distances are considered; for larger values of~$m$ one only
has to use induction.

So fix~$\epsilon > 0$. Theorems~\ref{thm:rn-exactness}
and~\ref{thm:rn-theta-limit} imply that we can choose~$N$ such that
$\vartheta_N \leq \ualpha(G(\R^n, \{1\})) + \epsilon/2$ and
Theorem~\ref{thm:rn-no-gap} then says that we can take a feasible
solution~$\lambda$, $z$, and~$y$ of the dual~\eqref{eq:thetaN-dual}
of~$\vartheta_N$ satisfying
\[
\lambda \leq \vartheta_N + \epsilon/2 \leq \ualpha(G(\R^n, \{1\})) +
\epsilon.
\]
We may assume moreover that~$\lambda \leq 1$. Since~$\lambda$ is an
upper bound on the independence density of the unit-distance graph,
which is positive, by taking~$\epsilon$ small enough we assume
that~$\lambda \geq \epsilon$.

Write
\[
  g(t) = z\Omega_n(t) + \sum_{k=1}^N y_k r_k(t);
\]
note~$g$ is continuous. Since~$(\lambda, z, y)$ is feasible, we know
that~$g(0) \geq 1 - \lambda$ and~$g(t) \geq -\lambda$ for all~$t >
0$. Now~$\Omega_n$ vanishes at infinity for~$n \geq 2$, and together
with Lemma~\ref{lem:r-properties} this implies that~$g$ also vanishes
at infinity, so there is~$L > 0$ such that~$|g(t)| \leq \epsilon$ for
all~$t \geq L$. Since~$g$ is continuous at~$0$, we can pick~$\eta > 0$
such that~$g(t) \geq 1 - \lambda - \epsilon$ for
all~$t \in [0, \eta]$.

Set~$q = L / \eta$ and suppose~$d_1$, $d_2$ are distances
satisfying~$d_2 / d_1 > q$.  The independence density does not change
if we scale the forbidden distances, so we may assume that~$d_2 = 1$
and then~$d_1 < q^{-1}$. Consider the function~$h(t) = g(d_1 t)$.
Then~$\lambda^2 + \epsilon + g(t) + \lambda h(t)$ is
\begin{enumerate}
\item[(i)] at least~$1 + \epsilon$ if~$t = 0$;

\item[(ii)] at least~$\epsilon - \lambda\epsilon \geq 0$
  if~$t \in [0, L]$, since~$0 \leq \lambda \leq 1$ and
  $d_1 t < q^{-1} t = \eta t / L \leq \eta$;

\item[(iii)] at least~0 if~$t \geq L$, since~$\lambda \geq \epsilon$.
\end{enumerate}

Now notice
\[
  h(t) = z\Omega_n(d_1 t) + \sum_{k=1}^N y_k r_k(d_1 t),
\]
where from~\eqref{eq:r-def}
\[
  \begin{split}
    r_k(d_1 t) &= \sum_{x,y \in U_k} Z_k(x, y) \int_{B(x, \delta_k)} \int_{B(y,
      \delta_k)} \Omega_n(d_1 t \|x'-y'\|)\, dy' dx'\\
    &=\sum_{x,y \in U_k} Z_k(x, y) \int_{B(x, \delta_k)} \int_{B(y,
      \delta_k)} \Omega_n(t \|d_1 x'- d_1 y'\|)\, dy' dx'\\
    &=\sum_{x,y \in U_k} Z_k(x, y) \int_{d_1 B(x, \delta_k)}
    \int_{d_1 B(y, \delta_k)} \Omega_n(t \|x'- y'\|) d_1^{-2n}\, dy' dx'\\
    &=\sum_{x,y \in U_k} (d_1^{-2n} Z_k(x, y)) \int_{B(d_1 x, d_1 \delta_k)}
    \int_{B(d_1 y, d_1 \delta_k)} \Omega_n(t \|x'- y'\|)\, dy' dx'\\
    &=\sum_{x,y \in d_1 U_k} (d_1^{-2n} Z_k(x, y)) \int_{B(x, d_1 \delta_k)}
    \int_{B(y, d_1 \delta_k)} \Omega_n(t \|x'- y'\|)\, dy' dx'.
  \end{split}
\]
This shows that~$\tilde{r}_k(t) = r_k(d_1 t)$ also comes from a thick
constraint through~\eqref{eq:r-def}. Write now
$\Rcal = \{ r_1, \ldots, r_N, \tilde{r}_1, \ldots,
\tilde{r}_N\}$. Then from~(i)--(iii) we see that
\[
\begin{array}{l}
  \overline{\lambda} = \lambda^2 + \epsilon,\\
  \overline{z}_1 = \lambda z,\quad \overline{z}_2 = z,\\
  \overline{y}(r_k) = y_k\quad\text{for~$k = 1$, \dots,~$N$, and}\\
  \overline{y}(\tilde{r}_k) = \lambda y_k\quad\text{for~$k = 1$,
  \dots,~$N$}
\end{array}
\]
is a feasible solution of~\eqref{eq:rn-m-dist-dual} for
distances~$d_1$, $d_2$, whence
\[
  \ualpha(G(\R^n, \{d_1, d_2\})) \leq \overline{\lambda} = \lambda^2 +
  \epsilon \leq (\ualpha(G(\R^n, \{1\})) + \epsilon)^2 + \epsilon,
\]
as we wanted.
\end{proof}

%=====================================================================

\subsection{Computability of the independence density}
\label{sec:computability}

The sequence of dual problems of~\S\ref{sec:dual-sequence} can be used
to construct a Turing machine that computes the independence ratio of
the unit-distance graph up to any prescribed precision. Here is a
brief sketch of the idea.

First we describe a Turing machine that computes an increasing
sequence of lower bounds for the independence density that come
arbitrarily close to it.

Given~$T > 0$, let~$\Pcal_{T, N}$ be the partition of~$[-T, T)^n$
consisting of all half-open cubes~$C_1 \times \cdots \times C_n$
with
\[
  C_i \in \{\, [-T + 2kT / N, -T + 2(k+1)T/N) : \text{$k = 0$,
    \dots, $N-1$}\,\}.
\]
For each such partition let~$G_{T, N}$ be the graph whose vertex set
is~$\Pcal_{T, N}$ and in which two vertices~$X$, $Y$ are adjacent if
and only there are~$x \in X$ and~$y \in Y$ such that~$\|x-y\| =
1$. Given~$T$ and~$N$, the finite graph~$G_{T, N}$ can be computed by
a Turing machine.

By construction, if~$\Ical$ is an independent set of~$G_{T, N}$, then
the union~$I$ of all~$X$ in~$\Ical$ is an independent set of the
unit-distance graph with measure~$|I| \vol [0, 2T/N]^n$ and
\[
  \bigcup_{v \in (2T + 1)\Z^n} v + I
\]
is a periodic independent set of the unit-distance graph with density
\begin{equation}
\label{eq:turing-lower-bound}
  \frac{|I| \vol [0, 2T/N]^n}{\vol [-T - 1/2, T + 1/2]^n}.
\end{equation}
We know from~\S\ref{sec:periodic-sets} that periodic independent sets
can come arbitrarily close to the independence density. It is then not
hard to show that by taking larger and larger~$T$ and larger and
larger~$N$ one can by the above construction generate lower bounds for
the independence density that can come arbitrarily close to it.

So our Turing machine simply fixes an enumeration~$(T_1, N_1)$,
$(T_2, N_2)$, \dots\ of $(\N \setminus \{0\})^2$, computes the
independence number of~$G_{T_i, N_i}$ for all~$i$,
uses~\eqref{eq:turing-lower-bound} to get a lower bound, and outputs
at each step the best lower bound found so far.

Let us now see how to construct a Turing machine that computes a
decreasing sequence of upper bounds for the independence density that
come arbitrarily close to it.

The idea is to find at the $N$th step a feasible solution of the
dual~\eqref{eq:thetaN-dual} of~$\vartheta_N$ with value at
most~$\vartheta_N + 1 / N$. This we do by mimicking the proof of
Theorem~\ref{thm:rn-no-gap}: we disregard constraints for~$t \geq L$
for some large~$L$ and we discretize the interval~$[0, L]$. Following
the proof of the theorem, one sees that it is possible to estimate
algorithmically how large~$L$ has to be and how fine the
discretization has to be so we obtain a feasible solution of
value at most~$\vartheta_N + 1 / N$.

One problem now is that we have to work with rational numbers and not
real numbers. The Bessel function and all integrals involved have to
be approximated by rationals, which can be done to any desired
precision algorithmically. In the end, however, we are not solving the
original dual problem, but an approximated version of it. Why is the
solution of this approximated version close to the solution of the
original version, given, that is, that the approximation is good
enough?  Such a result, related to what is known in linear programming
as sensitivity analysis, follows from
Lemma~\ref{lem:rn-bounded-problem}: we work with problems of bounded
feasible region, so there is a universal upper bound on the magnitude
of any number appearing in any feasible solution, and it is possible
to show that if the input data approximates the real data well enough,
then the solutions will be very close together; moreover, it is
possible to estimate how good the approximation has to be.

Another problem is to see that the set~$\{r_1, r_2, \ldots\}$ can be
enumerated by a Turing machine. The only difficulty here is how to
enumerate the set~$\Tcal^*_{\aleph_0}(U)$ for some finite set~$U$. One
way to do it is as follows. First, note that $\Tcal^*(U)$ is a subset
of the $L^1$ unit ball in~$\R^{U \times U}$.  Given~$\epsilon > 0$,
consider a finite $\epsilon$-net~$\Ncal_\epsilon$ for this unit
ball. Let now~$\Ncal'_\epsilon$ be a finite set containing for
each~$A \in \Ncal_\epsilon$ a matrix~$B \in \Tcal^*(U)$
with~$\|B\|_1 \leq 1$ such that~$\|A-B\|_1 \leq \epsilon$, if it
exists. Then, since~$\Ncal_\epsilon$ is an $\epsilon$-net, for
every~$Z \in \Tcal^*(U)$ there is~$B \in \Ncal'_\epsilon$ such
that~$\|Z-B\|_1 \leq 2\epsilon$. So we may take
for~$\Tcal^*_{\aleph_0}(U)$ the union of~$\Ncal'_{1/k}$
for~$k \geq 1$.

It only remains to show how~$\Ncal'_\epsilon$ can be
computed. Given~$A \in \Ncal_\epsilon$, we want to solve the following
finite-dimensional optimization problem:
\[
  \optprob{\text{minimize}&\|A-B\|_1\\
    &\|B\|_1 \leq 1,\\
    &B \in \Ccal^*(U).
  }
\]
The $L_1$ norms above can be equivalently rewritten using linear
constraints, so the above problem is a conic-programming problem that
can be solved with the ellipsoid method (the separation problem is
NP-hard, as follows from the equivalence between separation and
optimization~\cite{GrotschelLS1988}, but in this case we do not care
for efficiency: it is enough to have a separation algorithm for the
copositive cone, and we do~\cite{Gaddum1958}). By solving this problem
repeatedly one can construct~$\Ncal'_\epsilon$.

So we have two Turing machines, one to find better and better lower
bounds, and one to find better and better upper bounds. Running the
two alternately, one constructs a third Turing machine that
given~$\epsilon > 0$ stops when the best lower bound is
$\epsilon$-close to the best upper bound found.
\vfill

%%%%%%%%%%%%%%%%%%%%%%%%%%%%%%%%%%%%%%%%%%%%%%%%%%%%%%%%%%%%%%%%%%%%%%

\section{Acknowledgments}

We would like to thank Etienne de Klerk for pointing us to
references~\cite{KlerkP2007, MotzkinS1965} and Stefan Krupp, Markus
Kunze, and Fabrício Caluza Machado for reading an early version of the
manuscript and providing useful comments. We are also grateful to the
anonymous referees who read the paper carefully and made many useful
suggestions and corrections.

This project has received funding from the European Union's Horizon
2020 research and innovation programme under the Marie
Sk\l{}odowska-Curie agreement number~764759.

%%%%%%%%%%%%%%%%%%%%%%%%%%%%%%%%%%%%%%%%%%%%%%%%%%%%%%%%%%%%%%%%%%%%%%

\end{document}